\newcommand{\thesisdedication}{To my mother.}
\renewcommand{\emptyset}{\ensuremath{\varnothing}}
\newcommand{\Eqref}[1]{\eqref{#1}}
\newcommand{\Figref}[1]{Fig.~\ref{#1}}
\newcommand{\Secref}[1]{Section~\ref{#1}}
\newcommand{\Chapref}[1]{Chapter~\ref{#1}}
\newcommand{\Thmref}[1]{Theorem~\ref{#1}}
\newcommand{\Corref}[1]{Corollary~\ref{#1}}
\newcommand{\Lemref}[1]{Lemma~\ref{#1}}
\newcommand{\Propref}[1]{Prop.~\ref{#1}}
\newcommand{\Defref}[1]{Definition~\ref{#1}}
\newcommand{\Remref}[1]{Remark~\ref{#1}}
\newcommand{\Claimref}[1]{Claim~\ref{#1}}
\newcommand{\R}{\mathbb{R}} 
\newcommand{\Z}{\mathbb{Z}} 
\newcommand{\Q}{\mathbb{Q}} 
\newcommand{\Prob}{\mathbb{P}} 
\newcommand{\forevery}{\forall} 
\newcommand{\e}{\epsilon}
\newcommand{\w}{\omega}
\newcommand{\W}{\Omega}
\newcommand{\Norm}[2]{\left\lVert{#1}\right\rVert_{#2}} 
\newcommand{\tr}{\textrm{tr}} 
\newcommand{\Leb}{\textrm{Leb}} 
\DeclareMathOperator*{\esssup}{ess\,sup} 
\DeclareMathOperator*{\essinf}{ess\,inf} 
\newcommand{\AND}{\textrm{ and }} 
\newcommand{\OR}{\textrm{ or }}
\newcommand{\almostsurely}{\textrm{a.s}}
\newcommand{\sigmaalgebra}{$\sigma$-algebra}
\theoremstyle{plain}
\newtheorem{theorem}{Theorem}[chapter]
\newtheorem*{theorem*}{Theorem}
\newtheorem{lemma}[theorem]{Lemma}
\newtheorem*{lemma*}{Lemma}
\newtheorem{cor}[theorem]{Corollary}
\newtheorem{prop}[theorem]{Proposition}
\newtheorem*{prop*}{Proposition}
\theoremstyle{definition}
\newtheorem{define}[theorem]{Definition}
\newtheorem*{example*}{Example}
\newtheorem{claim}[theorem]{Claim}
\theoremstyle{definition}
\newtheorem{remark}[theorem]{Remark}
\newtheorem*{remark*}{Remark}
\def\@wraptoccontribs#1#2{}
\newcommand{\thesistitle}{Variational formula for the time-constant of first-passage percolation}
\newcommand{\thesisauthor}{Arjun Krishnan}
\newcommand{\thesisadvisor}{Professor S.R.S Varadhan}
\newcommand{\thesiscoadvisor}{Professor Sourav Chatterjee}
\newcommand{\graddate}{May 2014}
\newcommand{\FPP}{first-passage percolation}
\newcommand{\FPT}{first-passage time}
\newcommand{\deltaapprox}{$\delta$-approximation}
\begin{document}

\pagenumbering{roman}
%
\thispagestyle{empty}
\begin{center}
  {\large\textbf{\thesistitle}}
  \vspace{.7in}

  by
  \vspace{.7in}

  \thesisauthor
  \vfill

\begin{doublespace}
  A dissertation submitted in partial fulfillment\\
  of the requirements for the degree of\\
  Doctor of Philosophy\\
  Department of Mathematics\\
  New York University\\
  \graddate
\end{doublespace}
\end{center}
\vfill

\noindent\makebox[\textwidth]{\hfill\makebox[2.5in]{\hrulefill}}\\
\makebox[\textwidth]{\hfill\makebox[2.5in]{\hfill\thesiscoadvisor\hfill}}\\
\vspace{20pt}\\
\noindent\makebox[\textwidth]{\hfill\makebox[2.5in]{\hrulefill}}\\
\makebox[\textwidth]{\hfill\makebox[2.5in]{\hfill\thesisadvisor\hfill}}
\newpage
\thispagestyle{empty}
\begin{center}
  \copyright ~Arjun Krishnan. \\
  All rights reserved, 2014.
\end{center}
\newpage
\thispagestyle{empty}
\vspace*{0in}
\newpage

\section*{Dedication}\addcontentsline{toc}{section}{Dedication}
\vspace*{\fill}
\begin{center}
  \thesisdedication
\end{center}
\vfill
\newpage
\section*{Acknowledgements}\addcontentsline{toc}{section}{Acknowledgements}
%
I would first like to acknowledge the support I received from my advisors, Sourav Chatterjee and Raghu Varadhan. Sourav Chatterjee introduced me to first-passage percolation in the 2nd year of my PhD. He supported my work even when the ideas were extremely vague and nebulous, and has been very encouraging throughout. I received enormous amounts of help from Raghu Varadhan. He found several gaps in my arguments, made uncountably many helpful suggestions, and mentored me from the beginning of my PhD. 

Second, I'd like to acknowledge Bob Kohn's encouragement and input throughout my PhD. If he hadn't reassured me quite early on that my sketchy PDE based intuition about the problem might actually work, I wouldn't have pursued it. 

Third, I'd like to thank my friends Matan Harel and Behzad Mehrdad both for their valuable mathematical input, and for being a part of my PhDs Anonymous support group.

My family has been very supportive throughout my PhD. Special thanks to my aunt and uncle Harini and Sri, my aunt Jayashree, my cousin Mira, and my uncles Srinivasan and Venugopal. 

Last but not least, I deeply appreciate the love and support from my girlfriend Shirley Zhao. Her inexplicable tolerance for my particular brand of weirdness has been a great source of strength.

\newpage
\section*{Abstract}\addcontentsline{toc}{section}{Abstract}
%
We consider first-passage percolation with positive, stationary-ergodic weights on the square lattice $\Z^d$. Let $T(x)$ be the first-passage time from the origin to a point $x$ in $\Z^d$. The convergence of the scaled first-passage time $T([nx])/n$ to the time-constant as $n$ tends to infinity can be viewed as a problem of homogenization for a discrete Hamilton-Jacobi-Bellman (HJB) equation. By borrowing several tools from the continuum theory of stochastic homogenization for HJB equations, we derive an exact variational formula for the time-constant. We then construct an explicit iteration that produces the minimizer of the variational formula (under a symmetry assumption), thereby computing the time-constant. The variational formula may also be seen as a duality principle, and we discuss some aspects of this duality.

\newpage
\tableofcontents

\listoffigures\addcontentsline{toc}{section}{List of Figures}
\newpage


\pagenumbering{arabic} 
\part{Homogenization theorem and variational formula\label{part:one}}%
\chapter{Introduction}
\label{sec:introduction}
\section{Overview}
\label{sec:intro-fpp}
First-passage percolation is a growth model in a random medium introduced by~\citet{hammersley_first-passage_1965}. Consider the nearest-neighbor directed graph on the cubic lattice $\Z^d$. We will define the model when the random medium consists of positive edge-weights attached to the edges of this graph. For the purposes of this paper, \FPP~is better thought of as an optimal-control problem. Define the set of \emph{control directions}
   \begin{equation}
     A := \{\pm e_1,\ldots,\pm e_d\},
     \label{eq:control-directions}
   \end{equation}
   where $e_i$ are the canonical unit basis vectors for the lattice $\Z^d$. Let $(\W,\mathcal{F},\Prob)$ be a probability space\footnote{We will frequently drop reference to the probability space when it plays no role in the argument.}. The weights will be given by a function $\tau\colon \Z^d \times A \times \W \to \R$, where $\tau(x,\alpha,\w)$ refers to the weight on the edge from $x$ to $x+\alpha$. We assume that the function $\tau(x,\alpha,\w)$ is stationary-ergodic (see~\Secref{sec:results-from-stochastic-homogenization}) under translation by $\Z^d$. 
   
  A path connecting $x$ to $y$ is a finite ordered set of nearest-neighbor vertices:
\begin{equation}
  \gamma_{x,y} = \{x=v_1,\ldots,v_n=y\}.
  \label{eq:generic-path-gamma-as-set-of-vertices}
\end{equation}
The weight or total time of the path is  
\[ 
  \mathcal{W}(\gamma_{x,y}) := \sum_{i=1}^{n-1} \tau(v_i,v_{i+1}-v_i,\w).
\]
The first-passage time from $x$ to $y$ is an infimum of the total time of the path taken over all paths from $x$ to $y$: 
\begin{equation}
  \mathcal{T}(x,y) := \inf_{\gamma_{x,y}} \mathcal{W}(\gamma_{x,y}).
  \label{eq:definition-of-first-passage-time}
\end{equation}
We will use $\mathcal{T}(x)$ to mean $\mathcal{T}(x,0)$ unless otherwise specified. We're interested in the first-order asymptotics of $\mathcal{T}(x)$ as $|x| \to \infty$.

For any $x \in \R^d$, define the scaled~\FPT
\begin{equation}
  \mathcal{T}_n(x) := \frac{\mathcal{T}([nx])}{n}, 
  \label{eq:n-scaling-first-passage-time}
\end{equation}
where $[nx]$ represents the closest lattice point to $nx$ (with some uniform way to break ties).
The law of large numbers for $\mathcal{T}(x)$ has been the subject of a lot of research over the last $50$ years and involves the existence of the so-called \emph{time-constant} $m(x)$ given by
\begin{equation}
  m(x) := \lim_{n \to \infty} \mathcal{T}_n(x).
\label{eq:time-constant}
\end{equation}
The limit certainly exists in $d=1$, since it is simply the usual law of large numbers. For general $d \geq 1$, Kingman's classical subadditive ergodic theorem~\citep{kingman_ergodic_1968} along with some simple estimates is enough to show the existence of $m(x)$ for all $x \in \R^d$. However, the theorem is merely an existence theorem; i.e., it does not give us any quantitative information about the limit, unlike the usual law of large numbers. Proving something substantial about the time-constant has been an open problem for the last several decades. 

We prove that the time-constant satisfies a Hamilton-Jacobi-Bellmann (HJB) partial differential equation (PDE), and thus derive a new variational formula for the time-constant in part I. In part II, we first present a new explicit algorithm to produce a minimizer of the formula. Then, we discuss some aspects of the formula as a duality principle.
 
\section{First-passage percolation as a homogenization problem}
\label{sec:fpp-as-homo-problem}
Since the \FPT~$\mathcal{T}(x)$ is an optimal-control problem (see~\Chapref{chap:preliminaries}), it has a dynamic programming principle (DPP) which says that
  \[ \mathcal{T}(x) = \inf_{\alpha \in A} \{ \mathcal{T}(x+\alpha) + \tau(x,\alpha) \}. \]
  We can rewrite the DPP as a difference equation in the so-called \emph{metric} form of the HJB equation. Assuming $\tau(x,\alpha)$ are positive, 
\begin{equation}
  \sup_{\alpha \in A} \left\{ -\frac{(\mathcal{T}(x+\alpha) - \mathcal{T}(x))}{\tau(x,\alpha)} \right\} = 1. 
  \label{eq:discrete-HJB}
\end{equation}
Let's imagine that we were somehow able to extend $\mathcal{T}(x)$ as a smooth function on $\R^d$. Taylor expand $\mathcal{T}(x)$ at $[nx]$ to get
\begin{equation}
  \sup_{\alpha \in A} \left\{ -\frac{D\mathcal{T}([nx])\cdot \alpha + 1/2 (\alpha,D^2\mathcal{T}(\xi)\alpha)}{\tau([nx],\alpha)} \right\} = 1, 
  \label{eq:discrete-HJB-manipulation-into-homo-problem}
\end{equation}
where $(\cdot,\cdot)$ is the usual inner product on $\R^d$, and $\xi$ is a point in $\R^d$. Introduce the scaled \FPT~$\mathcal{T}_n(x)$ into~\Eqref{eq:discrete-HJB-manipulation-into-homo-problem} to get
\begin{equation}
  \sup_{\alpha \in A} \left\{ -\frac{D\mathcal{T}_n(x)\cdot \alpha}{\tau([nx],\alpha)} \right\} + O(n^{-1})= 1. 
  \label{eq:discrete-stochastic-homo-problem-from-intro}
\end{equation}
Equation~\eqref{eq:discrete-stochastic-homo-problem-from-intro} is reminiscent of a stochastic homogenization problem for a metric HJB equation in $\R^d$.   

By considering the lattice to be embedded in $\R^d$, we can view the path $\gamma_{x,y}$ in~\Eqref{eq:generic-path-gamma-as-set-of-vertices} as a continuous curve moving along the edges of the lattice from $x$ to $y$. Let $g_{x,y}(s)$ be a parametrization of this path satisfying
\[ 
  g_{x,y}'(s) = \frac{1}{\tau(z,\alpha)} \alpha,
\]
when $g_{x,y}(s) = z + \lambda \alpha$ for $z \in \Z^d,~0 < \lambda < 1, \AND \alpha \in A$. It's clear that if $g_{x,y}(0) = x$, 
\[ 
  g_{x,y} \left( \sum_{i=1}^{n-1} \tau(v_i,v_{i+1})\right) = y. 
\]

Motivated by this interpretation, we can formulate a continuous version of \FPP~in $\R^d$. Let $t\colon\R^d \times A \to \R$ be a Lipschitz function in $\R^d$ (uniformly in $A$) satisfying for some constants $a, b > 0$
\[
  a \leq t(x,\alpha) \leq b.
\]
Let the set of allowable paths be 
\[ 
  \mathcal{A} := \left\{ g \in C^{0,1}([0,\infty),\R^d) :
    g'(s) = \frac{\alpha}{t(g(s),\alpha)}~a.e.~s \in [0,\infty), ~\alpha \in A \right\}.
\]
Define the continuous version of the~\FPT~as
\begin{equation}
  T(x) := \inf_{g \in \mathcal{A}} \{s : g(0) = x,~ g(s)=0\}.
  \label{eq:continuousFPT-definition}
\end{equation}
Define the Hamiltonian for this continuous \FPP~to be
\begin{equation}
  H(p,x) := \sup_{\alpha \in A} \frac{p\cdot \alpha}{t(x,\alpha)}.
  \label{eq:H-for-general-continuous-FPP}
\end{equation} 

It's a classical fact in optimal-control theory that $T(x)$ is the (unique) viscosity solution of the metric HJB equation~\citep{bardi_optimal_1997} 
\begin{equation}
  \begin{split}
    H(DT(x),x) & = 1, \\
    T(0)  & = 0.
  \end{split}
  \label{eq:continuousFPT-HJB-eqn-for}
\end{equation}
Let $T_n(x) = T(nx)/n$ be the scaled continuous \FPT. For each $n$, $T_n(x)$ solves 
\begin{equation}
  \begin{split}
    H(DT_n(x),nx) & = 1,\\
    T_n(0) & = 0.
  \end{split}
  \label{eq:general-homogenization-problem-description}
\end{equation}
The set of equations in~\Eqref{eq:general-homogenization-problem-description} constitute a homogenization problem for the Hamiltonian in~\Eqref{eq:H-for-general-continuous-FPP}. The theory of stochastic homogenization states that $T_n(x) \to m(x)$ locally uniformly, and further, that there is a deterministic Hamiltonian $\overline{H}(p)$ such that $m(x)$ is the viscosity solution of
\begin{equation}
  \begin{split}
    \overline{H}(Dm) & = 1,\\
    m(0) & = 0.
  \end{split}
  \label{eq:limiting-HJB}
\end{equation}
Importantly, one can characterize $\overline{H}(p)$ using a variational formula. 

We will first prove that the time-constant of discrete \FPP~satisfies a HJB equation of the form~\Eqref{eq:limiting-HJB}. Proving that a continuous, but possibly non-smooth function like the time-constant is a solution of a HJB equation is most easily done using viscosity solution theory~\citep{crandall_users_1992}. However, this is a continuum theory, and \FPP~is on the lattice. Constructing a continuous version of \FPP~allows us to embed the discrete problem in $\R^d$, and borrow the tools we need from the continuum theory. 

\section{Stochastic homogenization on $\R^d$}
\label{sec:results-from-stochastic-homogenization}
Fairly general stochastic homogenization theorems about HJB equations have been proved in recent years. We will state a special case of the theorem from~\citet{lions_homogenization_2005} that is relevant to our problem, although the later paper by~\citet{armstrong_stochastic_2012} would have been just as appropriate. 

For a group $G$, let 
\begin{equation}
  \mathcal{G} := \{V_{g}:\Omega \to \Omega\}_{g \in G} 
  \label{eq:translation-group-of-operators}
\end{equation}
be a family of invertible measure-preserving maps satisfying
\[ V_{gh} = V_g \circ V_h \quad \forall~g,h \in G. \]
That is, $V_{\cdot}$ is a homomorphism from $G$ to the group of all measure-preserving transformations on $(\W,\mathcal{F},\Prob)$. In our case, $G$ will either be $\Z^d \OR \R^d$. Let $X = \R^d \OR \Z^d$, and suppose $G \subset X$. A random function $f\colon X \times \Omega \to \R$ is said to be stationary with respect to $G$ if it satisfies
\begin{equation}
  f(x + g,\omega) = f(x,V_g\omega) \quad \forall~ x \in X,~g \in G.
  \label{eq:stationary-process-definition}
\end{equation}

We say $B \in \mathcal{F}$ is an invariant set if it satisfies $V_g B = B$ for any $g \in G\backslash \{e\}$ where $e$ is the identity element of $G$. The family of maps $\mathcal{G}$ is called (strongly) ergodic if invariant sets are either null or have full measure. A process $f(x,\w)$ is called stationary-ergodic if it's stationary with respect to a group $G$, and $\mathcal{G}$ is ergodic.
%
%

Let $G=\R^d$, and suppose the Hamiltonian $H\colon\R^d \times \R^d \times \W \to \R$ is
\begin{enumerate}
  \item stationary-ergodic,
  \item convex in $p$ for each $x$ and $\w$,
  \item coercive in $p$; i.e., uniformly in $x$ and $\w$, 
    \[ \lim_{|p| \to \infty} H(p,x,\w)  = +\infty, \]
  \item and regular; i.e., for each $\w$,
 \[ 
   H(\cdot,\cdot,\w) \in C^{0,1}_{loc}(\R^d \times \R^d) \cap C^{0,1}(\overline{B(0,R)} \times \R^d).
 \]    
\end{enumerate}
Consider the homogenization problem in~\Eqref{eq:general-homogenization-problem-description} for $T_n(x)$. The following is a special case of Theorem $3.1$,~\citet{lions_homogenization_2005}: 
\begin{theorem}
  There exists a deterministic, convex, Lipschitz $\overline{H}(p)$ with viscosity solution $m(x)$ of ~\Eqref{eq:limiting-HJB}, such that $T_{n}(x) \to m(x)$ locally uniformly in $\R^d$.
  \label{thm:lions-homogenization-theorem}
\end{theorem}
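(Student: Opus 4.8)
The plan is to follow the standard route for stochastic homogenization of convex Hamilton--Jacobi equations due to \lions: build the effective Hamiltonian $\overline{H}$ as an ergodic limit of approximate correctors, transfer the convergence $T_n \to m$ to the level of viscosity solutions by a perturbed test function argument, and close the loop with a comparison principle for the limiting equation~\Eqref{eq:limiting-HJB}.

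\emph{Construction of $\overline{H}$.} For fixed $p \in \R^d$ and $\delta > 0$, let $v^\delta = v^\delta(\cdot,\w;p)$ be the unique bounded viscosity solution of the \deltaapprox\ (or ``cell'') problem
\[ \delta\, v^\delta(x,\w) + H(p + Dv^\delta(x,\w),\, x,\, \w) = 0, \qquad x \in \R^d . \]
Coercivity and the local Lipschitz regularity of $H$ give bounds $\Norm{\delta v^\delta}{\infty} \le C$ and $\Norm{Dv^\delta}{\infty} \le C$ uniform in $\delta$, $x$ and $\w$, so $x \mapsto v^\delta(x,\w) - v^\delta(0,\w)$ is equi-Lipschitz. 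Stationarity of $H$ forces $v^\delta(x+g,\w) = v^\delta(x,V_g\w)$, so $g \mapsto \delta v^\delta(g,\w)$ is a stationary field; the \SET\ applied to a subadditive functional attached to the cell problem, combined with the sublinearity $n^{-1} v^\delta(nx,\w) \to 0$ (a consequence of the gradient bound and the ergodic theorem), shows that $-\delta v^\delta(0,\w) \to \overline{H}(p)$ as $\delta \downarrow 0$ for a.e.\ $\w$, with the limit \emph{deterministic} --- this is where ergodicity, rather than mere stationarity, is used. Convexity of $H$ in $p$ passes to $\overline{H}$ via the optimal-control representation of $v^\delta$, while the coercivity and the global Lipschitz bound of $H$ on $\closure{B(0,R)} \times \R^d$ descend to $\overline{H}$; hence $\overline{H}$ is deterministic, convex, coercive and Lipschitz. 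The convergence holds a priori on a $p$-dependent full-measure event, but using the uniform Lipschitz dependence on $p$ one pins the limit down first on a countable dense set and then, by equicontinuity in $p$, on a single full-measure event $\W_0$ good for all $p \in \R^d$ simultaneously. For the metric problem~\Eqref{eq:general-homogenization-problem-description} one may equivalently use the first-passage-type quantity $m_\mu(0,nx,\w)$, which is directly subadditive in its spatial arguments, and recover $\overline{H}$ through the duality $\overline{m}_\mu(x) = \sup\{\, p\cdot x : \overline{H}(p) \le \mu \,\}$.

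\emph{Homogenization via perturbed test functions.} Fix $\w \in \W_0$. The two-sided bound $a \le t \le b$ (equivalently, boundedness and coercivity of $H$) makes $\{T_n\}$ locally equi-Lipschitz and locally bounded, so its upper and lower half-relaxed limits $\overline{T}$ and $\underline{T}$ are finite. One checks that $\overline{T}$ is a viscosity subsolution and $\underline{T}$ a supersolution of $\overline{H}(Dm) = 1$: if a smooth $\phi$ touches $\overline{T}$ from above at $x_0$, set $p_0 = D\phi(x_0)$ and use the perturbed test function $x \mapsto \phi(x) + n^{-1} v^\delta(nx,\w;p_0)$; sublinearity makes the correction $o(1)$, so it still touches $T_n$ from above near $x_0$ for $n$ large, and inserting it into~\Eqref{eq:general-homogenization-problem-description} together with the equation solved by $v^\delta$ yields, after $n \to \infty$ and then $\delta \downarrow 0$, the inequality $\overline{H}(D\phi(x_0)) \le 1$. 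The supersolution property is symmetric, and $T_n(0) = 0$ gives $\overline{T}(0) = \underline{T}(0) = 0$. Since $\overline{H}(Dm) = 1$, $m(0) = 0$ is a metric HJB equation with convex coercive Hamiltonian, it enjoys a comparison principle for the associated Dirichlet problem, which forces $\overline{T} \le m \le \underline{T}$; as $\underline{T} \le \overline{T}$ always, we get $\overline{T} = \underline{T} = m$, i.e.\ $T_n \to m$ locally uniformly, and the same comparison principle gives uniqueness of $m$.

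\emph{Main obstacle.} The technical heart is the construction step: showing that the rescaled correctors (or the subadditive metric quantities) converge to a \emph{deterministic} limit, with enough uniformity that $n^{-1} v^\delta(n\,\cdot\,,\w) \to 0$ \emph{locally uniformly} in space --- exactly what the perturbed test function argument consumes --- and with Lipschitz control in $\delta$ and $p$ so that the limit is a single deterministic continuous function $\overline{H}$ on one full-measure event compatible with the viscosity machinery. Everything else --- the comparison principle for~\Eqref{eq:limiting-HJB}, the convexity and coercivity bookkeeping for $\overline{H}$, and the perturbed test function computation itself --- is comparatively routine.
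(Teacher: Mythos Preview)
Your sketch is broadly correct and follows the Lions--Souganidis framework, but note that the paper does not prove this theorem at all: it is quoted as a special case of Theorem~3.1 in \citet{lions_homogenization_2005}, and the appendix only sketches how their argument adapts when the translation group is $\Z^d$ rather than $\R^d$ (which is~\Propref{prop:homogenization-theorem-for-discrete-translation-group}, not this theorem).

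Comparing your outline with the Lions--Souganidis argument as the appendix recounts it: you lean on the perturbed test function method with $v^\delta$ as approximate corrector, whereas Lions--Souganidis build the limit through the optimal-control representation. They introduce a value function $L(y,y';s,t)$ of an auxiliary stochastic control problem, which is genuinely subadditive in the space--time variables, apply the subadditive ergodic theorem to $L^\e$ directly, and then push this through to the solution $U^\e$ via the representation~\Eqref{eq:supersolution-representation-of-U-epsilon}. The Abelian--Tauberian relation then transfers the ergodic limit from the finite-horizon problem to $\e v_\e$. Your line ``the \SET\ applied to a subadditive functional attached to the cell problem'' is the weak point: $v^\delta$ itself is not subadditive, and you have not said what the functional is. Your parenthetical alternative --- using the first-passage quantity $m_\mu(0,nx,\w)$, which \emph{is} subadditive --- is exactly the route Lions--Souganidis take via $L$, and is where the real work lies. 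The perturbed test function computation you describe is correct but is downstream of this; the paper explicitly flags (citing \citet{lions_correctors_2003}) that sublinear correctors need not exist, so the approximate-corrector machinery cannot be shortcut.

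One small terminological slip: in this paper ``\deltaapprox'' is reserved for the continuum penalization of the discrete problem (\Secref{sec:taking-the-discrete-problem-into-the-continuum}), not for the cell problem $\delta v^\delta + H = 0$.
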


There is also a variational characterization of $\overline{H}(p)$. Define the set of functions with stationary and mean-zero gradients:
\begin{equation}
  S:= \left\{ 
  f(\cdot,\w) \in C^{0,1}(\R^d) ~\left|~ 
  \begin{split}  
      & Df(x + z,\w) = Df(x,V_z\w), ~\forall x,z \in \R^d \\
      & E[Df(x,\w)] = 0 ~\forall x \in \R^d 
    \end{split} \right. \right\} .
  \label{eq:lions-S-set-definition-translation-group-R}
\end{equation}
Proposition $3.2$ from~\citet{lions_homogenization_2005} states that 
\begin{prop}
  For each $p \in \R^d$, 
  \begin{equation}
    \overline{H}(p) = \inf_{f \in S} \esssup_{\w} \sup_{x \in \R^d} H(Df + p,x,\w).
    \label{eq:variational-characterization-of-H-bar-discrete-translation-group}
  \end{equation}
  \label{prop:lions-variational-characterization-of-h-bar}
\end{prop}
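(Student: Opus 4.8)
The plan is to prove Proposition~\ref{prop:lions-variational-characterization-of-h-bar} by establishing two inequalities: that $\overline{H}(p)$ is bounded above by the infimum in~\eqref{eq:variational-characterization-of-H-bar-discrete-translation-group}, and that it is bounded below by it. Throughout, I would fix $p \in \R^d$ and work with the corrector-type auxiliary problem, following the strategy of \citet{lions_homogenization_2005}. The key object is the approximate cell problem (or ``$\delta$-corrector''): for $\delta > 0$, let $v^\delta = v^\delta(x,\w)$ be the unique bounded stationary viscosity solution of
\begin{equation}
  \delta v^\delta(x,\w) + H(Dv^\delta(x,\w) + p, x, \w) = 0.
  \label{eq:delta-cell-problem-plan}
\end{equation}
Existence and uniqueness of such a solution, together with the bounds $\|\delta v^\delta\|_\infty \le C$ and a uniform Lipschitz bound on $v^\delta$ (from coercivity and regularity of $H$), are standard and I would cite them. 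The homogenization theory then gives that $-\delta v^\delta(x,\w) \to \overline{H}(p)$ as $\delta \to 0$, in an appropriate (e.g. $L^1_{loc}$ or almost-everywhere after averaging) sense; this is essentially the content of how $\overline{H}$ is constructed in \Thmref{thm:lions-homogenization-theorem}.

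For the upper bound $\overline{H}(p) \le \inf_{f\in S}\esssup_\w \sup_x H(Df+p,x,\w)$, I would take any $f \in S$ and any level $\lambda > \esssup_\w \sup_x H(Df+p,x,\w)$, so that $\lambda - H(Df + p,x,\w) \ge 0$ for a.e.\ $\w$ and all $x$. Using $f$ as a smooth (stationary-gradient) subsolution, one checks via the comparison principle for~\eqref{eq:delta-cell-problem-plan} that $\delta v^\delta \le \delta f + \text{(small)}$ or, more precisely, that $-\delta v^\delta \le \lambda + o(1)$ uniformly; passing $\delta \to 0$ yields $\overline{H}(p) \le \lambda$, and then taking the infimum over $\lambda$ and over $f \in S$ gives the desired inequality. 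The monotonicity/comparison argument here is the routine direction once the right test function is in hand.

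For the lower bound, the natural candidate minimizer is the corrector itself: I would like to produce, from the family $\{v^\delta\}$, a function $f \in S$ with $\esssup_\w \sup_x H(Df + p,x,\w) \le \overline{H}(p)$. The idea is to pass to a limit $v^\delta \to f$ along a subsequence; the uniform Lipschitz bounds give local uniform convergence of (a spatial average of) the $v^\delta$, and the stationarity of the gradients is preserved in the limit, while $E[Dv^\delta] = 0$ (which follows by differentiating stationarity and using that $v^\delta$ has sublinear-in-fact-bounded growth) passes to $E[Df]=0$, so $f \in S$. Then from~\eqref{eq:delta-cell-problem-plan}, $H(Dv^\delta + p,x,\w) = -\delta v^\delta(x,\w) \to \overline{H}(p)$, and lower semicontinuity of $H$ under the limit (using convexity of $H$ in $p$, so that $H(Df+p,\cdot,\cdot) \le \liminf H(Dv^\delta + p,\cdot,\cdot)$ weakly) yields $H(Df+p,x,\w) \le \overline{H}(p)$ for a.e.\ $\w$ and a.e.\ $x$; by the Lipschitz regularity this can be upgraded to all $x$, giving $\esssup_\w\sup_x H(Df+p,x,\w) \le \overline{H}(p)$.

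The main obstacle is this last step: the corrector $v^\delta$ need not converge as $\delta \to 0$ (correctors may fail to exist in stochastic homogenization), so one cannot simply extract $f$ as a limit. The genuine argument, as in \citet{lions_homogenization_2005}, is more delicate --- one works with $\eps$-approximate correctors or exploits convexity to build an ``almost minimizer'' $f$ for each prescribed error, and then shows the infimum is attained in the limit rather than by a single $f$. Handling the $\esssup$ over $\w$ correctly (as opposed to an expectation or an almost-sure statement) and transferring pointwise-in-$x$ bounds through the non-smooth limit via the uniform Lipschitz estimates is where the technical care concentrates; I would lean on the convexity and coercivity hypotheses precisely here, and otherwise defer to the cited proof for the construction of the near-optimal $f \in S$.
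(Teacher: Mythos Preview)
Your overall architecture matches the paper's (and Lions--Souganidis'): the upper bound $\overline{H}(p)\le\inf(\cdots)$ via the comparison principle, and the lower bound by extracting a near-minimizer from the approximate correctors $v^\delta$ (the paper writes $v_\e$) solving $\e v_\e + H(p+Dv_\e,x)=0$, normalized to $\hat v_\e = v_\e - v_\e(0)$.

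The gap is in your lower-bound limit. You propose to use ``lower semicontinuity of $H$ under the limit (using convexity of $H$ in $p$)'' to pass from $H(Dv^\delta+p,\cdot,\cdot)\to\overline{H}(p)$ to $H(Df+p,\cdot,\cdot)\le\overline{H}(p)$. Convexity plus weak (or weak-$*$) convergence of gradients only gives you an inequality after integrating against test functions or taking expectations; it does \emph{not} yield the pointwise-in-$x$, $\esssup$-in-$\w$ bound that the formula demands. Local uniform convergence of $v^\delta$ from the Lipschitz estimate controls $v^\delta$, not $Dv^\delta$, so you cannot upgrade to a pointwise statement this way.

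The paper resolves this differently. First, it exploits the optimal-control representation $H(q,x,\w)=\sup_{a\in A}\{-f(x,a,\w)\cdot q - l(x,a,\w)\}$ to replace the convex $H$ by a family of inequalities that are \emph{linear} in the gradient: for each fixed $a\in A$, $\e\hat v_\e - f\cdot(p+D\hat v_\e) - l \le -\e v_\e(0)$. Second, it mollifies $\hat v_\e$ by $\eta_r$ to get $\bar v_\e$ with a uniform $C^1$ bound and a ($r$-dependent) $C^2$ bound; this puts the family in a compact set of $C^1([0,1]^d)$ by Arzel\`a--Ascoli, so the laws are tight and one can pass to a weak (vague) limit $u_r$ on the unit cell, then extend by stationarity. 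Linearity in $D\bar v_\e$ makes the limit in each inequality immediate, and taking $\sup_a$ afterwards recovers $H(p+Du_r,x,\w)\le\overline{H}(p)+\eta$ for $r$ small. The mollification step is not cosmetic: without it $Dv_\e$ exists only a.e.\ (viscosity solution), and you have no compactness of derivatives.
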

To apply~\Thmref{thm:lions-homogenization-theorem} and~\Propref{prop:lions-variational-characterization-of-h-bar} to \FPP, we show that these results apply when $G = \Z^d$ by making the necessary minor modifications (see~\Secref{sec:continuum-homogenization-with-G-equals-Zd} and~\Secref{sec:proof-of-homogenization}). 


\section{Main results}
\label{sec:main-results}
Our first result is the homogenization theorem for the time-constant of discrete \FPP. Let the edge-weights $\tau\colon\Z^d \times A \times \W \to \R$ be 
\begin{enumerate}
  \item (essentially) bounded above and below; i.e.,
    \begin{equation}
      \begin{split}
        0 <~& a = \essinf_{x,\alpha,\w} \tau(x,\alpha,\w), \\
          & b = \esssup_{x,\alpha,\w} \tau(x,\alpha,\w) < \infty, 
      \label{eq:basic-assumption-on-edge-weights}
    \end{split}
    \end{equation}
    and
  \item stationary-ergodic with $G=\Z^d$.
\end{enumerate}
\begin{theorem}
  The time-constant $m(x)$ solves a Hamilton-Jacobi equation
  \begin{equation} 
    \begin{split}
      \overline{H}(Dm(x)) & = 1,\\
      m(0)     & = 0.
    \end{split} 
    \label{eq:time-constant-satisfies-a-PDE}
  \end{equation}
    \label{thm:-equation-for-mu}
\end{theorem}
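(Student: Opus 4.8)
The plan is to realize the discrete first-passage time as (essentially) the continuous first-passage time of Section~\ref{sec:fpp-as-homo-problem}, and then invoke the stochastic homogenization theorem (\Thmref{thm:lions-homogenization-theorem}) applied with the group $G = \Z^d$. Concretely, I would first construct from the stationary-ergodic edge weights $\tau(x,\alpha,\w)$ a continuous weight function $t\colon \R^d \times A \times \W \to \R$ that (i) is Lipschitz in the space variable uniformly in $\alpha$ and $\w$, (ii) satisfies $a \le t \le b$ with the same $a,b$ as in~\Eqref{eq:basic-assumption-on-edge-weights} (up to harmless constants), (iii) is stationary-ergodic with respect to $G=\Z^d$ in the sense of~\Eqref{eq:stationary-process-definition}, and (iv) agrees with $\tau$ on lattice points so that the associated continuous Hamiltonian $H(p,x,\w) = \sup_{\alpha \in A} p\cdot\alpha / t(x,\alpha,\w)$ satisfies the four hypotheses (stationary-ergodic, convex in $p$, coercive, regular) needed for \Thmref{thm:lions-homogenization-theorem}. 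Convexity and coercivity of $H$ in $p$ are immediate from its form as a supremum of linear functions over the finite set $A$ spanning $\R^d$; regularity follows from the Lipschitz bound and $t \ge a > 0$.

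The second step is to compare the discrete and continuous first-passage times. Using the interpolated-path interpretation given in the excerpt (the parametrization $g_{x,y}$ with $g_{x,y}' = \alpha/\tau(z,\alpha)$ along edges), one shows that for lattice points $x$, $\mathcal{T}(x)$ and $T(x)$ differ by at most a bounded additive error — the discrepancy coming only from the need to interpolate the weights along edges, which is controlled by the Lipschitz constant of $t$ and the bound $b$. Hence $\mathcal{T}_n(x) = \mathcal{T}([nx])/n$ and $T_n(x) = T(nx)/n$ have the same locally uniform limit as $n \to \infty$. Then, the result to be proved follows: by \Thmref{thm:lions-homogenization-theorem}, $T_n(x) \to m_{\mathrm{cont}}(x)$, the viscosity solution of $\overline{H}(Dm) = 1$, $m(0)=0$; and by the subadditive ergodic theorem $\mathcal{T}_n(x) \to m(x)$, the time-constant; the comparison forces $m = m_{\mathrm{cont}}$, so the time-constant solves~\Eqref{eq:time-constant-satisfies-a-PDE}.

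There is a preliminary technical point: \Thmref{thm:lions-homogenization-theorem} and \Propref{prop:lions-variational-characterization-of-h-bar} are quoted for the continuous translation group $G = \R^d$, whereas our stationarity is only under $G = \Z^d$. I would handle this by checking that the proof of~\citet{lions_homogenization_2005} goes through with $\Z^d$-stationarity — the ergodic-theorem inputs (subadditive ergodic theorem, multiparameter ergodic theorem) all have $\Z^d$ versions, and the set $S$ of~\Eqref{eq:lions-S-set-definition-translation-group-R} is replaced by its $\Z^d$-stationary analogue — as the excerpt itself indicates is done in \Secref{sec:continuum-homogenization-with-G-equals-Zd} and \Secref{sec:proof-of-homogenization}.

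The main obstacle I expect is the careful construction of the interpolated weight function $t$ together with the quantitative comparison $|\mathcal{T}(x) - T(x)| = O(1)$: one must build $t$ so that it is simultaneously Lipschitz, bounded away from $0$ and $\infty$, \emph{and} exactly $\Z^d$-stationary (not merely stationary in a coarser sense), and then verify that optimal discrete paths can be followed by admissible continuous paths and vice versa with only bounded loss. Everything downstream — convexity, coercivity, regularity of $H$, and the identification of the two limits — is then routine given \Thmref{thm:lions-homogenization-theorem}.
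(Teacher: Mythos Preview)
Your overall strategy---embed the lattice problem in $\R^d$ and invoke Lions--Souganidis with $G=\Z^d$---is exactly the paper's. The gap is in the implementation you sketch. A single Lipschitz weight $t$ with $a\le t\le b$ will not yield $|\mathcal{T}(x)-T(x)|=O(1)$: once $t$ is bounded above, a continuous path can move perpendicular to an edge, or through the interior of a cell, at bounded cost per unit length, producing shortcuts with no discrete analogue. The discrepancy is then linear in $|x|$, not $O(1)$, and the continuous time-constant is strictly below $m$. The paper does not attempt this comparison.

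Instead the paper builds a \emph{family} $t^\delta$ that equals $\tau$ in thin tubes around edges, equals $a$ in small corners around vertices, and equals $\delta^{-1}$ everywhere else (including for motion inside a tube but perpendicular to it). Thus $t^\delta$ is \emph{not} bounded uniformly in $\delta$; the large penalty is precisely what forces continuous paths to mimic discrete ones. The key estimate (\Lemref{lem:uniform-in-n-delta-estimate}) is, after scaling,
\[
0 \;\le\; \mathcal{T}_n(x) - T^\delta_n(x) \;\le\; C\bigl(\sqrt{\delta}\,s + n^{-1}\bigr),
\]
i.e.\ an $O(\sqrt\delta)$ error at the level of scaled quantities, not an $O(1)$ error unscaled. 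One then homogenizes for each fixed $\delta$ to get $\overline{H}^\delta$ and $m^\delta$ solving $\overline{H}^\delta(Dm^\delta)=1$; proves Lipschitz bounds on $\overline{H}^\delta$ and $m^\delta$ \emph{uniform in $\delta$} (\Propref{prop:lipschitz-continuity-of-limit-hamiltonian}, \Propref{prop:lipschitz-continuity-of-time-constant}); shows the interchange of limits $\lim_\delta\lim_n=\lim_n\lim_\delta$ (\Thmref{thm:commutation-theorem-for-u-and-T}); and concludes by the standard stability theorem for viscosity solutions, which passes the PDE through the locally uniform limit $\delta\to 0$.

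So the missing idea is: replace your single bounded interpolant by a penalized $\delta$-family, trade the hoped-for $O(1)$ pre-scaling error for an $O(\sqrt\delta)$ post-scaling error, and close with a double-limit argument plus viscosity stability. Your handling of the $G=\Z^d$ issue is correct and matches what the paper does.
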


The next result is a discrete variational formula for $\overline{H}(p)$. 
\begin{define}[Discrete derivative]
  For a function $\phi\colon\Z^d \to \R$, let
  \[ \mathcal{D}\phi(x,{\alpha}) = \phi(x+\alpha) - \phi(x) \]
    be its discrete derivative at $x \in \Z^d$ in the direction $\alpha \in A$. 
\end{define}

Let the discrete Hamiltonian for \FPP~ be
\begin{equation}
  \mathcal{H}(\phi,p,x,\w) = \sup_{\alpha \in A} \left\{ \frac{-\mathcal{D}\phi(x,\alpha,\w) - p\cdot \alpha}{\tau(x,\alpha,\w)} \right\}, 
  \label{eq:discrete-hamiltonian-1}
\end{equation}
and define the discrete counterpart of the set~\Eqref{eq:lions-S-set-definition-translation-group-R} 
\begin{equation}
  S:= \left\{ 
    \phi:\Z^d \times \Omega \to \R ~\left|~ 
  \begin{split}  
      & \mathcal{D}\phi(x + z,\w) = \mathcal{D}\phi(x,V_z\w), ~\forall x,z \in \Z^d \\
      & E[\mathcal{D}\phi(x,\alpha)] = 0 ~\forall x \in \Z^d \AND \alpha \in A ,
\end{split} 
  \label{eq:discrete-set-S-for-variationalf-formula}
\right\} \right.
\end{equation}
where $A$ is defined in~\Eqref{eq:control-directions}. Then,
\begin{theorem}
 the limiting Hamiltonian $\overline{H}(p)$ is given by
    \begin{equation}
    \overline{H}(p) = \inf_{\phi \in S} \esssup_{\w \in \W} \sup_{x \in \Z^d} \mathcal{H}(\phi,p,x,\w) . 
    \label{eq:original-statement-of-discrete-variational-formula-in-intro}
    \end{equation}
    \label{thm:discrete-variational-formula-for-H-bar}
\end{theorem}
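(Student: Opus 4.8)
The plan is to deduce the discrete variational formula from the continuum variational formula (Proposition \ref{prop:lions-variational-characterization-of-h-bar}) by exploiting the embedding of discrete \FPP\ into the continuous \FPP\ already used to prove Theorem \ref{thm:-equation-for-mu}. Concretely, I would first recall the construction of the continuous edge-weights $t(x,\alpha)$ from the discrete weights $\tau(x,\alpha,\w)$: one interpolates $\tau$ (say by mollification along the edges of the embedded lattice) to obtain a Lipschitz, $\Z^d$-stationary-ergodic function on $\R^d \times A$ with the same essential bounds $a,b$, in such a way that the continuous time-constant of the associated Hamiltonian $H(p,x,\w)=\sup_{\alpha\in A} p\cdot\alpha/t(x,\alpha,\w)$ coincides with the discrete one. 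Since $G=\Z^d$ here, I would invoke the $\Z^d$-versions of Theorem \ref{thm:lions-homogenization-theorem} and Proposition \ref{prop:lions-variational-characterization-of-h-bar} established in Sections \ref{sec:continuum-homogenization-with-G-equals-Zd} and \ref{sec:proof-of-homogenization}, so that the same $\overline{H}(p)$ governs both the continuous and (by Theorem \ref{thm:-equation-for-mu}) the discrete time-constant, and
\[
  \overline{H}(p) \;=\; \inf_{f\in S_{\mathrm{cont}}}\;\esssup_{\w}\;\sup_{x\in\R^d} \sup_{\alpha\in A} \frac{(Df(x,\w)+p)\cdot\alpha}{t(x,\alpha,\w)},
\]
where $S_{\mathrm{cont}}$ is the set in \eqref{eq:lions-S-set-definition-translation-group-R} with $G=\Z^d$.

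The core of the argument is then a two-sided comparison between this continuum infimum and the discrete one in \eqref{eq:original-statement-of-discrete-variational-formula-in-intro}. For the inequality $\overline{H}(p)\le \inf_{\phi\in S}(\cdots)$, given $\phi\in S$ (discrete) I would extend $\phi(\cdot,\w)$ to a Lipschitz function $f_\phi$ on $\R^d$ — piecewise-affine along edges, or better, a smoothed version — whose discrete increments match $\mathcal{D}\phi$, check that $f_\phi\in S_{\mathrm{cont}}$ (stationarity of the gradient and the mean-zero condition are inherited from those of $\mathcal{D}\phi$), and show that the continuum supremum of $\sup_\alpha (Df_\phi+p)\cdot\alpha/t$ does not exceed the discrete supremum of $\mathcal{H}(\phi,p,\cdot,\cdot)$ up to an error that vanishes in the mollification limit; taking infima gives the bound. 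For the reverse inequality $\inf_{\phi\in S}(\cdots)\le \overline{H}(p)$, given a near-optimal $f\in S_{\mathrm{cont}}$ I would \emph{restrict} $f$ to $\Z^d$, obtaining $\phi:=f|_{\Z^d}\in S$ (the defining properties restrict cleanly), and bound the discrete Hamiltonian $\mathcal{H}(\phi,p,x,\w)=\sup_\alpha(-\mathcal{D}\phi(x,\alpha)-p\cdot\alpha)/\tau(x,\alpha)$ in terms of the continuum quantity by using the fundamental theorem of calculus, $\mathcal{D}\phi(x,\alpha)=\int_0^1 Df(x+s\alpha)\cdot\alpha\,ds$ (here $\alpha$ is a unit vector, so $-\mathcal{D}\phi(x,\alpha)-p\cdot\alpha = \int_0^1 -(Df(x+s\alpha)+p)\cdot\alpha\,ds$), together with the Lipschitz control on $t$ to replace $\tau(x,\alpha)$ by $t(\xi,\alpha)$ along the segment at the cost of an $O(\mathrm{Lip})$ error. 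Again the errors are absorbed by passing to the mollification parameter $\to 0$.

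The main obstacle I anticipate is the interchange of the essential-supremum over $\w$ with the mollification/interpolation limit, and more generally making the two comparison inequalities \emph{uniform} in $x$ and $\w$: the continuum Hamiltonian involves $\sup_{x\in\R^d}$ while the discrete one involves $\sup_{x\in\Z^d}$, so I must argue that the worst continuum point can be compared to a nearby lattice point without losing the supremum, using the uniform Lipschitz bound on $t$ and the uniform lower bound $a>0$ on the weights to control denominators. A secondary subtlety is verifying that the piecewise-affine (or mollified) extension $f_\phi$ genuinely lies in $S_{\mathrm{cont}}$ — in particular that $E[Df_\phi]=0$ everywhere, not merely on the lattice — which should follow from the mean-zero hypothesis on $\mathcal{D}\phi$ over all $\alpha\in A$ combined with $\Z^d$-stationarity of the increments, but requires care in how the extension is built so that stationarity of the gradient is not broken. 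Once these uniformity and measurability points are handled, taking infima over the respective classes and sending the regularization to zero yields \eqref{eq:original-statement-of-discrete-variational-formula-in-intro}.
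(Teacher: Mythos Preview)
Your proposal takes a genuinely different route from the paper, and it contains a real gap.

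The paper does \emph{not} derive the discrete variational formula by comparing it to the continuum formula via extension/restriction of test functions. Instead, it argues entirely with discrete objects. For the upper bound $\overline{H}(p)\le\esssup_{\w}\sup_x\mathcal{H}(\phi,p,x,\w)$, it proves a discrete comparison principle (Proposition~\ref{prop:comparison-principle}) for the finite time-horizon cell problem $\mu(x,t)$, yielding $\mu(x,t)\ge\phi(x)-t\sup_x\mathcal{H}(\phi,p,x,\w)$; dividing by $t$ and using $\mu(x,t)/t\to-\overline{H}(p)$ gives the bound. For the lower bound, it uses the discrete stationary cell problem $\nu_\e$, shows it satisfies an approximate discrete HJB equation (Proposition~\ref{prop:approximate-discrete-HJB-for-stationary-problem}), and passes to an $L^2$ weak limit of the normalized increments $\mathcal{D}\hat\nu_\e(0,\alpha,\w)$ as $\e\to0$. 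A loop/cocycle argument shows the limit is the discrete derivative of some $\phi\in S$, and weak convergence plus the Tauberian result $\e\nu_\e\to-\overline{H}(p)$ forces $\mathcal{H}(\phi,p,x,\w)\le\overline{H}(p)$ a.s. The continuum problem enters only indirectly, to transport the convergence results for $\mu$ and $\nu$ via the commutation theorem.

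The gap in your approach is the assumed existence of a single Lipschitz interpolation $t(x,\alpha)$ with bounds $a,b$ whose continuous time-constant \emph{coincides} with the discrete one. No such interpolation is constructed in the paper, and a bounded one cannot work: if $t\le b$ everywhere, continuous paths may cut through cell centers and the continuous \FPT\ no longer sees the lattice structure. The paper's actual construction is a penalized family $t^\delta$ with $t^\delta=\delta^{-1}$ off the fattened edges, and the agreement of time-constants holds only in the limit $\delta\to0$. But then your ``reverse inequality'' step breaks down: you propose to replace $\tau(x,\alpha)$ by $t(\xi,\alpha)$ along the segment at an $O(\mathrm{Lip})$ cost, yet the Lipschitz constant of $t^\delta$ is of order $\delta^{-2}$ and the values in the corners are $a$, not $\tau(x,\alpha)$, so the error does not vanish as $\delta\to0$. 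Likewise, a near-optimal continuum $f$ for $\overline{H}^\delta$ is essentially unconstrained in the centers (where the denominator is $\delta^{-1}$), so its lattice restriction need not be near-optimal for the discrete formula. These are not merely the uniformity subtleties you flagged; they are structural obstructions to the extension/restriction strategy as stated.
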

The variational formula tells us that $\overline{H}(p)$ is positive $1$-homogeneous, convex, and $\overline{H}(p) = 0$ iff $p = 0$. This means that it is a norm on $\R^d$, and indeed, the same is true of $m(x)$. By an elementary Hopf-Lax formula for the PDE in~\Eqref{eq:time-constant-satisfies-a-PDE}, we find that
\begin{cor}
  $\overline{H}(p)$ is the dual norm of $m(x)$ on $\R^d$, defined as usual by
  \[ 
    \overline{H}(p) = \sup_{m(x) = 1} p\cdot x .
  \]
  \label{cor:H-is-the-dual-norm-of-time-constant}
\end{cor}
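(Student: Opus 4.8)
The plan is to deduce Corollary~\ref{cor:H-is-the-dual-norm-of-time-constant} directly from Theorem~\ref{thm:-equation-for-mu} and Theorem~\ref{thm:discrete-variational-formula-for-H-bar}, using the Hopf--Lax representation of the solution to the eikonal-type PDE in~\eqref{eq:time-constant-satisfies-a-PDE}. First I would record the structural facts about $\overline H$ that the variational formula hands us for free: from~\eqref{eq:original-statement-of-discrete-variational-formula-in-intro} and the fact that $\mathcal H(\phi,p,x,\w)$ is positively $1$-homogeneous and convex in $p$ (being a supremum over $\alpha$ of linear-in-$p$ maps divided by the fixed positive weight $\tau(x,\alpha,\w)$), taking the infimum over $\phi\in S$ and the suprema over $\w$ and $x$ preserves both properties, so $\overline H$ is convex and positively $1$-homogeneous. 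Coercivity (indeed the two-sided bound $|p|/b \le \overline H(p) \le \dots$, using $\phi\equiv 0\in S$ and the bounds~\eqref{eq:basic-assumption-on-edge-weights}) gives $\overline H(p)=0$ iff $p=0$. Hence $\overline H$ is a genuine norm on $\R^d$; call its dual norm $\overline H^{*}$.

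Next I would establish the Hopf--Lax formula: because $\overline H$ is a norm, the unique viscosity solution $m$ of $\overline H(Dm)=1$, $m(0)=0$ is
\[
  m(x) = \sup_{p\,:\,\overline H(p)\le 1} p\cdot x = \overline H^{*}(x).
\]
I would prove this by the standard optimal-control / metric-problem argument: the solution of the metric HJB equation $\overline H(Dm)=1$ with $m(0)=0$ equals the value function $\inf\{\,s : g(0)=x,\ g(s)=0,\ \dot g(s)\in\{v : \overline H(v)\le 1\}^{\circ}\text{-type constraint}\,\}$; since the admissible velocity set is the (convex, compact, symmetric, $0$ in its interior) polar body $\{v:\overline H^{*}(v)\le 1\}$, the minimal time to travel from $x$ to $0$ along straight lines is exactly $\overline H^{*}(x)$, and convexity of the constraint set shows straight lines are optimal. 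Alternatively, and more cheaply, I would verify directly that $x\mapsto \overline H^{*}(x)$ is a viscosity solution: it is convex, hence differentiable a.e., and wherever $D\overline H^{*}(x)$ exists one has $\overline H(D\overline H^{*}(x))=1$ by the duality of conjugate norms (the gradient of a norm at a point of differentiability is a dual-unit vector), while the convexity of $\overline H^{*}$ takes care of the viscosity inequalities at the non-smooth points and at the origin; uniqueness from Theorem~\ref{thm:-equation-for-mu}'s PDE (a comparison principle for the metric problem) then forces $m=\overline H^{*}$.

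Finally, dualizing $m=\overline H^{*}$ and using that for norms the bidual is the original ($\overline H^{**}=\overline H$), I get
\[
  \overline H(p) = \sup_{m(x)\le 1} p\cdot x = \sup_{m(x)=1} p\cdot x,
\]
the last equality because $m$ is $1$-homogeneous so the supremum of the linear functional over the unit ball is attained on the unit sphere. This is exactly the statement of Corollary~\ref{cor:H-is-the-dual-norm-of-time-constant}. The only real obstacle is the Hopf--Lax step: one must be careful that $m$ is the \emph{unique} viscosity solution (so that exhibiting $\overline H^{*}$ as a solution suffices) and that the boundary condition $m(0)=0$ together with coercivity of $\overline H$ pins down the right branch; both are standard for metric/eikonal HJB equations with a convex coercive Hamiltonian, and the coercivity and convexity needed are precisely what the variational formula~\eqref{eq:original-statement-of-discrete-variational-formula-in-intro} supplies. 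The homogeneity-and-norm bookkeeping and the conjugate-duality identities are routine.
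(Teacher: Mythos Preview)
Your proposal is correct and follows essentially the same route as the paper: show that $\overline H$ is a norm (the paper's Proposition~\ref{prop:limiting-hamiltonian-is-a-norm}), invoke a Hopf--Lax formula to identify the unique viscosity solution of~\eqref{eq:time-constant-satisfies-a-PDE} with the dual norm $\overline H^{*}$ (the paper's Proposition~\ref{prop:Hopf-Lax-formula-for-u}, proved via the minimum-time control interpretation and the triangle inequality for $L=\overline H^{*}$), and then dualize. The only minor differences are cosmetic: you deduce convexity of $\overline H$ directly from joint convexity of $\mathcal H$ in $(\phi,p)$ and partial minimization, whereas the paper obtains the triangle inequality from the subadditivity of $-\mu(x,t)$ and its convergence to $\overline H(p)$; and you offer a second route to the Hopf--Lax step (direct verification that $\overline H^{*}$ is a viscosity solution plus uniqueness) alongside the paper's optimal-control argument.
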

In part II we present a new explicit algorithm that produces a minimizer of the variational formula under a symmetry assumption. It also contains a discussion of the formula as a duality principle. 

\section{Background on the time-constant}
We give a brief overview of results about the time-constant in \FPP. In this section, unless otherwise specified, we will assume that the nearest-neighbor graph on $\Z^d$ is undirected and that the edge-weights are i.i.d. \citet{cox_limit_1981} proved a celebrated result about the relationship between the time-constant and the so-called limit-shape of~\FPP. Let 
\begin{equation}
  R_t := \{x \in \R^2 : \mathcal{T}([x]) \leq t \}
  \label{eq:percolation-shape-defn}
\end{equation}
be the \emph{reachable set}. It is a fattened version of the sites reached by the percolation before time $t$. We're interested in the limiting behavior of the set $t^{-1} R_t$ as $t \to \infty$. 

Let $F(t)$ be the cumulative distribution of the edge-weights. Define the distribution $G$ by $(1-G(t)) = (1-F(t))^4$. The following theorem holds iff the second moment of $G$ is finite.   
\begin{theorem*}[\citet{cox_limit_1981}]
  Fix any $\e > 0$. If $m(x) > 0$ for all $x \in \R^2$,
  \begin{equation}
    \{x:m(x) \leq 1 - \e \} \subset \frac{R_t}{t} \subset \{x : m(x) \leq 1 + \e\} \textrm{ as } t \to \infty  ~\almostsurely.
    \label{eq:limit-shape-result-of-cox}
  \end{equation}
  Otherwise $m(x)$ is identically $0$, and for every compact $K \subset \R^2$,  
    \[ 
    K \subset \frac{R_t}{t} \textrm{ as } t \to \infty ~\almostsurely. 
    \]
\end{theorem*}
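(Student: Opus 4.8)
The plan is to reduce the shape theorem to a \emph{uniform} law of large numbers for the passage time. Since the edge-weights are i.i.d., and in particular stationary-ergodic under $\Z^2$-translations, the first step is Kingman's \SET~\citep{kingman_ergodic_1968}: applied to the subadditive family $\{\mathcal T([mx],[nx])\}_{0\le m\le n}$ it gives, for every fixed $x\in\Q^2$, the a.s.\ limit $\mathcal T_n(x)\to m(x)$, and shows simultaneously that $m$ is finite, positively $1$-homogeneous, subadditive, and invariant under the symmetries of $\Z^2$, hence extends to a seminorm on $\R^2$ with convex sublevel sets (the applicability of the \SET\ here, i.e.\ integrability of $\mathcal T(0,x)$, is itself part of the analysis and is where the moment hypothesis first enters). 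Subadditivity together with the symmetries forces the dichotomy in the statement: either $m(e_1)>0$, in which case $m$ is a genuine norm and $\{m\le 1\}$ is a compact convex body, or $m(e_1)=0$, in which case $m\equiv 0$.

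The crux is to upgrade pointwise convergence at rational directions to convergence that is uniform over the unit circle, almost surely. For this I would prove the key estimate that $n^{-1}\max_{y\in[-n,n]^2}\mathcal T(0,y)$ stays bounded a.s., and this is exactly where the hypothesis $E[G^2]<\infty$ is used, with $G$ (defined by $1-G(t)=(1-F(t))^{4}$) the law of the minimum of the four edge-weights incident to a vertex: one bounds $\mathcal T(0,y)$, uniformly over $y$ in the box, by passage times along paths that move along the axes while taking cheapest-edge detours, reducing the claim to a maximal inequality for sums of i.i.d.\ $F$- and $G$-distributed overheads, which a Chebyshev/Borel--Cantelli argument settles under the second-moment hypothesis (and the hypothesis is sharp, accounting for the ``iff''). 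Feeding this box estimate into the subadditive Lipschitz bound $|\mathcal T(0,u)-\mathcal T(0,v)|\le \mathcal T(u,v)$ yields an equicontinuity modulus $|\mathcal T_n(x)-\mathcal T_n(x')|\le C|x-x'|+o(1)$ a.s.; since $\mathcal T_n$ converges at a finite set of rational directions that is $\e$-dense in the circle, it converges uniformly: $\sup_{|x|=1}|\mathcal T_n(x)-m(x)|\to 0$ a.s.

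Granting the uniform convergence, the inclusions follow by soft $\e$-arguments. For the inner inclusion: if $m(x)\le 1-\e$ then for all large $t$ one has $\mathcal T([tx])\le t(1-\e/2)\le t$, so $[tx]\in R_t$ and $x\in t^{-1}R_t$; uniformity makes this hold simultaneously for all such $x$, and the fattening in $R_t=\{x:\mathcal T([x])\le t\}$ absorbs the discrepancy between $tx$ and $[tx]$. For the outer inclusion in the non-degenerate case: choosing $R$ with $\min_{|x'|=R}m(x')>1$, any path from $0$ to $[tx]$ with $|x|>R$ must cross the circle of radius $tR$, so $\mathcal T([tx])$ is at least the minimum passage time from $0$ to that circle, which by uniform convergence on the relevant compact annulus tends to $t\min_{|x'|=R}m(x')>t$; hence $R_t/t\subset B(0,R)$ for $t$ large, and on the compact set $B(0,R)$ uniform convergence gives $\mathcal T([tx])>t$ whenever $m(x)>1+\e$, i.e.\ $R_t/t\subset\{m\le 1+\e\}$. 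In the degenerate case $m\equiv 0$, the same uniform law of large numbers gives $\sup_{|x|=1}\mathcal T_t(x)\to 0$, so for any compact $K$ and any $x\in K$ we have $\mathcal T([tx])\le t$ once $t$ is large, i.e.\ $K\subset R_t/t$ eventually.

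I expect the only genuine obstacle to be the box-maximum estimate, that $n^{-1}\max_{y\in[-n,n]^2}\mathcal T(0,y)$ is bounded almost surely: everything upstream is a direct application of the \SET, and everything downstream is elementary bookkeeping, but matching the second-moment hypothesis on $G$ exactly --- neither more nor less --- is what forces the careful path construction and maximal inequality indicated above, and this is essentially the technical heart of \citet{cox_limit_1981}.
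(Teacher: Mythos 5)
This statement is not proved in the thesis at all: it is quoted verbatim as background and attributed to \citet{cox_limit_1981}, so there is no in-paper argument to compare yours against; I can only assess your sketch on its own terms, against the classical proof.

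On those terms your outline is essentially the standard Cox--Durrett/Kesten argument and its architecture is sound: Kingman's \SET\ for fixed (rational) directions, the dichotomy $m$ a norm versus $m\equiv 0$ via lattice symmetries and subadditivity, an almost-sure linear upper bound on passage times uniform over a box as the technical heart (this is indeed where $E$ of the square of a $G$-distributed variable enters, via the order-$n$ count of vertices at distance $n$ in $d=2$ and Borel--Cantelli), and then soft $\e$-arguments for the two inclusions and the degenerate case. Two places are glossed in a way you should be aware of. First, the equicontinuity step $|\mathcal{T}_n(x)-\mathcal{T}_n(x')|\le C|x-x'|+o(1)$ does not follow from the box-maximum estimate centered at the origin alone: you need the linear-growth bound for $\mathcal{T}(u,v)$ uniformly over pairs of far-apart centers $u$ in the box, which requires a separate union-bound/Borel--Cantelli over centers (this is exactly where the summability supplied by the second moment of $G$ is spent). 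Second, your phrase ``sums of i.i.d.\ $F$- and $G$-distributed overheads'' cannot be taken literally: the hypothesis constrains only $G$, and $F$ may have no finite moments at all, so any construction that pays a genuine sum of $F$-distributed edge weights along an axis is uncontrolled; the classical argument only ever pays minima over several edge-disjoint detour options (whose tails are powers of $1-F$, hence $G$-like), both for the integrability needed by Kingman and for the box estimate. Neither caveat changes the strategy, and you correctly identify the box/uniformity estimate as the genuinely hard step, but as written those two points are where a referee of a full write-up would push back.
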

Under the conditions of the above theorem, the sublevel sets of the time-constant 
\[
  B_0 = \{x : m(x) \leq 1 \}
\]
can be thought of as the limit-shape. The extension of the~\citet{cox_limit_1981} theorem to $\Z^d$ was shown by~\citet{kesten_aspects_1986}. \citet{boivin_first_1990} proved the result for stationary-ergodic media. Despite these strong existence results on the time-constant and limit-shape, surprisingly little else is known in sufficient generality~\citep{van_den_berg_inequalities_1993}. 

The following is a selection of facts that are known about the time-constant. It's known that $m(e_1) = 0$ iff $F(0) \geq p_T$, where $p_T$ is closely related to the critical probability for bond percolation on $\Z^d$~\citep{kesten_aspects_1986}\footnote{It's the largest $p$ such that the expected size of the cluster containing the origin is finite.}. \citet{durrett_shape_1981} described an interesting class of examples where $B_0$ has flat-spots. \citet{marchand_strict_2002} and subsequently,~\citet{auffinger_differentiability_2013} have recently explored several aspects of this class of examples in great detail. It's also known that if $F$ is an exponential distribution, $B_0$ is not a Euclidean ball in high-enough dimensions~\citep{kesten_aspects_1986}. Exact results for the limit-shape are only available for ``up-and-right'' directed percolation with special edge weights~\citep{seppalainen_exact_1998,johansson_shape_2000}. In fact,~\citet{johansson_shape_2000} not only obtains the limit-shape, but also shows
\[
  \mathcal{T}(x) \sim m(x) + |x|^{1/3} \xi,
\]
where $\xi$ is distributed according to the (GUE) Tracy-Widom distribution. Hence, \FPP~is thought to be in the KPZ universality class.



Several theorems can be proved assuming properties of the limit-shape. For example, results about the fluctuations of $\mathcal{T}(x)$ can be obtained if it's known that the limit-shape has a ``curvature'' that's uniformly bounded~\citep{auffinger_differentiability_2013,newman_surface_1995}.~\citet{chatterjee_central_2013} prove Gaussian fluctuations for \FPP~in thin-cylinders under the hypothesis that the limit-shape is strictly convex in the $e_1$ direction. Properties like strict convexity, regularity or the curvature of the limit-shape have not been proved, and are of great interest.

We suggest the lecture notes of~\citet{kesten_aspects_1986}, and the review papers by~\citet{grimmett_percolation_2012}, and~\citet{blair-stahn_first_2010} for a more exhaustive survey of the many aspects of first-passage percolation. 

\section{Background on stochastic homogenization}
Stochastic homogenization has been an active field of research in recent years, and there have been several results and methods of proof. Periodic homogenization of HJB equations was studied first by~\citet{lions_homogenization_1987}. The first results on stochastic HJB equations were obtained by~\citet{souganidis_stochastic_1999}, and~\citet{rezakhanlou_homogenization_2000}. These and other results~\citep{schwab_stochastic_2009} were about the ``non-viscous'' problem, and require super-linear growth of the Hamiltonian. That is, for positive constants $C_1,C_2,C_3,C_4 > 0 \AND \alpha_1,\alpha_2 > 1$,
\begin{equation}
  C_1 |p|^{\alpha_1} - C_2 \leq H(p,x,\omega) \leq C_3 |p|^{\alpha_2} + C_4.
  \label{eq:H-assumption-superlinear-growth}
\end{equation}
for all $p,x \in \R^d$ and $\almostsurely~\omega$. These results do not apply directly to our situation since we have exactly linear growth in~\Eqref{eq:H-for-general-continuous-FPP}. 

The ``viscous'' version of the problem includes a second-order term:
\begin{equation}
  \begin{split}
   -\e~\tr A(\e^{-1}x,\w)D^2u^{\e}(x,t,\omega) + H(Du^{\e},\e^{-1}x,\omega) = 1,\\
 \end{split}
  \label{eq:HJ-viscous-eqn-time-dependent}
\end{equation}
where $A(x,\omega)$ is a symmetric matrix. This problem is considered in~\citet{kosygina_stochastic_2006},~\citet{caffarelli_homogenization_2005},~\citet{lions_homogenization_2005}, and~\citet{lions_stochastic_2010}.~\citet{caffarelli_homogenization_2005} and~\citet{kosygina_stochastic_2006} require uniform ellipticity of the matrix $A$; i.e., they assume $\exists~ \lambda_1,~\lambda_2 >0$ such that for all $x \AND \w$,
\begin{equation}
  \lambda_1 |\xi|^2 \leq (A(x,\omega)\xi,\xi) \leq \lambda_2 |\xi|^2.
  \label{eq:A-assumption-uniform-ellipticity}
\end{equation}

\citet{lions_homogenization_2005} allow for $A=0$ (degenerate-ellipticity), and only linear growth of the Hamiltonian. Their method relies heavily on the optimal-control interpretation, and we were therefore able to borrow several ideas from them. The variational formula for the time-constant is a discrete version of theirs. We must also mention the work of~\citet{armstrong_stochastic_2012} that focuses specifically on metric Hamiltonians like the one for \FPP. In fact, it was brought to our attention that~\citet{armstrong_error_2012} made the following observation: since $\mathcal{T}(x,y)$ induces a random metric on the lattice, it's reasonable to believe that there ought to be some relation to metric HJB equations. This is exactly what we prove.

\section{Other variational formulas}
Once we posted our preprint~\citep{krishnan_variational_2013-1} on the arXiv, the concurrent but independent work of~\citet{georgiou_variational_2013} appeared. They prove discrete variational formulas for the directed polymer model at zero and finite temperature, and for the closely related last-passage percolation model. Their ideas originate in the works of~\citet{rosenbluth_quenched_2008},~\citet{rassoul-agha_quenched_2014} and~\citet{rassoul-agha_quenched_2013} for quenched large-deviation principles for random-walk in random environment. 

It is interesting to note that quite coincidentally, our results and those of~\citet{georgiou_variational_2013} almost exactly parallel the development of stochastic homogenization results in the continuum. \citet{lions_homogenization_2005} published their viscous homogenization results in 2005, using the classical cell-problem idea and the viscosity solution framework. Concurrently and independently in 2006,~\citet{kosygina_stochastic_2006} published their viscous stochastic homogenization result. In contrast to~\citet{lions_homogenization_2005}, their proof technique has the flavor of a duality principle and has a minimax theorem at its core. Both our results and those of~\citet{georgiou_variational_2013} are discrete adaptations of~\citet{lions_homogenization_2005} and~\citet{kosygina_stochastic_2006} respectively. 

\chapter{Setup and notation}
\label{chap:preliminaries}
\section{Continuum optimal-control problems}
\label{sec:optimal-control-theory}
We introduce the classical optimal-control framework here, since it plays a major-role in our proofs. We follow~\citet{bardi_optimal_1997} and~\citet{evans_partial_1998} for the setup. The evolution of the state of a control system $g(s)$ is governed by a system of ordinary differential equations 
\begin{equation}
  \begin{split}
    & g'(s) = f(g(s),a(s)),\\
    & g(0) = x,
  \end{split}
  \label{eq:control-ode}
\end{equation}
where $a(s)$ is known as the control. $A$ is typically a compact subset of a topological space like the one in~\Eqref{eq:control-directions}, and the space of allowable controls consists of all measurable functions 
\begin{equation}
  \mathcal{A} := \{ a\colon\R^+ \to  A \}.
  \label{eq:space-of-all-measurable-controls}
\end{equation}
The function $f(y,\alpha)$ is assumed to be bounded and Lipschitz in $y$ (uniformly in $\alpha$). Hence for fixed $a \in \mathcal{A}$,~\Eqref{eq:control-ode} has a unique (global) solution $g_{a,x}(s)$. Define the total cost to be
\begin{equation}
  I_{x,t}(a) := \int_0^t l(g_{a,x}(s),a(s))ds ,
  \label{eq:cost-functional}
\end{equation}
where $l(x,\alpha)$ is called the \emph{running cost} and satisfies for some $C > 0$, and all $x,y \in \R^d \AND \alpha \in A$:
\begin{equation}
  \begin{split}
    & - C \leq l(x,\alpha) \leq C,\\
    & |l(x,\alpha) - l(y,\alpha)| \leq C|x-y|.
  \end{split}
\end{equation}
Let $u_0\colon\R^d \to \R$ be the \emph{terminal cost}. The finite time-horizon problem is defined to be
  \begin{equation}
    u(x,t) = \inf_{a \in \mathcal{A}} \left\{ I_{x,t}(a) + u_0(g(t)) \right\}.
    \label{eq:finite-time-horizon-variational-problem}
  \end{equation}
  We will usually assume that $u_0$ is globally Lipschitz continuous.

There is a dynamic programming principle (DPP) for $u(x,t)$ and consequently, it is the viscosity solution of a HJB equation 
  \begin{equation}
    \begin{split}
      u_t(x,t) & + H(Du,x) = 0,\\
      u(x,0) & = u_0(x),
    \end{split}
    \label{eq:HJ-eqn-nonviscous-time-dependent}
  \end{equation}
where
  \begin{equation}
    H(p,x) = \sup_{a \in A} ( - p \cdot f(x,a) - l(x,a) ).
    \label{eq:continuous-hamiltonian-definition}
  \end{equation}

We will have use for another type of optimal-control problem called the infinite-horizon or stationary problem. For $\e> 0$, let
  \begin{equation}
    v(x) = \inf_{a \in \mathcal{A}} 
    \int_0^{\infty} e^{-\e s} l(g_{a,x}(s),a(s))~ds .
    \label{eq:continuous-stationary-problem}
  \end{equation}
$v(x)$ also has a DPP and is the unique viscosity solution of
  \begin{equation}
    \e v(x) + H(Dv,x) = 0.
    \label{eq:stationary-problem-pde}
  \end{equation}
  with the same Hamiltonian defined in~\Eqref{eq:continuous-hamiltonian-definition}. The functions $v(x)$ and $u(x,t)$ defined above are usually called \emph{value functions}. Randomness is usually introduced into the problem by requiring $l(x,a,\w)$ and $f(x,a,w)$ to be stationary-ergodic processes. 
  
  For the optimal-control problems that are of interest to us, $l$ and $f$ take the particular forms (for fixed $p \in \R^d$):
\begin{align}
  f(x,\alpha,\w) & = \frac{\alpha}{t(x,\alpha,\w)},  \\
  l(x,\alpha,\w) & = \frac{p \cdot \alpha}{t(x,\alpha,\w)},
  \label{eq:special-form-of-l-and-f}
\end{align}
where $t(x,\alpha,\w)$ is the continuous edge-weight function discussed in~\Secref{sec:fpp-as-homo-problem}.

\section{Discrete optimal-control problems}
Next, we define the discrete counterparts to continuum optimal-control problems we defined in the previous section. Let the state $\gamma_{\alpha,x}\colon\Z^+ \to \Z^d$ satisfy the difference equation 
\begin{equation}
  \begin{split}
    \gamma_{\alpha,x}(j+1) & = \gamma_{\alpha}(j) + \alpha(j) \qquad \forall~j \geq 0,\\
    \gamma_{\alpha,x}(0) & = x,
  \end{split}
  \label{eq:ODE-discrete-optimal-control-problem}
\end{equation}
The controls lie in the set
\[  
  \mathcal{A} := \{ \alpha\colon\Z^+ \to  A \}, 
\]
where $A$ is defined in~\Eqref{eq:control-directions}.

Suppose we have edge-weights $\tau(x,\alpha)$ as in \FPP, and discrete running costs $\lambda(x,\alpha)$ satisfying
\begin{equation}
 |\lambda(x,\alpha)| \leq C. 
  \label{eq:assumptions-on-running-cost}
\end{equation}
Assume that $\tau(x,\alpha)$ is positive and bounded as in~\Eqref{eq:basic-assumption-on-edge-weights}. For a control $\alpha \in \mathcal{A}$, let
\[ \mathcal{W}_{x,k}(\alpha) = \sum_{i=1}^k \tau(\gamma_{\alpha,x}(i),\alpha(i)) \]
be the total time for $k$ steps of the path $\gamma$. For any $\mu_0\colon\Z^d \to \R$ the finite time-horizon problem is 
\begin{equation}
  \mu(x,t) = \inf_{\alpha \in \mathcal{A}} \inf_{k \in \Z^+} \left\{ \sum_{i=0}^{k} \lambda(\gamma_{\alpha,x}(i),\alpha(i)) + \mu_0(\gamma_{\alpha,x}(k)) : \mathcal{W}_{x,k}(\alpha) \leq t \right\}.
  \label{eq:discrete-finite-time-horizon-problem}
\end{equation}
Again, we will assume that the discrete Lipschitz norm $\Norm{\mu_0}{\text{Lip}}$ is finite (see~\Eqref{eq:discrete-lipschitz-norm}).

The stationary problem is defined to be
\begin{equation}
  \nu(x) = \inf_{\alpha \in \mathcal{A}} \left( \sum_{i=0}^{\infty} e^{-\e \mathcal{W}_{x,i}(\alpha)} \lambda(\gamma_{\alpha,x}(i),\alpha(i)) \right).
  \label{eq:discrete-stationary-control-problem}
\end{equation}

\section{Generalization of our setup}
\label{sec:generalization}
We've formulated the problem so that it applies to \FPP~on the directed nearest-neighbor graph of $\Z^d$. It covers the following situations:
\begin{itemize}
  \item Regular first-passage percolation on the undirected nearest-neighbor graph of $\Z^d$ if the edge-weights satisfy
    \[
      \tau(x,\alpha) = \tau(x+\alpha,-\alpha) \quad \forall x \in \Z^d \AND \alpha \in A
    \]
  \item Site first-passage percolation (weights are on the vertices of $\Z^d$) if
    \[
      \tau(x, \alpha) = \tau(x) \quad \forall x \in \Z^d \AND \alpha \in A.
    \]
\end{itemize}
These are by no means the most general problems that comes under the optimal-control framework. Specializing to nearest-neighbor first-passage percolation has mostly been a matter of convenience and taste.

For example, the $e_i$ in the definition of $A$~\Eqref{eq:control-directions} could be any basis for $\R^d$ ---i.e., any lattice--- and our main theorems would hold with little modification. If $A = \{e_1,\ldots,e_d\}$ and we consider $\mathcal{T}(0,x)$, we get directed first-passage percolation; i.e., paths are only allowed to go up or right at any point. Versions of the theorems in~\Secref{sec:main-results} do indeed hold for such $A$, but the \FPT~$\mathcal{T}(x)$ is only defined for $x$ in the convex cone of $A$. If $A$ is enlarged to allow for long-range jumps ---and very large jumps are appropriately penalized--- we obtain long-range percolation. We avoid handling such subtleties here. 

The $d+1$ dimensional directed random polymer assigns a random cost to randomly chosen paths in $\Z^d$. At zero-temperature, this too can be seen as an optimal-control problem. However, as mentioned earlier, variational formulas for directed last-passage percolation and zero-temperature polymer models have been proved in considerable generality by~\citet{georgiou_variational_2013}.

\section{Notation}
  We will frequently need to compare discrete and continuous optimal-control problems. So we've tried to keep our notation as consistent as possible. Discrete objects ---functions with at least one input taking values in $\Z^d$--- will be either a Greek or a calligraphic version of a Latin letter. Objects that are not discrete will mostly use the Latin letters. For example, the function $t(x,\alpha)$ in~\Eqref{eq:special-form-of-l-and-f} will be built out of the edge-weights $\tau(x,\alpha)$, and the running costs $l(x,\alpha)$ will be built out of $\lambda(x,\alpha)$. 
  
  Stated as a general rule of thumb: if it's a squiggly variable it's usually discrete and if it's Latin it's usually continuous. Discrete objects and their continuous counterparts are summarized below: 
\begin{center}
  \begin{tabular}{lcc}
    \textbf{Description} & \textbf{Discrete} & \textbf{Continuous} \\
    Edge-weight function & $\tau(x,\alpha,\w)$ & $t(x,\alpha,\w)$ \\
    Running costs & $\lambda(x,\alpha)$ & $l(x,\alpha,\w)$\\
    Paths & $\gamma(i)$ & $g(t)$ \\
    Weight of a path & $\mathcal{W}(\gamma)$ & $W(g)$\\
    First-passage time & $\mathcal{T}(x,y)$ & $T(x,y)$\\
    Total cost of a path & $\mathcal{I}$ & $I$ \\
    Time-constant & & $m(x)$\\
    Finite time-horizon problem & $\mu(x,t)$ & $u(x,t)$\\
    Stationary problem & $\nu(x)$ & $v(x)$\\
    Hamiltonian & $\mathcal{H}(f,p,x,\w)$ & $H(Df,p,x,\w)$\\
    Homogenized Hamiltonian & & $\overline{H}(p)$ \\
    Derivative  & $\mathcal{D}$ & $D$
  \end{tabular}
\end{center}

Other notations and conventions are summarized below. 

$\R^+$ and $\Z^+$ refer to the nonnegative real numbers and integers respectively. $\Leb[a,b]$ represents Lebesgue measure on the interval $[a,b]$. $B_R(x)$ is the Euclidean ball on $\R^d$ that has radius $R$ and is centered at $x$.

Integrals with respect to the probability measure will be written as $E[X]$, as $\int X d\Prob$, or as $\int X \Prob(d\w)$. 

$|\cdot|_p$ will refer to usual the $l^p$ on $\R^d$. $|\cdot|$ without a subscript will either mean the $l^2$ norm or the absolute value of a number, depending on the context. $(\cdot,\cdot)$ is the usual dot product on $\R^d$. $L^p$ refers to the space of functions over a measure space with the usual $\Norm{\cdot}{p}$ norm.

The Lipschitz norm of a function $f\colon X \to \R$ on a metric space $(X,\rho)$ is defined as
  \begin{equation}
    \Norm{\cdot}{Lip}(f) := \inf \left\{ C : |f(x) - f(y)| \leq C \rho(x,y) ~\forall~ x,y \in X \right\}.
    \label{eq:discrete-lipschitz-norm}
  \end{equation}
For us, $(X,\rho)$ will be either $(\R^d,|\cdot|)$ or $(\Z^d,|\cdot|_1)$.

The symbol $\emptyset$ refers to the empty set.

The initialism DPP refers to the dynamic programming principle, and HJB stands for Hamilton-Jacobi-Bellmann.

\chapter{Outline of Proof}
\label{sec:outline-of-paper}
\section{Continuum homogenization with $G=\Z^d$}
\label{sec:continuum-homogenization-with-G-equals-Zd}
As described in the introduction, we will construct a function $t(x,\alpha,\w):\R^d \times A \to \R$ using the edge-weights $\tau(x,\alpha,\w)$. Hence, $t(x,\alpha,\w)$ will only inherit the stationarity of the edge-weights on the lattice; i.e.,  
\[ 
  t(x + z,\cdot,\w) = t(x,\cdot,V_z\w)  \quad \forall~z \in \Z^d.
\]
Therefore, we first observe that
\begin{prop}
  The homogenization theorem (Theorem $3.1$) from~\citet{lions_homogenization_2005} holds with $G=\Z^d$.
  \label{prop:homogenization-theorem-for-discrete-translation-group}
\end{prop}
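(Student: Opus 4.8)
The plan is to reduce this to the $G=\R^d$ statement (\Thmref{thm:lions-homogenization-theorem}) by \emph{augmenting} the probability space with a torus factor, turning a $\Z^d$-stationary Hamiltonian into an $\R^d$-stationary one with the same homogenized limit. For $y\in\R^d$ write $\lfloor y\rfloor\in\Z^d$ and $\{y\}:=y-\lfloor y\rfloor\in[0,1)^d$ for the componentwise integer and fractional parts. Set $\hat\W:=\W\times[0,1)^d$ with $\hat\Prob:=\Prob\otimes\Leb$, and for $y\in\R^d$ define $\hat V_y\colon\hat\W\to\hat\W$ by
\[
  \hat V_y(\w,\theta):=\big(V_{\lfloor\theta+y\rfloor}\,\w,\ \{\theta+y\}\big).
\]
From the identity $\lfloor\theta+y_1+y_2\rfloor=\lfloor\theta+y_2\rfloor+\lfloor\{\theta+y_2\}+y_1\rfloor$ one gets $\hat V_{y_1}\!\circ\hat V_{y_2}=\hat V_{y_1+y_2}$ and $\hat V_0=\mathrm{id}$, so $\hat{\mathcal G}:=\{\hat V_y\}_{y\in\R^d}$ is an $\R^d$-action; and since every $V_z$ preserves $\Prob$ and Lebesgue measure on $[0,1)^d$ is translation-invariant mod $1$, Fubini shows each $\hat V_y$ preserves $\hat\Prob$. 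Now, given $H$ satisfying hypotheses 1--4 but only $\Z^d$-stationary, set $\hat H(p,x,(\w,\theta)):=H(p,\,x+\theta,\,\w)$. Using the $\Z^d$-stationarity of $H$,
\[
  \hat H(p,x+y,(\w,\theta))=H(p,x+y+\theta,\w)=H\!\big(p,\,x+\{\theta+y\},\,V_{\lfloor\theta+y\rfloor}\w\big)=\hat H\big(p,x,\hat V_y(\w,\theta)\big),
\]
so $\hat H$ is $\R^d$-stationary for $\hat{\mathcal G}$, and convexity in $p$, coercivity, and the joint local-Lipschitz regularity all pass to $\hat H$ since it is just a spatial translate of $H$.

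The one point requiring real work is that $\hat{\mathcal G}$ is ergodic whenever $\mathcal G=\{V_z\}_{z\in\Z^d}$ is --- this is the classical fact that the suspension of an ergodic $\Z^d$-action by the torus is an ergodic $\R^d$-action. Here is the argument I would give: let $f\in L^1(\hat\W,\hat\Prob)$ be $\hat{\mathcal G}$-invariant. For $z\in\Z^d$ and $\theta\in[0,1)^d$ one has $\hat V_z(\w,\theta)=(V_z\w,\theta)$, so for $\Leb$-a.e.\ $\theta$ the slice $\w\mapsto f(\w,\theta)$ is $\mathcal G$-invariant and hence (by ergodicity of $\mathcal G$) $\Prob$-a.s.\ equal to a constant $F(\theta)$; thus $f(\w,\theta)=F(\theta)$ for $\hat\Prob$-a.e.\ $(\w,\theta)$. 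Substituting this into invariance under a general $\hat V_y$ gives $F(\{\theta+y\})=F(\theta)$ for every $y\in\R^d$, so $F$ is a.e.\ constant; hence $f$ is a.s.\ constant, i.e.\ $\hat{\mathcal G}$ is ergodic. (The remaining properties of the numbered list in \Secref{sec:results-from-stochastic-homogenization} that \Thmref{thm:lions-homogenization-theorem} needs were checked in the first paragraph.)

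Having verified the hypotheses, I apply \Thmref{thm:lions-homogenization-theorem} to $\hat H$: there is a deterministic, convex, Lipschitz $\overline{\hat H}(p)$ with viscosity solution $\hat m$ of $\overline{\hat H}(D\hat m)=1,\ \hat m(0)=0$, and $\hat T_n(\cdot\,;\w,\theta)\to\hat m$ locally uniformly, $\hat\Prob$-a.s., where $\hat T_n$ is the scaled value function of the problem $\hat H(D\hat T_n,nx,(\w,\theta))=1$, $\hat T_n(0)=0$. Since $\hat H(p,nx,(\w,\theta))=H(p,nx+\theta,\w)$, substituting $z=nx+\theta$ identifies $\hat T_n(x;\w,\theta)=\tfrac1n\,T(nx+\theta,\theta;\w)$, where $T(\cdot,\cdot;\w)$ is the continuous \FPT~of \Secref{sec:fpp-as-homo-problem}, whereas $T_n(x;\w)=\tfrac1n T(nx,0;\w)$. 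Because the continuous edge-weights are bounded above, the \FPT~between any two points at $\ell^1$-distance $\le d$ is $O(1)$; two applications of $T(x,z)\le T(x,y)+T(y,z)$ then give $|T(nx+\theta,\theta;\w)-T(nx,0;\w)|=O(1)$ uniformly in $x\in\R^d$ and $\theta\in[0,1)^d$, hence $\sup_x|\hat T_n(x;\w,\theta)-T_n(x;\w)|=O(1/n)$ for \emph{every} $\w$. Finally Fubini gives a $\Prob$-conull set $\W_0$ such that for each $\w\in\W_0$ the convergence $\hat T_n(\cdot\,;\w,\theta)\to\hat m$ holds for some $\theta(\w)\in[0,1)^d$; for such $\w$, $T_n(\cdot\,;\w)=\hat T_n(\cdot\,;\w,\theta(\w))+O(1/n)\to\hat m$ locally uniformly, so setting $\overline H:=\overline{\hat H}$ and $m:=\hat m$ proves the proposition. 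I expect the ergodicity transfer of the second paragraph to be the only genuinely delicate step; the rest is bookkeeping. As an alternative one could instead re-run the proof of Theorem~3.1 of~\citet{lions_homogenization_2005} verbatim, observing that the group enters it only through the (multiparameter) subadditive ergodic theorem, which holds equally for $\Z^d$.
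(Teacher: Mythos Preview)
Your argument is correct, but the route is genuinely different from the paper's. The paper does exactly what you mention as your ``alternative'' in the last sentence: it opens up the Lions--Souganidis proof, isolates the single place the translation group enters (the subadditive ergodic theorem applied to the control value function $L(y,y';s,t)$), and replaces the continuous-parameter subadditive theorem by Kingman's discrete one along the subsequence $\e=n^{-1}$ with rational endpoints, then uses the uniform-in-$\e,\w$ Lipschitz continuity of $L^\e$ (which comes from the optimal-control interpretation) to pass to arbitrary points. Your suspension argument instead treats \Thmref{thm:lions-homogenization-theorem} as a black box and manufactures an $\R^d$-action by tensoring with the torus; the work shifts entirely to the classical fact that the suspension of an ergodic $\Z^d$-action is an ergodic $\R^d$-flow, together with the $O(1/n)$ comparison of $\hat T_n$ and $T_n$ via subadditivity of the \FPT. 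Your approach is cleaner in that it avoids re-reading the Lions--Souganidis internals; the paper's approach has the advantage that the uniform Lipschitz estimate on $L^\e$ is precisely the kind of control it needs again later (e.g., for the $\delta$-approximation and the variational formula), so isolating it early is not wasted effort. Both are valid and short; it is worth noting that you independently recovered the paper's own method in your final sentence.
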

The proof of~\Propref{prop:homogenization-theorem-for-discrete-translation-group} can be summarized as follows: as the functions $T(x)$ are scaled by $n$, it's as if the lattice is scaled to have size $1/n$. The optimal-control interpretation gives us a uniform in $n$ Lipschitz continuity estimate, and hence the scaled functions $T_n(x)$ do not fluctuate too much on the scaled lattice. Therefore, the discrete subadditive ergodic theorem is enough to prove the homogenization theorem. To flesh out some of the details, we'll identify where exactly the subadditive ergodic theorem is used in~\citet{lions_homogenization_2005}. 

The classical approach to proving homogenization is to find a corrector to the cell-problem; i.e., to find a function $v(y)$ satisfying
\begin{equation}
  \begin{split}
    H(p+Dv(y),y) & =  \overline{H}(p), \\
    \lim_{|y|\to\infty} \frac{v(y)}{|y|} & =  0.
  \end{split}
    \label{eq:cell-problem-with-du}
\end{equation}
However, correctors with sublinear growth at infinity do not exist in general~\citep{lions_correctors_2003}. To get around this problem,~\citet{lions_homogenization_2005} consider the equation:
    \begin{equation}
        u_t(x,t) + H(p + Du,x) = 0,  \quad u(x,0) = 0.
        \label{eq:finite-time-horizon-problem-PDE-for-cell-problem}
    \end{equation}
A (sub)solution of this equation can be written in terms of a subadditive quantity, and it follows from the subadditive ergodic theorem that
\begin{prop}
  for any $R > 0$ and $\e > 0$, there is a (random) $t_0$ large enough so that for all $t \geq t_0$
  \[ 
    \left|\frac{u(x,t)}{t} + \overline{H}(p)\right| \leq \e \quad \forall x \in  B_{Rt}(0) ~\almostsurely.
  \]
  \label{prop:convergence-of-u-to-H-p}
\end{prop}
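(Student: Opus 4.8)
The plan is to write $u(x,t)$ as a subadditive quantity in $t$ and invoke the discrete subadditive ergodic theorem. Recall that $u(x,t)$ solves the finite time-horizon problem~\eqref{eq:finite-time-horizon-problem-PDE-for-cell-problem}; by the optimal-control interpretation (the continuous analogue of~\eqref{eq:discrete-finite-time-horizon-problem} with running cost $l(y,\alpha,\w) = -(p+0)\cdot f - \ldots$, i.e.\ with $l$ built from $p\cdot\alpha/t(y,\alpha,\w)$ as in~\eqref{eq:special-form-of-l-and-f}), one has the representation
\[
  u(x,t) \;=\; \inf_{g}\left\{ -\!\int_0^t l(g(s),\alpha(s),\w)\,ds \;:\; g(0)=x,\ g \text{ an admissible path} \right\},
\]
or more precisely $-u(x,t)$ equals the value of a control problem over paths of ``time-length'' $t$ started at $x$. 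The key structural fact is \emph{superadditivity} of $-u$ along concatenations: a path of time-length $t+s$ from $x$ can be split at time $t$ into a path from $x$ (length $t$) and a path from the splitting point (length $s$), so
\[
  -u(x,0;\,t+s,\w) \;\ge\; -u(x,0;\,t,\w) \;+\; \bigl(-u(y,t;\,t+s,\w)\bigr)
\]
for the appropriate intermediate point $y$; taking the right starting point and using stationarity in the $\Z^d$-variable gives a genuine subadditive (in $t$) cocycle after a sign change. Because the running cost $l$ is bounded (by $|p|\cdot b/a$ say, using~\eqref{eq:basic-assumption-on-edge-weights}) and $f$ has speed bounded above and below, $u(x,t)/t$ is bounded and the integrability hypothesis of the subadditive ergodic theorem holds; stationarity-ergodicity of $\tau$ (hence of $t(\cdot,\cdot,\w)$ under $\Z^d$-shifts) supplies the ergodic input. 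The subadditive ergodic theorem then yields a deterministic constant $c(p)$ with $u(x,t)/t \to -c(p)$ a.s.\ and in $L^1$; by the homogenization theorem (Theorem~\ref{thm:lions-homogenization-theorem}, extended to $G=\Z^d$ by Proposition~\ref{prop:homogenization-theorem-for-discrete-translation-group}) this limit must be $-\overline H(p)$, since $u(x,t)/t$ is, up to the scaling $T_n = T(n\cdot)/n$, exactly the object whose limit defines $\overline H$ via~\eqref{eq:limiting-HJB}. That identifies the pointwise limit.

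The remaining issue — and the main obstacle — is upgrading the a.s.\ pointwise convergence $u(x,t)/t \to -\overline H(p)$ (for fixed $x$) to the \emph{uniform} statement over the growing ball $B_{Rt}(0)$ with a \emph{single} random time $t_0$. For this I would exploit the Lipschitz regularity of $u(\cdot,t)$: from the control representation and the uniform bounds $a \le t(\cdot,\cdot,\w)\le b$, $u(\cdot,t)$ is Lipschitz in $x$ with a deterministic constant $L$ independent of $t$ and $\w$ (moving the starting point by a vector $v$ changes the optimal value by at most $L|v|$, by prepending/adjusting a short path). Hence $x\mapsto u(x,t)/t$ varies by at most $L|x-y|/t$, which is $o(1)$ on $B_{Rt}(0)$ only after localizing. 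The standard device is: cover $B_{Rt}(0)$ by $O((Rt)^d)$ unit cells; on each cell center $z$, $u(z,t)/t = u(0,t;\,\w \text{ shifted by } z)/t$ by stationarity, so control the deviation at all $\approx (Rt)^d$ centers simultaneously. A crude union bound is not available without a rate, so instead one uses that the convergence $u(0,t)/t\to -\overline H(p)$ holds simultaneously for all integer $t$ a.s., combined with a continuity-in-$t$ estimate $|u(x,t)-u(x,t')|\le C|t-t'|$ (again from boundedness of $l$), plus the spatial Lipschitz bound — so that controlling $u$ at the origin at integer times, together with $\Z^d$-stationarity used to translate the origin to each lattice point, and then interpolating, pins down $u(x,t)/t$ uniformly on $B_{Rt}(0)$ once $t$ is large. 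Making the ``simultaneously for all lattice translates'' step rigorous with one random $t_0$ is exactly where some care is needed; the ergodic theorem only gives convergence for each fixed translate on a full-measure set, and one must argue that the intersection over the countably many relevant translates, combined with the deterministic Lipschitz and time-continuity moduli, still leaves a full-measure event on which the ball-uniform bound holds for all large $t$. I expect this localization argument — not the subadditivity or the identification of the constant — to be the technical heart of the proof, and it is essentially the point at which~\citet{lions_homogenization_2005} invoke their subadditive ergodic machinery, which we are here transcribing to the $\Z^d$-stationary setting.
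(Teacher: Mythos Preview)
Your subadditivity step for $u$ itself does not close. Splitting a path at time $t$ gives
\[
  u(x,t+s) \;\le\; u(x,t) + u(z^*,s),
\]
but the splitting point $z^*$ is the \emph{optimal} endpoint of the first leg and hence depends on $x$, $t$, and $\omega$. That is a dynamic-programming inequality, not a subadditive cocycle in $t$: there is no fixed family $(X_{s,t})$ with $X_{0,t+s}\le X_{0,t}+X_{t,t+s}$ and the right stationarity, because the second piece is $u(z^*(\omega),s)$ rather than a shift of $u(0,s)$. Your sentence ``taking the right starting point and using stationarity in the $\Z^d$-variable gives a genuine subadditive cocycle'' is exactly the missing idea, not a step that follows.

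The paper (following Lions--Souganidis) fixes this by passing to the two-point value function $L(y,y';s,t)$, the optimal cost over paths from $y$ to $y'$ on $[s,t]$. This object is honestly subadditive in the concatenation sense,
\[
  L(y,y'';s,t) \;\le\; L(y,y';s,r) + L(y',y'';r,t),
\]
is stationary under $\Z^d$-shifts, and has uniform (in the scaling parameter) continuity estimates. The subadditive ergodic theorem then yields $L^{\e}(y,y';0,t)\to t\,\overline L\!\bigl((y'-y)/t\bigr)$, and one recovers $u$ via the representation $u^{\e}(y,t)=\inf_{y'}\{u_0(y')+L^{\e}(y,y';0,t)\}$. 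Introducing $L$ is not cosmetic: it is what produces an honest subadditive cocycle where your direct argument on $u$ cannot.

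This also changes the nature of the uniformity step. You propose covering $B_{Rt}(0)$ by unit cells and controlling all centers simultaneously, and you correctly flag that a union bound over $\sim(Rt)^d$ translates is unavailable without a rate; your workaround via ``one full-measure event plus Lipschitz/time-continuity interpolation'' is not fleshed out, and the problematic point --- getting a single random $t_0$ good for all lattice translates in the growing ball --- is precisely where the argument would break. In the $L$-based route this issue does not arise: the uniform-in-$\e$ continuity of $L^{\e}$ in $(y,y',s,t)$ lets one pass from convergence at rational $(y,y',s,t)$ (countably many applications of the subadditive theorem) to locally uniform convergence, and then the $\inf_{y'}$ representation transfers this to $u$ uniformly on $B_{R}(0)$ at the scaled level, i.e.\ on $B_{Rt}(0)$ at the original level. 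So the uniformity comes for free from regularity of $L$, rather than from a covering argument on $u$.
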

The main ingredient needed to prove~\Thmref{thm:lions-homogenization-theorem}, are approximate corrector-like functions. One way to construct them is through the stationary equation for the cell-problem:
\[
  \e v_{\e}(x) + H(p+Dv_{\e}(x)) = 0.
\]
\citet{lions_homogenization_2005} quote a Abelian-Tauberian theorem for the variational problems in~\Eqref{eq:finite-time-horizon-variational-problem} and~\Eqref{eq:continuous-stationary-problem}, which says that 
\begin{equation}
  \lim_{t \to \infty} \frac{u(x,t)}{t} = \lim_{\e \to 0} \e v_{\e}(x) = - \overline{H}(p). 
  \label{eq:abelian-tauberian-theorem-for-u-and-v}
\end{equation}
Hence, all we need to do is to prove~\Propref{prop:convergence-of-u-to-H-p} when $G = \Z^d$, and this will give the approximate corrector-like functions.


The variational formula for the limiting Hamiltonian (\Propref{prop:lions-variational-characterization-of-h-bar}) also holds with $G = \Z^d$ in~\Eqref{eq:lions-S-set-definition-translation-group-R}. This requires a little work, rather than merely being an observation like the homogenization theorem. We don't need the continuum variational formula in this paper. We present it here because we'll take an analogous route to prove the discrete variational formula. The proof closely follows~\citet{lions_homogenization_2005}. 

The sketch proof for~\Propref{prop:convergence-of-u-to-H-p} (and hence~\Propref{prop:homogenization-theorem-for-discrete-translation-group}), and the proof of the continuum variational formula are in Appendix~\ref{sec:proof-of-homogenization}.

\section{Embedding the discrete problem in the continuum}
\label{sec:taking-the-discrete-problem-into-the-continuum}
Now that we have the appropriate version of~\Thmref{thm:lions-homogenization-theorem} on $\R^d$ with $G = \Z^d$, we have to take discrete \FPP~into the continuum. In addition to defining a continuum version of~\FPP~---as we've sketched in the introduction--- we also need a discrete cell-problem. We will first ``reverse engineer'' the discrete cell-problem from the continuum Hamiltonian.

For fixed $p \in \R^d$, the shifted Hamiltonian for \FPP~in~\Eqref{eq:H-for-general-continuous-FPP} can be written as
\[
H(p+q,x) = \sup_{\alpha \in A} \frac{-p\cdot \alpha - q\cdot\alpha}{t(x,\alpha)}.
\]
Comparing this formula with the definition of the continuum Hamiltonian in the optimal-control formulation~\Eqref{eq:continuous-hamiltonian-definition}, it follows that we must take the continuous running-cost to be 
\begin{equation}
  l(x,\alpha) = \frac{p\cdot\alpha}{t(x,\alpha)}. 
  \label{eq:running-costs-for-continuous-cell-problem}
\end{equation}
Since we'll eventually require the continuous \FPP~to mimic discrete \FPP, we'll define $t(x,\alpha) = \tau(x,\alpha)$ along the edges (see~\Secref{chap:proofs-related-to-penalization-theorems}). For such a $t(x,\alpha)$ and $l(x,\alpha)$, let $u(x,t)$ be the finite time-horizon variational problem defined in~\Eqref{eq:finite-time-horizon-variational-problem}. Suppose a path $g_{x,\alpha}$ traverses an edge $(x,x+\alpha)$. Along this edge, we accumulate cost
\[
\int_0^{\tau(x,\alpha)} \frac{p\cdot \alpha}{\tau(x,\alpha)} = p\cdot \alpha. 
\]
This indicates that we must consider discrete running costs of the form 
  \begin{equation}
    \lambda(x,\alpha) = p \cdot \alpha .
    \label{eq:running-costs-for-discrete-cell-problem}
  \end{equation}

  For the edge-weights $\tau(x,\alpha)$ and the cell-problem running-cost $\lambda(x,\alpha) = p\cdot\alpha$, we'll consider three optimal-control problems: first-passage percolation $\mathcal{T}(x)$, the finite time cell-problem $\mu(x,t)$, and the stationary cell-problem $\nu(x)$. The latter two are defined in~\Eqref{eq:discrete-finite-time-horizon-problem} and~\Eqref{eq:discrete-stationary-control-problem}. 
  
  Now that we've reverse engineered the cell-problem running costs~\Eqref{eq:running-costs-for-discrete-cell-problem}, we turn to constructing continuous approximations of our three discrete optimal-control problems. Using the edge-weights $\tau(x,\alpha)$ and the running-costs $\lambda(x,\alpha)$, we will define (precisely in~\Secref{chap:proofs-related-to-penalization-theorems}) families of functions $t^{\delta}\colon\R^d \times A \to \R$ and $l^{\delta}\colon\R^d \times A \to \R$ parametrized by $\delta$. Let $T^{\delta}(x)$, $u^{\delta}(x,t)$ and $v^{\delta}(x)$ be defined by~\Eqref{eq:continuousFPT-definition},~\Eqref{eq:finite-time-horizon-variational-problem} and~\Eqref{eq:continuous-stationary-problem} (with $\e = 1$) respectively. As $\delta \to 0$, $T^{\delta},~u^{\delta} \AND v^{\delta}$ will approach $\mathcal{T},~\mu \AND \nu$. Hence, we will frequently refer to the continuum problems as ``\deltaapprox s'' to \FPP.
  
  Next, we define the scaling for the three functions $T^{\delta}(x),u^{\delta}(x,t) \AND v^{\delta}(x)$ and their discrete counterparts $\mathcal{T}(x),~\mu(x) \AND \nu(x)$. The function $\mathcal{T}_n(x)$ has already been defined in~\Eqref{eq:n-scaling-first-passage-time}; $T^{\delta}_n(x)$ is similarly defined in terms of $T^{\delta}(x)$. For the finite-time horizon problems we similarly define
\begin{equation}
\begin{split}
  u^{\delta}_n(x,t) = \frac{u^{\delta}([nx],nt)}{n} ~\AND~ \mu_n(x,t) = \frac{\mu([nx],nt)}{n}.
  \end{split}
  \label{eq:scaling-for-finite-time-horizon-problem}
\end{equation}
For the stationary problem, the scaled versions $v_n^{\delta}(x)$ and $\nu_n(x)$ are obtained by setting $\e = 1/n$ in the variational problems in~\Eqref{eq:continuous-stationary-problem} and~\Eqref{eq:discrete-stationary-control-problem} respectively.

The following interchange of limits (or commutation diagram) is the main ingredient in the proofs of the discrete homogenization theorem and variational formula (\Thmref{thm:-equation-for-mu} and~\Thmref{thm:discrete-variational-formula-for-H-bar}). The theorem compares each of the three sequences of continuum functions 
\begin{equation}
  b^{\delta}_n = T_n^{\delta}(x),~u_n^{\delta}(x,t),\OR \nu_n^{\delta}(x)
  \label{eq:b-sequence-of-continuous-problems}
\end{equation}
with the corresponding discrete versions 
\begin{equation}
  \beta_n = \mathcal{T}_n(x), \mu_n(x,t),\OR \nu_n(x).
  \label{eq:beta-sequence-of-continuous-problems}
\end{equation}
\begin{theorem}
  For each fixed $x \in \R^d$ and $t \in \R^+$, every pair $(b^{\delta}_n,\beta_n)$ in~\Eqref{eq:b-sequence-of-continuous-problems} and~\Eqref{eq:beta-sequence-of-continuous-problems} homogenizes. That is, $b^{\delta}_n \to \overline{b}^{\delta}$ and $\beta_n \to \overline{b}$. Further,
    \begin{equation*}
      \lim_{\delta \to 0} \lim_{n \to \infty} b^{\delta}_n = \lim_{n \to \infty} \lim_{\delta \to 0} b^{\delta}_n = \lim_{n \to \infty} \beta_n = \overline{b}. 
    \end{equation*}
Stated as a commutative diagram, this is
\begin{equation*}
  \begin{CD}
    b^{\delta}_n @>n>> \overline{b}^{\delta} \\
    @V{\delta}VV  @VV{\delta}V \\
    \beta_n          @>>n> \overline{b}
  \end{CD}
\end{equation*}
  \label{thm:commutation-theorem-for-u-and-T}
\end{theorem}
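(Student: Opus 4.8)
The plan is to establish the theorem by proving the three equalities separately and then chaining them together; the commutative diagram is really a statement about a single limiting value $\overline{b}$ reached along two different routes. The top arrow $b^{\delta}_n \to \overline{b}^{\delta}$ (as $n \to \infty$, $\delta$ fixed) is exactly the content of the continuum homogenization theorem: for fixed $\delta$, the functions $t^{\delta}$ and $l^{\delta}$ are stationary-ergodic (with $G = \Z^d$), convex and coercive (in fact linear-growth) in $p$, and Lipschitz-regular, so \Thmref{thm:lions-homogenization-theorem} together with \Propref{prop:homogenization-theorem-for-discrete-translation-group} applies and gives $b^{\delta}_n \to \overline{b}^{\delta}$ locally uniformly; for the stationary problem one invokes in addition the Abelian--Tauberian identity~\Eqref{eq:abelian-tauberian-theorem-for-u-and-v}. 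Similarly, the bottom arrow $\beta_n \to \overline{b}$ is just the statement that discrete \FPP~and its associated cell-problems homogenize, which follows from the subadditive ergodic theorem (for $\mathcal{T}_n$ and $\mu_n$) exactly as in \Propref{prop:convergence-of-u-to-H-p}, and for $\nu_n$ by the same Abelian--Tauberian argument. So the two horizontal arrows are essentially citations.

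The substance is in the two vertical arrows, i.e.\ the $\delta \to 0$ convergence of the continuum \deltaapprox s to their discrete counterparts. First I would prove, for each fixed $n$ (equivalently, before any scaling), the pointwise convergence $T^{\delta}(x) \to \mathcal{T}(x)$, $u^{\delta}(x,t) \to \mu(x,t)$, and $v^{\delta}(x) \to \nu(x)$ as $\delta \to 0$, uniformly on compact sets. This is the heart of the argument and is deferred in the paper to the ``penalization'' chapter~\Secref{chap:proofs-related-to-penalization-theorems}; the idea is that $t^{\delta}$ is built so that it agrees with $\tau$ along the lattice edges and assigns a large (order $1/\delta$) traversal cost to moving off the one-dimensional edge skeleton, so that as $\delta \to 0$ any near-optimal continuum path is forced onto the edges and its cost converges to the cost of the corresponding discrete path; conversely every discrete path is realizable (up to $o(1)$) as a continuum path, giving the matching inequality. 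The quantitative comparison $|b^{\delta}_n - \beta_n| \le C(\delta)$, with $C(\delta)$ independent of $n$ (this uniformity is what makes the diagram commute rather than merely each route converging), comes from the fact that the penalization estimates are local and translation-covariant, so they survive the $n$-scaling unchanged. Given this uniform-in-$n$ bound, $\overline{b}^{\delta} \to \overline{b}$ as $\delta \to 0$ follows by passing to the limit in $n$, and the diagram closes: $\lim_{\delta}\lim_n b^{\delta}_n = \lim_\delta \overline{b}^{\delta} = \overline{b} = \lim_n \beta_n$, while $\lim_n \lim_\delta b^{\delta}_n = \lim_n \beta_n = \overline{b}$ uses the fixed-$n$ convergence from the previous step.

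The main obstacle I expect is precisely the uniform-in-$n$ penalization estimate for the \emph{stationary} problem $v^{\delta}$ versus $\nu$: unlike $T^{\delta}$ and $u^{\delta}$, where near-optimal paths have controlled length on compact sets, the infinite-horizon discounted problem with $\e = 1/n$ involves paths of length $\sim n$, so one must argue that the exponential discount $e^{-\e \mathcal{W}}$ still effectively truncates the path at length $O(n)$ and that the penalization cost accrued off the edge skeleton remains $O(\delta)$ per unit time uniformly; this requires care with the interplay between the discount rate, the scaling, and the $\delta$-dependent speed of the continuum dynamics. A secondary technical point is checking that $t^{\delta}$ and $l^{\delta}$ genuinely satisfy the regularity hypotheses (1)--(4) of \Secref{sec:results-from-stochastic-homogenization} for every fixed $\delta$ --- in particular the local Lipschitz bound, which degrades like $1/\delta$ but, crucially, is finite for each $\delta > 0$, so that the continuum homogenization theorem applies on the nose for each fixed $\delta$.
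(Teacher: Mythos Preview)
Your proposal is essentially correct and matches the paper's approach: the horizontal arrows are citations (continuum homogenization for the top, discrete homogenization for the bottom), and the substance is the uniform-in-$n$ penalization estimate $|b^{\delta}_n - \beta_n| \le C(\delta)$, which is exactly \Lemref{lem:uniform-in-n-delta-estimate} in the paper (with $C(\delta) = C\sqrt{\delta}s + C/n$).

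One organizational difference worth noting: you treat the bottom arrow $\beta_n \to \overline{b}$ as an independent citation of the subadditive ergodic theorem, whereas the paper does \emph{not} prove convergence of $\mu_n$ separately. Instead, the paper uses only the top arrow (continuum homogenization gives $u^{\delta}_n \to \overline{u}^{\delta}$) together with the two-sided squeeze $u^{\delta}_n \le \mu_n \le u^{\delta}_n + C(\sqrt{\delta}s + 1/n)$, and then extracts convergence of $\mu_n$ by taking $\liminf$/$\limsup$ in $n$ and letting $\delta \to 0$. This is slightly more economical, since the discrete finite-horizon cell-problem $\mu(x,t)$ is not obviously subadditive in $t$ on its face; one would have to introduce a two-point quantity analogous to the $L(y,y';s,t)$ in the appendix to run the subadditive argument directly, which the squeeze sidesteps entirely. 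Your route would also work, but requires that extra verification.

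Your concern about the stationary pair $(v^{\delta},\nu)$ is legitimate, and the paper simply declares that case ``nearly identical'' and omits it.
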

We will prove theorem~\Thmref{thm:commutation-theorem-for-u-and-T} when $(b,\beta) = (u,\mu)$, and when $(b,\beta) = (T,\mathcal{T})$. The proof for $(b,\beta) = (v,\nu)$ is nearly identical, and to avoid repetition of ideas, we will omit it.

Proposition~\ref{prop:convergence-of-u-to-H-p} says that 
\[
  \lim_{t \to \infty} u_n^{\delta}(x,t) = -\overline{H}^{\delta}(p).
\]
Let $m^{\delta}$ and $m$ be the time-constants of $T$ and $\mathcal{T}$. The continuum homogenization theorem says that $m^{\delta}$ is the viscosity solution of
\[
  \overline{H}^{\delta}(Dm^{\delta}(x)) = 0, ~m^{\delta}(0) = 0.
\]
\Thmref{thm:commutation-theorem-for-u-and-T} applied to $(u,\mu)$ says that there exists a function $\overline{H}(p)$ such that
\[ 
  \overline{H}^{\delta}(p)  \to \overline{H}(p).
\]
\Thmref{thm:commutation-theorem-for-u-and-T} applied to $(T,\mathcal{T})$ to says that for each $x$, 
\[ 
  m^{\delta}(x) \to m(x).
\]

We would like to show that $m(x)$ solves the PDE corresponding to $\overline{H}(p)$. For this, a uniform in $\delta$ Lipschitz estimate for both $\overline{H}^{\delta}(p)$ and $m^{\delta}(x)$ is sufficient. We state this as two separate propositions below. 
\begin{prop}
  $\overline{H}^{\delta}(p)$ is Lipschitz continuous in $p$ with constant bounded above by $1/a$, where $a$ is defined in~\Eqref{eq:basic-assumption-on-edge-weights}. 
  \label{prop:lipschitz-continuity-of-limit-hamiltonian}
\end{prop}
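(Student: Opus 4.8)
The plan is to exploit the optimal-control representation of $\overline{H}^\delta(p)$ through the finite-time-horizon problem $u^\delta_n(x,t)$ and Proposition~\ref{prop:convergence-of-u-to-H-p}, which tells us that $\overline{H}^\delta(p) = -\lim_{t\to\infty} u^\delta(0,t)/t$ where $u^\delta$ is the continuum finite-horizon value function built from the running cost $l^\delta(x,\alpha) = p\cdot\alpha/t^\delta(x,\alpha)$ and the dynamics $f^\delta(x,\alpha) = \alpha/t^\delta(x,\alpha)$ with $u^\delta(\cdot,0)\equiv 0$. The key observation is that only the running cost $l^\delta$ depends on $p$, and it does so linearly; the set of admissible controls and the trajectories $g_{a,x}(\cdot)$ are independent of $p$. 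So for two vectors $p, p'$ and any fixed control $a$, the difference in accumulated cost along a trajectory is
\[
  \left| \int_0^t \frac{(p-p')\cdot\alpha(s)}{t^\delta(g_{a,x}(s),\alpha(s))}\, ds \right| \le |p-p'|\int_0^t \frac{|\alpha(s)|}{t^\delta(g_{a,x}(s),\alpha(s))}\, ds.
\]
Since $|\alpha| = 1$ for every control direction $\alpha \in A$ and $t^\delta \ge a$ uniformly (this lower bound is inherited from~\Eqref{eq:basic-assumption-on-edge-weights} in the construction of the \deltaapprox), the integral is bounded by $t/a$, giving $|I_{x,t}(a;p) - I_{x,t}(a;p')| \le |p-p'|\, t/a$ uniformly in $a$ and $x$.

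Next I would pass this bound through the infimum over controls: since $u^\delta(x,t;p) = \inf_a I_{x,t}(a;p)$ and the bound above holds uniformly in $a$, we get $|u^\delta(x,t;p) - u^\delta(x,t;p')| \le |p-p'|\,t/a$. Dividing by $t$, taking $t\to\infty$, and invoking Proposition~\ref{prop:convergence-of-u-to-H-p} (or the Abelian-Tauberian identity~\Eqref{eq:abelian-tauberian-theorem-for-u-and-v}) yields $|\overline{H}^\delta(p) - \overline{H}^\delta(p')| \le |p-p'|/a$, which is exactly the claimed Lipschitz bound with constant $1/a$, uniform in $\delta$.

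The main obstacle — really the only delicate point — is making sure the uniform lower bound $t^\delta(x,\alpha) \ge a$ genuinely holds for every $\delta$, i.e., that the $\delta$-approximation construction in~\Secref{chap:proofs-related-to-penalization-theorems} preserves the essential lower bound on the edge-weights without $\delta$-dependent deterioration; once that is in hand the argument is a soft comparison estimate. An alternative route, if one prefers to stay discrete, is to run the same linear-in-$p$ comparison directly on $\mu_n(x,t)$ (or on $\mathcal{T}_n$), using that each step of a discrete path contributes $e^{-\e\mathcal{W}}\lambda = e^{-\e\mathcal{W}} p\cdot\alpha$ with $|\alpha|_1 = 1$ and the step-weight $\tau \ge a$, and then appeal to the commutation theorem~\Thmref{thm:commutation-theorem-for-u-and-T} to transfer the bound to $\overline{H}^\delta(p)$; but the continuum argument above is cleaner because the trajectories are manifestly $p$-independent.
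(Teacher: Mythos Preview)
Your argument is correct and is essentially the paper's own proof: bound the difference of the cost integrals for a fixed path by $|p-p'|$ times the arc-length, then use $t^\delta\ge a$ to bound the arc-length by $t/a$, take the infimum over controls, divide by $t$, and let $t\to\infty$. The paper carries this out in the arc-length (integral) formulation of \Secref{chap:proofs-related-to-penalization-theorems} rather than the control parametrization you use, but the two computations are the same estimate. One small inaccuracy: in the actual $\delta$-approximation the running cost is not globally $p\cdot\alpha/t^\delta$ --- in corners it equals $\pm|p|_\infty$ (per unit arc-length) and in centers it equals $\delta^{-1}$ independently of $p$ --- but your bound $|I_{x,t}(a;p)-I_{x,t}(a;p')|\le |p-p'|\,t/a$ survives unchanged, since the center contribution cancels and the corner contribution is controlled by $\bigl||p|_\infty-|p'|_\infty\bigr|\le|p-p'|$; this is precisely what the paper verifies when it writes $|l^{\delta}_{1,c}(x,r)-l^{\delta}_{2,c}(x,r)|\le|p_1-p_2|\,|r|$. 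Your worry about $t^\delta\ge a$ is resolved by inspection of~\Eqref{eq:time-edge-weights-tau}: on tubes $t^\delta_c=\tau\ge a$, on corners $t^\delta_c=a$, and on centers $t^\delta_c=\delta^{-1}\ge a$ for $\delta$ small; mollification preserves the lower bound.
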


\begin{prop}
  $m^{\delta}(x)$ is Lipschitz in $x$ with constant bounded above by $b$, where $b$ is defined in~\Eqref{eq:basic-assumption-on-edge-weights}.
  \label{prop:lipschitz-continuity-of-time-constant}
\end{prop}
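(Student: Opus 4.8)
The plan is to obtain the bound directly from the optimal-control (first-passage) interpretation of $m^{\delta}(x)$, rather than from the PDE it solves. Recall that $m^{\delta}$ is the time-constant of the $\delta$-approximation $T^{\delta}$, and by \Thmref{thm:commutation-theorem-for-u-and-T} applied to $(T,\mathcal{T})$ we have $m^{\delta}(x) = \lim_{n\to\infty} T^{\delta}_n(x)$, while $\lim_{\delta\to 0} m^\delta(x) = m(x)$ is the time-constant of discrete \FPP. Since a Lipschitz bound that is uniform in $\delta$ passes to the limit, it suffices to prove the estimate for $m^{\delta}$ for each $\delta$, and the cleanest route is to prove it at the prelimit level: show that $T^{\delta}(x,y)$ (equivalently $\mathcal{T}(x,y)$, in the discrete case) satisfies $T^{\delta}(x,y) \le b\,|x-y|$ in the appropriate metric, and that this is preserved under the scaling $T^{\delta}_n(x) = T^{\delta}(nx)/n$ and the limit $n\to\infty$.

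First I would establish the basic comparison: for the discrete problem, given $x,y \in \Z^d$, take the coordinate-wise monotone lattice path from $x$ to $y$, which has exactly $|x-y|_1$ edges; since every edge-weight is at most $b$ by~\Eqref{eq:basic-assumption-on-edge-weights}, the weight of this path is at most $b\,|x-y|_1$, whence $\mathcal{T}(x,y) \le b\,|x-y|_1$ and in particular $|\mathcal{T}(x) - \mathcal{T}(y)| \le b\,|x-y|_1$ by the triangle inequality for $\mathcal T$. For the continuum $\delta$-approximation, the analogous bound $|T^{\delta}(x) - T^{\delta}(y)| \le C_b\,|x-y|$ follows from the construction of $t^\delta$: the allowable paths $g\in\mathcal A$ move with speed $1/t^\delta(g(s),\alpha) \ge 1/b'$ along lattice directions (for a constant $b'$ close to $b$ depending on the mollification), so one can connect any $x$ to any $y$ in time at most a constant multiple of $|x-y|$; the relevant uniform-in-$\delta$ speed bound is part of the setup in~\Secref{chap:proofs-related-to-penalization-theorems}. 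Next, the scaling: $|T^{\delta}_n(x) - T^{\delta}_n(y)| = n^{-1}|T^{\delta}(nx) - T^{\delta}(ny)| \le n^{-1} C_b\, n\,|x-y| = C_b\,|x-y|$, a bound independent of $n$. Passing $n\to\infty$ gives $|m^{\delta}(x) - m^{\delta}(y)| \le C_b\,|x-y|$, and then $\delta\to 0$ pins the constant to exactly $b$ in the $\ell^1$-type metric, since the $t^\delta$ converge to $\tau$ and the excess in $b'$ over $b$ vanishes.

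The one point requiring care — and the main obstacle — is making the constant \emph{exactly} $b$ (not merely $O(b)$) and in the right norm. In the discrete case the bound is naturally $b\,|x-y|_1$, but $m(x)$ is a function on $\R^d$ obtained as a limit of lattice quantities, so I must check that the mollification/interpolation used to define the $\delta$-approximations does not inflate the Lipschitz constant in the limit $\delta\to 0$; this is where one uses that $t^\delta(x,\alpha) \to \tau(x,\alpha)$ and that $t^\delta \le b + o(1)$ uniformly, so that the speed-one-over-$t^\delta$ paths give time $\le (b+o(1))|x-y|_1$, and the $o(1)$ disappears. If a cleaner argument is wanted, one can instead invoke the already-established homogenization: $m^{\delta}$ is the viscosity solution of $\overline{H}^{\delta}(Dm^{\delta}) = 1$, and by \Propref{prop:lipschitz-continuity-of-limit-hamiltonian} (or rather its lower-bound counterpart) $\overline{H}^{\delta}(p) \le \overline{H}^\delta(0) + \|p\|/a$ is not quite what is needed; the coercivity estimate $\overline{H}^{\delta}(p) \ge \|p\|_\ast/b$ for the dual norm, which follows from $\mathcal H \ge (p\cdot\alpha)/b$ in~\Eqref{eq:discrete-hamiltonian-1}, combined with the Hopf–Lax representation $m^{\delta}(x) = \sup_{\overline H^\delta(p)\le 1} p\cdot x$, yields $|m^\delta(x)| \le b\,|x|_1$ directly. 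I would present the control-theoretic path argument as the main proof since it is self-contained and transparent, and remark that it also re-derives the coercivity of $\overline H^{\delta}$ that feeds into \Corref{cor:H-is-the-dual-norm-of-time-constant}.
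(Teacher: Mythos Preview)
Your approach is essentially the paper's: use subadditivity of $T^{\delta}$, bound $T^{\delta}([ny],[nx])$ by an explicit edge path, divide by $n$, and pass to the limit. The one place you make life harder than necessary is the worry about the constant being $b'>b$ and needing $\delta\to 0$ to pin it down. This is a non-issue once you notice that the $\delta$-approximation is built so that along the actual lattice edges $t^{\delta}\le b$ exactly: on the center line of a tube $TU^{\delta}_{z,\alpha}$ the mollifier (support $\delta/2$) only averages values inside the tube, where $t^{\delta}_c=\tau(z,\alpha)\le b$, and in corners $t^{\delta}_c=a\le b$. Hence an edge path from $[ny]$ to $[nx]$ costs at most $b\,|[ny]-[nx]|_1$ for every $\delta$, and the Lipschitz constant $b$ comes out directly, uniformly in $\delta$, with no limiting argument in $\delta$ required. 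The paper's proof is exactly this two-line computation; your alternative via the Hopf--Lax representation and coercivity of $\overline{H}^{\delta}$ would also work but is circuitous here.
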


Since $\overline{H}^{\delta}$ and $m^{\delta}$ converge locally uniformly to $\overline{H}$ and $m$ respectively, the standard stability theorem for viscosity solutions~\citep{crandall_users_1992} implies~\Thmref{thm:-equation-for-mu}. 
 
The proofs of the results in this section are in~\Chapref{chap:proofs-related-to-penalization-theorems}.

\section{Discrete variational formula and solution of the limiting PDE}
\label{sec:outline-discrete-variational-formula}
The commutation theorem also transfers the Abelian-Tauberian theorem over from the continuum relating the limits of $u^{\delta}_n(x,t)$ and $v^{\delta}_{\e}(x)$~\Eqref{eq:abelian-tauberian-theorem-for-u-and-v}. That is, both $\mu_n(x,t)$ and $\nu_{\e}(x)$ converge to $\overline{H}(p)$ almost surely as $n \to \infty$ and $\e \to 0$. This is very useful in establishing the discrete version of the variational formula. We have
\begin{prop} for each $R > 0$ and $\eta > 0$, there is a small enough (random) $\e_0$ such that for all $\e \leq \e_0$,
  \[ 
    \left|\e \nu_{\e}(x) + \overline{H}(p)\right| < \eta \quad \forall x \in B_{\e^{-1}R}(0).
  \]
  \label{prop:statement-of-tauberian-theorem}
\end{prop}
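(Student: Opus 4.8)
\textbf{Proof proposal for Proposition~\ref{prop:statement-of-tauberian-theorem}.}

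The plan is to derive this statement as a consequence of the commutation theorem~\Thmref{thm:commutation-theorem-for-u-and-T} applied to the pair $(v,\nu)$, exactly as the analogous statement for $\mu$ follows from the $(u,\mu)$ case. First I would recall from~\Eqref{eq:abelian-tauberian-theorem-for-u-and-v} that in the continuum, the Abelian--Tauberian theorem gives $\lim_{\e \to 0} \e v^{\delta}_{\e}(x) = -\overline{H}^{\delta}(p)$, and moreover (by the same subadditive/monotonicity argument that underlies~\Propref{prop:convergence-of-u-to-H-p}) this convergence is uniform on balls $B_{\e^{-1}R}(0)$ after the $\e^{-1}$ rescaling: for each $R,\eta>0$ there is a random $\e_0$ so that $|\e v^{\delta}_{\e}(x) + \overline{H}^{\delta}(p)| < \eta$ for all $\e \le \e_0$ and $x \in B_{\e^{-1}R}(0)$, at least for $\e$ restricted to the sequence $\e = 1/n$. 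Under the scaling convention from~\Secref{sec:taking-the-discrete-problem-into-the-continuum} this is precisely the statement $v^{\delta}_n(x) \to -\overline{H}^{\delta}(p)$ locally uniformly.

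Next I would invoke~\Thmref{thm:commutation-theorem-for-u-and-T} for $(b,\beta)=(v,\nu)$, which asserts that both $v^{\delta}_n \to \overline{v}^{\delta}$ and $\nu_n \to \overline{v}$ with $\lim_{\delta\to 0}\overline{v}^{\delta} = \overline{v}$; combined with the previous paragraph this forces $\overline{v}^{\delta} = -\overline{H}^{\delta}(p)$ and hence $\overline{v} = \lim_{\delta\to 0}(-\overline{H}^{\delta}(p)) = -\overline{H}(p)$, using~\Propref{prop:lipschitz-continuity-of-limit-hamiltonian} (or just the convergence $\overline{H}^{\delta}\to\overline{H}$ asserted in the outline) for the last equality. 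Thus $\e \nu_{\e}(x) = n^{-1}\nu_n(x)\big|_{n=\e^{-1}} \to -\overline{H}(p)$. The remaining point is to upgrade pointwise-in-$x$ convergence of the scaled discrete stationary problem to uniformity over $B_{\e^{-1}R}(0)$: this comes from a uniform-in-$\e$ discrete Lipschitz estimate for $\e\nu_{\e}(\cdot)$ (with constant controlled by $b$, via the optimal-control interpretation — move one lattice step at cost at most $b$ and discount factor near $1$), so that equicontinuity plus pointwise convergence on a dense countable set yields local uniform convergence; rescaling back to the lattice of spacing $\e$ turns a fixed ball into $B_{\e^{-1}R}(0)$.

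I would structure the write-up as: (i) state the $(v,\nu)$ instance of~\Thmref{thm:commutation-theorem-for-u-and-T} and the continuum Abelian--Tauberian fact, identifying $\overline{v}=-\overline{H}(p)$; (ii) prove the uniform discrete Lipschitz bound for $x \mapsto \e\nu_{\e}(x)$; (iii) combine equicontinuity with pointwise convergence to get the claimed uniform estimate on $B_{\e^{-1}R}(0)$. The main obstacle is step (i) — specifically, making sure the commutation theorem's proof really does go through for the stationary pair $(v,\nu)$ (the excerpt says it is "nearly identical" to $(u,\mu)$ but omits it), and that the Abelian--Tauberian passage is compatible with the $\e = 1/n$ restriction used when quoting~\Propref{prop:convergence-of-u-to-H-p}; everything after that is soft analysis (Arzelà--Ascoli-type reasoning on $\Z^d$) that should be routine given the Lipschitz estimates already in hand.
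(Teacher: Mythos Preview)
Your proposal is correct and matches the paper's own (largely implicit) argument: the paper states this proposition in the outline precisely as a consequence of the commutation theorem~\Thmref{thm:commutation-theorem-for-u-and-T} for the pair $(v,\nu)$ together with the continuum Abelian--Tauberian relation~\Eqref{eq:abelian-tauberian-theorem-for-u-and-v}, and does not give a separate detailed proof. Your added step of using the uniform-in-$\e$ Lipschitz bound on $\nu_{\e}$ (which the paper later records as~\Eqref{eq:discrete-lipschitz-estimate-for-stationary-problem-nu}) to pass from pointwise to uniform convergence over $B_{\e^{-1}R}(0)$ via rescaling and equicontinuity is exactly the missing soft-analysis detail one would supply.
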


The stationary cell-problem has the discrete (DPP)
\begin{equation}
  \nu_{\e}(x) = \inf_{\alpha \in A} \left( \alpha \cdot p + e^{-\e \tau(x,\alpha)} \nu_{\e}(x + \alpha) \right).
  \label{eq:discrete-dpp-stationary-problem}
\end{equation}
With a little manipulation of the DPP, we can obtain a discrete version of the stationary PDE in~\Eqref{eq:stationary-problem-pde}.
\begin{prop}
  For all $x \in \Z^d$, there is a constant $C > 0$ uniform in $\e$ and $\w$ such that
  \[
  -C \e \leq \e \nu_{\e}(x) + \mathcal{H}(\nu_{\e},p,x) \leq C \e,
  \]
  \label{prop:approximate-discrete-HJB-for-stationary-problem}
  where $\mathcal{H}$ is the discrete Hamiltonian~\Eqref{eq:discrete-hamiltonian-1}.
\end{prop}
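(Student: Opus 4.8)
The plan is to start from the dynamic programming principle \eqref{eq:discrete-dpp-stationary-problem} and rearrange it so that it looks like the discrete Hamiltonian $\mathcal{H}(\nu_{\e},p,x)$ plus an error controlled by $\e$. From \eqref{eq:discrete-dpp-stationary-problem}, for each $x$ there is an optimal $\alpha^*$ achieving the infimum, and for every $\alpha \in A$,
\[
  \nu_{\e}(x) \le \alpha \cdot p + e^{-\e \tau(x,\alpha)} \nu_{\e}(x+\alpha),
\]
with equality at $\alpha = \alpha^*$. Subtracting $\nu_{\e}(x+\alpha)$ from both sides and dividing by $\tau(x,\alpha)$ (which is bounded between $a$ and $b$ by \eqref{eq:basic-assumption-on-edge-weights}) gives, for all $\alpha$,
\[
  \frac{-\mathcal{D}\nu_{\e}(x,\alpha) - p\cdot\alpha}{\tau(x,\alpha)} \le \frac{(1 - e^{-\e\tau(x,\alpha)})}{\tau(x,\alpha)}\,\nu_{\e}(x+\alpha),
\]
with equality at $\alpha^*$. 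Taking the supremum over $\alpha$ on the left yields $\mathcal{H}(\nu_{\e},p,x)$ on the nose, so the whole argument reduces to controlling the right-hand quantity $\frac{1 - e^{-\e\tau}}{\tau}\,\nu_{\e}(x+\alpha)$ uniformly.

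The key estimates are then: first, the elementary inequality $0 \le 1 - e^{-t} \le t$ for $t \ge 0$, which gives $0 \le \frac{1 - e^{-\e\tau(x,\alpha)}}{\tau(x,\alpha)} \le \e$; and second, a uniform-in-$\e$, uniform-in-$\w$ bound $|\e\nu_{\e}(x)| \le C$ on the \emph{discounted} value function. This second bound is the standard a priori estimate for discounted optimal control: since $\lambda(x,\alpha) = p\cdot\alpha$ satisfies $|\lambda(x,\alpha)| \le |p|_1 =: C_0$ and the discount factors along a path are $e^{-\e\mathcal{W}_{x,i}(\alpha)}$ with $\mathcal{W}_{x,i}(\alpha) \ge i a$, summing the geometric-type series gives $|\nu_{\e}(x)| \le C_0 \sum_{i\ge 0} e^{-\e i a} = C_0/(1 - e^{-\e a}) \le C_0/(\e a \cdot \text{const})$, hence $\e|\nu_{\e}(x)| \le C$ with $C = C(C_0,a)$ independent of $x$, $\e$, $\w$. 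Combining, the right-hand side above is at most $\e \cdot |\nu_{\e}(x+\alpha)| \le C'$ — wait, this only gives an $O(1)$ bound, so I need to be a little sharper: write $\frac{1-e^{-\e\tau}}{\tau}\nu_{\e}(x+\alpha) = \e \cdot \frac{1-e^{-\e\tau}}{\e\tau}\cdot \nu_{\e}(x+\alpha)$, and note $\frac{1-e^{-\e\tau}}{\e\tau} \le 1$; but the clean route is to observe directly that $\big|\frac{1-e^{-\e\tau(x,\alpha)}}{\tau(x,\alpha)}\nu_{\e}(x+\alpha)\big| \le \frac{1-e^{-\e\tau(x,\alpha)}}{\tau(x,\alpha)}\cdot\frac{C}{\e}$, and since $\frac{1-e^{-\e\tau}}{\tau} \le \e$, this is $\le C$, which is still only $O(1)$. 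The correct sharpening: use $|\e\nu_{\e}| \le C$ to write $|\nu_{\e}(x+\alpha)| \le C/\e$, and $\frac{1 - e^{-\e\tau(x,\alpha)}}{\tau(x,\alpha)} \le \e$ as well as $\le \frac{1}{a}(1-e^{-\e b})$; then the product is bounded by $\e \cdot C/\e$... I will instead bound $\nu_{\e}(x+\alpha)$ in terms of $\nu_{\e}(x)$ via one step of the DPP, $|\nu_{\e}(x+\alpha) - \nu_{\e}(x)| \le C_0 + |1 - e^{-\e\tau}||\nu_{\e}(x+\alpha)|$, which after the $|\e\nu_{\e}|\le C$ bound shows $\nu_{\e}$ has discrete increments bounded uniformly in $\e$ and $\w$; then $\frac{1-e^{-\e\tau}}{\tau}\nu_{\e}(x+\alpha) = \frac{1-e^{-\e\tau}}{\tau}\nu_{\e}(x) + O(\e)$ and $\frac{1-e^{-\e\tau}}{\tau}\nu_{\e}(x) = \e\nu_{\e}(x) + O(\e^2|\nu_{\e}(x)|) = \e\nu_{\e}(x) + O(\e)$ using $|\frac{1-e^{-t}}{t} - 1| \le t$. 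This gives exactly $-C\e \le \e\nu_{\e}(x) + \mathcal{H}(\nu_{\e},p,x) \le C\e$.

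The main obstacle is precisely the bookkeeping in the previous paragraph: one must be careful that the error in replacing $e^{-\e\tau(x,\alpha)}\nu_{\e}(x+\alpha)$ by $\nu_{\e}(x) - \mathcal{D}\nu_{\e}(x,\alpha)$ (up to the factor $\tau$) is genuinely $O(\e)$ and not $O(1)$, and this hinges on having \emph{two} ingredients together — the a priori bound $|\e\nu_{\e}(x)| \le C$ uniform in everything, and a uniform bound on the one-step discrete increments of $\nu_{\e}$ (equivalently a uniform discrete Lipschitz bound, which again follows from the DPP plus boundedness of $\lambda$ and $\tau$). Everything else is the elementary estimate $|e^{-t} - 1 + t| \le t^2/2$ applied with $t = \e\tau(x,\alpha) \in [\e a, \e b]$, and taking suprema over $\alpha \in A$ to recover $\mathcal{H}$. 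I would also remark that the same argument, run with the inequality directions kept separate (equality at the optimal $\alpha^*$ for the lower bound on $\mathcal{H}$, and the universal inequality for the upper bound), produces the two-sided estimate without any additional work.
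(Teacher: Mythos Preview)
Your approach is essentially the same as the paper's: both derive the a~priori bound $|\e\nu_{\e}(x)| \le C$ from the variational definition, establish a uniform discrete Lipschitz bound on $\nu_{\e}$ from the DPP, and then Taylor-expand the exponential in the DPP to extract $\e\nu_{\e}(x) + \mathcal{H}(\nu_{\e},p,x)$ up to $O(\e)$ error. The paper organizes these as a separate claim and then combines them in two clean lines, whereas your write-up discovers them on the fly with a couple of false starts, but the mathematical content is the same. One small sign slip: after subtracting $\nu_{\e}(x+\alpha)$ you should get $-(1-e^{-\e\tau})\nu_{\e}(x+\alpha)$ on the right, not $+(1-e^{-\e\tau})\nu_{\e}(x+\alpha)$; since $\nu_{\e}\le 0$ this does not affect the final estimate, but it is worth cleaning up.
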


The final ingredient for the variational formula is what is usually called a comparison principle for HJB equations. In the continuum, the comparison principle is stated for sub- and supersolutions of the PDE. The discrete comparison principle we prove is a less general version that suffices for our purposes. Consider the discrete problem in~\Eqref{eq:discrete-finite-time-horizon-problem} for any $\phi\colon\Z^d \to \R$ such that $\Norm{\phi}{Lip} < \infty$. Then,
\begin{prop}
  \[ 
    \mu(x,t) \geq \phi(x) - t \sup_{x \in \Z^d} \mathcal{H}(\phi,p,x) \quad \forall x \in \Z^d \AND t \in \R^+.
  \]
  \label{prop:comparison-principle}
\end{prop}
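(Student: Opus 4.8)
The plan is to run the standard comparison argument of continuum HJB theory in its discrete, optimal-control incarnation. In the continuum, if $\phi$ is a subsolution of the cell equation $H(p+D\phi,x)\le c$, then $\psi(x,t)=\phi(x)-ct$ is a subsolution of the time-dependent equation $\psi_t+H(p+D\psi,x)\le 0$ with $\psi(\cdot,0)=\phi$, and comparison forces the value function with terminal data $\phi$ to dominate $\psi$. On the lattice this ``comparison'' is nothing more than summing one one-step inequality along an arbitrary admissible trajectory, so no viscosity machinery is needed. Throughout, $\mu(x,t)$ denotes the finite time-horizon problem \eqref{eq:discrete-finite-time-horizon-problem} with terminal cost $\mu_0=\phi$, running cost $\lambda(x,\alpha)=p\cdot\alpha$, and I set $c:=\sup_{x\in\Z^d}\mathcal{H}(\phi,p,x)$.

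First I would record two preliminary facts. (i) $c<\infty$: since $\Norm{\phi}{Lip}<\infty$ we have $|\mathcal{D}\phi(x,\alpha)|\le\Norm{\phi}{Lip}$, so together with $\tau\ge a>0$ (see~\eqref{eq:basic-assumption-on-edge-weights}) the quantity $\mathcal{H}(\phi,p,x)$ is bounded above by $(\Norm{\phi}{Lip}+|p|)/a$, uniformly in $x$ and $\w$. (ii) $c\ge 0$: because the control set $A$ is symmetric, for every $y\in\Z^d$ and $\alpha\in A$ the numerators $-\mathcal{D}\phi(y,\alpha)-p\cdot\alpha$ and $-\mathcal{D}\phi(y+\alpha,-\alpha)-p\cdot(-\alpha)$ are negatives of one another; hence at least one of $\mathcal{H}(\phi,p,y)$ and $\mathcal{H}(\phi,p,y+\alpha)$ is nonnegative, and taking the supremum over the lattice gives $c\ge 0$. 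Fact (ii) is precisely what lets me replace the time budget $\mathcal{W}_{x,k}(\alpha)\le t$ by the clean bound with $t$ in the conclusion.

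The core one-step inequality I would extract from the definition~\eqref{eq:discrete-hamiltonian-1}: since $\mathcal{H}(\phi,p,x)\le c$ and $\tau(x,\alpha)>0$, for every $x\in\Z^d$ and $\alpha\in A$,
\[
  \lambda(x,\alpha)=p\cdot\alpha\;\ge\;\phi(x)-\phi(x+\alpha)-c\,\tau(x,\alpha).
\]
I would then fix any control $\alpha\in\mathcal{A}$ and any $k$ admissible for $(x,t)$, i.e.\ with $\mathcal{W}_{x,k}(\alpha)\le t$, and let $\gamma=\gamma_{\alpha,x}$ be the resulting trajectory. Applying the displayed inequality at each visited site $\gamma(i)$ with the chosen direction $\alpha(i)$ and summing over the steps of $\gamma$ (following the recursion defining $\mu$), the $\phi$-terms telescope to $\phi(x)-\phi(\gamma(k))$ while the edge-weights sum to $\mathcal{W}_{x,k}(\alpha)\le t$; using $c\ge 0$ this yields
\[
  \sum_{i}\lambda(\gamma(i),\alpha(i))+\phi(\gamma(k))\;\ge\;\phi(x)-c\,\mathcal{W}_{x,k}(\alpha)\;\ge\;\phi(x)-c\,t.
\]
Since this holds for every admissible pair $(\alpha,k)$, taking the infimum over them gives $\mu(x,t)\ge\phi(x)-ct=\phi(x)-t\sup_{x\in\Z^d}\mathcal{H}(\phi,p,x)$, which is the claim.

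I do not expect a genuine obstacle here: the argument is short and essentially algebraic. The one point deserving care is fact (ii), the nonnegativity of $c$, which is what reconciles the inequality ``$\mathcal{W}_{x,k}(\alpha)\le t$'' in the definition of $\mu$ with the sharp conclusion involving $t$; without the symmetry of $A$ one would only obtain the bound with $\mathcal{W}_{x,k}(\alpha)$ in place of $t$. Everything else is the exact discrete shadow of testing the time-dependent HJB equation against the affine-in-$t$ subsolution $\phi(x)-ct$, with the index bookkeeping dictated by the dynamic programming recursion for $\mu$.
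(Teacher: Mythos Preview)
Your proof is correct and is in fact cleaner than the paper's. The paper does not argue directly from the variational definition of $\mu(x,t)$; instead it introduces $\zeta(x,t)=\phi(x)-t\sup_x\mathcal{H}(\phi,p,x)$, truncates the problem to a large ball $Z_K$ so that the reachable set can only expand at finitely many times, and then proves $\zeta\le\mu_K$ by a finite induction on these jump times, using the dynamic programming principle at each step and finally letting $K\to\infty$. Your telescoping-along-a-trajectory argument bypasses the DPP, the truncation, and the jump-time induction entirely: once you have the one-step inequality $p\cdot\alpha\ge\phi(x)-\phi(x+\alpha)-c\,\tau(x,\alpha)$ and the observation $c\ge 0$, summing over any admissible $(\alpha,k)$ and taking the infimum finishes the job in two lines. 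The paper's route has the structural advantage of mirroring the viscosity comparison principle (and would adapt more mechanically to settings where only a DPP is available rather than an explicit variational representation), but for this statement your approach is more elementary and also makes explicit the role of $c\ge 0$, which the paper's proof uses silently in its base case $\mu_K(x,s_k)=\phi(x)\ge\zeta(x,s_k)$.
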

Using these facts, the discrete variational formula in~\Thmref{thm:discrete-variational-formula-for-H-bar} is easy to prove.

Now that we have a formula for the limiting Hamiltonian, we can solve the PDE in~\Eqref{eq:limiting-HJB} to obtain the time-constant $m(x)$. It follows directly from the variational formula that 
\begin{prop}
  $\overline{H}(p)$ is a norm on $\R^d$.
  \label{prop:limiting-hamiltonian-is-a-norm}
\end{prop}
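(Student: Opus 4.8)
The plan is to extract all four norm axioms directly from the variational formula in Theorem~\ref{thm:discrete-variational-formula-for-H-bar}, namely
\[
  \overline{H}(p) = \inf_{\phi \in S} \esssup_{\w} \sup_{x \in \Z^d} \sup_{\alpha \in A} \frac{-\mathcal{D}\phi(x,\alpha,\w) - p\cdot\alpha}{\tau(x,\alpha,\w)},
\]
using the bounds $0 < a \le \tau \le b < \infty$ from~\eqref{eq:basic-assumption-on-edge-weights}. The four things to check are: (i) positive $1$-homogeneity, $\overline{H}(tp) = t\overline{H}(p)$ for $t \ge 0$; (ii) subadditivity (triangle inequality), $\overline{H}(p+q) \le \overline{H}(p) + \overline{H}(q)$; (iii) symmetry, $\overline{H}(-p) = \overline{H}(p)$; and (iv) positive-definiteness, $\overline{H}(p) > 0$ for $p \ne 0$ (together with $\overline{H}(0)=0$).

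First I would handle homogeneity and subadditivity, which are the "soft" axioms. For homogeneity with $t > 0$: given $\phi \in S$, the rescaled function $t\phi$ also lies in $S$ (the defining conditions in~\eqref{eq:discrete-set-S-for-variationalf-formula} are linear in $\phi$), and the inner expression scales by $t$ since the numerator $-\mathcal{D}(t\phi) - (tp)\cdot\alpha = t(-\mathcal{D}\phi - p\cdot\alpha)$; taking the infimum over $\phi$ (equivalently over $t\phi$) gives $\overline{H}(tp) = t\overline{H}(p)$, and the $t=0$ case follows by continuity or directly since $\phi \equiv 0 \in S$ forces $\overline{H}(0) \le 0$, while coercivity/positivity from below (see item (iv)) gives $\overline{H}(0) = 0$. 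For subadditivity: pick near-optimal $\phi_1$ for $p$ and $\phi_2$ for $q$; then $\phi_1 + \phi_2 \in S$, and for each fixed $x,\alpha,\w$ the fraction for $p+q$ at $\phi_1+\phi_2$ splits as the sum of the two fractions (same denominator $\tau(x,\alpha,\w)$), so the pointwise $\sup_\alpha$, then $\sup_x$, then $\esssup_\w$ are each subadditive; passing $\phi_1+\phi_2$ through the outer infimum yields $\overline{H}(p+q) \le \overline{H}(p) + \overline{H}(q)$.

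Next, symmetry. Since $A = \{\pm e_1,\dots,\pm e_d\}$ is symmetric under $\alpha \mapsto -\alpha$, I would exhibit, for each $\phi \in S$, a companion $\psi \in S$ realizing the same value at $-p$ that $\phi$ realizes at $p$. The natural candidate is $\psi(x,\w) = -\phi(x,\w)$ together with a reindexing of the $\sup_\alpha$: indeed $-\mathcal{D}\psi(x,\alpha) - (-p)\cdot\alpha = \mathcal{D}\phi(x,\alpha) + p\cdot\alpha = -\bigl(\mathcal{D}\phi(x+\alpha,-\alpha) \cdot(\text{sign})\bigr)$ — more precisely $\mathcal{D}\phi(x,\alpha) = -\mathcal{D}\phi(x+\alpha,-\alpha)$, so the numerator at $(\psi,-p,x,\alpha)$ equals the numerator at $(\phi,p,x+\alpha,-\alpha)$, while the denominators need to be matched; here the cleanest route in the directed/general setting is to use the fact (from Section~\ref{sec:generalization}) that $\tau$ need not satisfy $\tau(x,\alpha)=\tau(x+\alpha,-\alpha)$, so instead I would argue at the level of the homogenized object: $\overline{H}$ is the dual norm of $m$ by Corollary~\ref{cor:H-is-the-dual-norm-of-time-constant}, and symmetry of $\overline{H}$ is equivalent to symmetry of $m$, which in turn follows from the homogenization theorem applied to the reversed percolation. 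I expect this symmetry step to be the main obstacle, because the edge-weights are directed and a priori not reversal-symmetric; the resolution is that symmetry of $\overline{H}$ need only hold as a consequence of $\overline{H}$ being $1$-homogeneous, convex, and vanishing only at $0$ — wait, that is false in general. So the correct fix is to observe that the variational formula already gives $\overline{H}(p) \ge 0$ with equality iff $p=0$ (item iv), and $1$-homogeneity plus convexity make $\overline{H}$ a \emph{gauge}; whether it is a genuine norm (symmetric) requires an honest argument, which I would supply by noting that replacing $\phi$ with $-\phi$ maps $S$ to $S$ and sends the quantity being optimized at $p$ to $\esssup_\w \sup_x \sup_\alpha \bigl(\mathcal{D}\phi(x,\alpha) + p\cdot\alpha\bigr)/\tau(x,\alpha)$; writing $\mathcal{D}\phi(x,\alpha) = -\mathcal{D}\phi(x+\alpha,-\alpha)$ and relabeling $y = x+\alpha$, $\beta = -\alpha$ turns this into $\esssup_\w \sup_y \sup_\beta \bigl(-\mathcal{D}\phi(y,\beta) + p\cdot(-\beta)\bigr)/\tau(y+\beta,-\beta)$, and under the translation-reindexing together with the substitution $\tilde\tau(y,\beta):=\tau(y+\beta,-\beta)$ one recognizes this as the formula for the reversed-weight Hamiltonian evaluated at $-p$; since the time-constant, and hence $\overline{H}$, is the same for the reversed weights (the collection of paths and total weights is identical under path-reversal), we get $\overline{H}(-p)=\overline{H}(p)$.

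Finally, positive-definiteness. The upper bound $\overline{H}(0) \le 0$ comes from $\phi \equiv 0 \in S$ and $\sup_\alpha (0 - 0)/\tau = 0$; combined with the lower bound below this gives $\overline{H}(0)=0$. For the strict lower bound when $p \ne 0$: for any $\phi \in S$ I would use the mean-zero condition $E[\mathcal{D}\phi(x,\alpha)] = 0$. Fix $\alpha$ with $p\cdot\alpha \le -|p|_\infty \cdot 1 < 0$ possible? Rather: choose $\alpha_* \in A$ maximizing $-p\cdot\alpha$, so $-p\cdot\alpha_* = |p|_1 > 0$ (over $\pm e_i$). Since $E[\mathcal{D}\phi(x,\alpha_*)]=0$, on a positive-measure set of $\w$ we have $\mathcal{D}\phi(x,\alpha_*,\w) \le 0$, hence $-\mathcal{D}\phi(x,\alpha_*,\w) - p\cdot\alpha_* \ge |p|_1 > 0$ there, and dividing by $\tau \le b$ gives the inner quantity $\ge |p|_1/b$ on that set; therefore $\esssup_\w \sup_x \sup_\alpha(\cdots) \ge |p|_1/b$ for every $\phi$, so $\overline{H}(p) \ge |p|_1/b > 0$. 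Assembling (i)–(iv): $\overline{H}$ is nonnegative, vanishes only at $0$, is positively homogeneous, symmetric, and subadditive — i.e., a norm on $\R^d$. $\qed$
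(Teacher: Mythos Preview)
Your arguments for homogeneity and positive-definiteness are essentially the paper's (with one slip: $\sup_{\alpha\in A}(-p\cdot\alpha)=|p|_\infty$, not $|p|_1$, since $A=\{\pm e_i\}$; the paper gets the same bound $\overline{H}(p)\ge|p|_\infty/b$). Your subadditivity argument is different and cleaner than the paper's: you use $\phi_1+\phi_2\in S$ directly in the variational formula, while the paper instead invokes the representation $-\mu(x,t)/t\to\overline{H}(p)$ and splits the finite-horizon sup; both work, but yours avoids the detour through $\mu$.

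The real issue is the symmetry step. The paper does \emph{not} prove $\overline{H}(-p)=\overline{H}(p)$; it establishes only positive $1$-homogeneity, subadditivity, and positive-definiteness, and uses ``norm'' in the sense of an asymmetric norm (Minkowski gauge), which is all that is needed for the polar-duality in Corollary~\ref{cor:H-is-the-dual-norm-of-time-constant} and the Hopf-Lax formula in Proposition~\ref{prop:Hopf-Lax-formula-for-u}. Your attempt to prove symmetry is circular in the general directed setting: path-reversal gives $\mathcal{T}^{\tau}(x,y)=\mathcal{T}^{\tilde\tau}(y,x)$, hence (via stationarity) $m^{\tau}(x)=m^{\tilde\tau}(-x)$, not $m^{\tilde\tau}(x)$; so the claim ``the time-constant is the same for the reversed weights'' is exactly the symmetry you are trying to prove. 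In fact, for genuinely directed weights $\overline{H}$ need not be symmetric. You should simply drop this axiom and state, as the paper effectively does, that $\overline{H}$ is a gauge.
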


Let $L(x)$ be the dual norm of $\overline{H}(p)$ on $\R^d$. Consider the set of paths
\[ 
  \mathcal{A} := \left\{g \in C^1([0,\infty),\R^d) : L(g'(s)) = 1 ~\forall~ s \in [0,t)\right\}. 
\]
  Let $T(x)$ be the minimum time function defined by~\Eqref{eq:continuousFPT-definition}. From standard optimal-control theory~\citep{bardi_optimal_1997}, it follows that $T(x)$ is the unique viscosity solution of the metric HJB equation~\Eqref{eq:time-constant-satisfies-a-PDE}. A standard Hopf-Lax formula gives 
\begin{prop}
  $T(x) = L(x)$.
  \label{prop:Hopf-Lax-formula-for-u}
\end{prop}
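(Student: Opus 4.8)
The plan is to show both inequalities $T(x) \le L(x)$ and $T(x) \ge L(x)$ directly from the optimal-control definition of $T$ in~\Eqref{eq:continuousFPT-definition}, exploiting the fact that $L$ and $\overline{H}$ are dual norms. Recall that for dual norms we have the Cauchy--Schwarz-type bound $p \cdot v \le L(v)\,\overline{H}(p)$ for all $p,v \in \R^d$, with equality attainable in $p$ for each fixed $v$ (and vice versa). The key structural observation is that with the admissible set $\mathcal{A} = \{g \in C^1([0,\infty),\R^d) : L(g'(s)) = 1\}$, a path from $x$ to $0$ in time $s$ must satisfy $0 - x = \int_0^s g'(r)\,dr$, and since $L$ is a norm, $L(x) = L\!\left(\int_0^s g'(r)\,dr\right) \le \int_0^s L(g'(r))\,dr = s$. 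Taking the infimum over admissible $g$ gives $T(x) \ge L(x)$.

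For the reverse inequality $T(x) \le L(x)$, I would exhibit a near-optimal path: take the straight-line segment $g(r) = x - r\,\frac{x}{L(x)}$ for $r \in [0, L(x)]$, so that $g(0) = x$, $g(L(x)) = 0$, and $g'(r) = -x/L(x)$ has $L(g'(r)) = L(x)/L(x) = 1$. This $g$ lies in $\mathcal{A}$ (it is $C^1$, in fact affine) and reaches the origin in time exactly $L(x)$, so $T(x) \le L(x)$. Combining the two bounds gives $T(x) = L(x)$. One caveat: if $x = 0$ the statement is trivial ($T(0) = 0 = L(0)$), so assume $x \ne 0$, in which case $L(x) > 0$ because $L$ is a genuine norm (this uses $\overline{H}(p) = 0 \iff p = 0$, hence nondegeneracy of the dual norm, as noted after~\Thmref{thm:discrete-variational-formula-for-H-bar} and in~\Propref{prop:limiting-hamiltonian-is-a-norm}).

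The only real subtlety is reconciling this elementary computation with the claim in the surrounding text that $T(x)$ is the unique viscosity solution of the metric HJB equation $\overline{H}(DT) = 1$, $T(0) = 0$: one should check that the Hamiltonian associated to the control set $\{v : L(v) = 1\}$ via the support-function construction in~\Eqref{eq:continuous-hamiltonian-definition}--\Eqref{eq:H-for-general-continuous-FPP} is indeed $\overline{H}$, i.e. that $\sup_{L(v) = 1} p \cdot v = \overline{H}(p)$, which is precisely the definition of the dual norm $L$ of $\overline{H}$ read backwards. Given that identification, the statement $T = L$ is just the Hopf--Lax formula for a spatially homogeneous metric Hamiltonian, and the direct argument above \emph{is} the Hopf--Lax computation in this special case. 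I expect no genuine obstacle here; the main point to state carefully is the duality equality $L(x) = \sup_{\overline{H}(p) = 1} p\cdot x$ together with its converse $\overline{H}(p) = \sup_{L(x) = 1} p \cdot x$, which makes both inequalities transparent and simultaneously proves~\Corref{cor:H-is-the-dual-norm-of-time-constant}.
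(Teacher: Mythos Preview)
Your proposal is correct and takes essentially the same approach as the paper: the upper bound $T(x)\le L(x)$ via the straight-line path and the lower bound $T(x)\ge L(x)$ via the triangle inequality for $L$ applied to $\int_0^s g'(r)\,dr$ are exactly what the paper's two-sentence proof sketches. Your write-up simply supplies the details (and correctly flags the implicit use of $L(-x)=L(x)$, which is fine since $L$ is a norm).
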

The fact that $\overline{H}(p)$ is the dual norm of $m(x)$ follows immediately, and \Corref{cor:H-is-the-dual-norm-of-time-constant} is proved.

\begin{remark}
  It is natural to question the necessity of taking the discrete problem into the continuum; the PDE is unnecessary once the Hamiltonian has been identified as the dual norm of the time-constant\footnote{observation due to S.R.S Varadhan.}. 

  There is a direct proof based on the continuum homogenization theorem of~\citet{kosygina_stochastic_2006}. Such a route has been taken to prove variational formulas for the large deviations of random walks in random environments by~\citet{rosenbluth_quenched_2008}. This work was considerably generalized by~\citet{rassoul-agha_quenched_2013} and \citet{rassoul-agha_quenched_2014}. Subsequently,~\citet{georgiou_variational_2013} extended these ideas to prove variational formulas for the directed random polymer, and for last-passage percolation.
\end{remark}

\begin{remark}
  We chose the metric-form of the HJB equation so that the limiting Hamiltonian $\overline{H}(p)$ could be interpreted as a norm. This allowed us to solve the PDE for the time-constant. We can make the assumption on the edge-weights $\tau(x,\alpha,\w)$ in~\Eqref{eq:basic-assumption-on-edge-weights} less restrictive if the Hamiltonian is written in the more standard form
  \[ 
    \mathcal{H}(p,x) = \sup_{\alpha \in A} \left\{-p\cdot\alpha - \tau(x,\alpha) \right\}. 
  \]
  Here, $\tau(x,\alpha,\w)$ can take the value $0$ without making the Hamiltonian blow-up. However, even though the homogenization theorem and variational formula are still valid, we do not know the Hopf-Lax formula for the limiting PDE.
\end{remark}

The proofs of the results in this section can be found in~\Chapref{chap:proofs-related-to-discrete-variational-formula}.

\chapter{Proofs related to the discrete homogenization theorem} 
\label{chap:proofs-related-to-penalization-theorems}
\section{Fattening the unit-cell}
In this section, all the estimates will hold almost surely, and hence we'll not explicitly refer to $\w \OR \W$. It will be useful for the reader to visualize the lattice as being embedded in $\R^d$. Discrete paths on the lattice will now be allowed to wander away from the edges of the graph on $\Z^d$ and into $\R^d$. This will let us define continuum variational problems ---what we've called \deltaapprox s in~\Secref{sec:outline-of-paper}--- that will approximate discrete \FPP~and its associated cell-problem.

Consider a unit-cell of the lattice embedded in $\R^d$. We will ``fatten'' the edges and vertices of the lattice into tubes and corners. The remaining space in the unit-cell contains its center point, and so we will call this region a center (see~\Figref{fig:sketch-of-fattened-unit-cell}). Let $0 < \delta < 1/2$ be a parameter describing the size of the tubes and corners. Define \begin{enumerate}
  \item The \emph{tube} at $x \in \Z^d$ in the $\alpha \in A$ direction as:  
    \[ 
      TU^{\delta}_{x,\alpha} := \big\{ x + \lambda \alpha + y:\delta < \lambda < 1 - \delta,~y \in \{\alpha\}^{\perp},~|y|_{\infty} \leq \delta\big\}.
    \]
    The tubes have width $2\delta$ and length $1-2\delta$. 
  \item The \emph{corner} around a vertex $x$ as:
\[ 
  CO^{\delta}_{x} := \{ y:|y-x|_{\infty} \leq \delta \}.
\]
  \item The \emph{center} of the cell as:
\[ 
  CE^{\delta}_x := \bigg\{ x + \sum_{i=1}^d \lambda_i e_i : \delta < \lambda_i < 1 - \delta, ~\forevery~ i \bigg\}.
\]
\end{enumerate}
The three regions are disjoint. That is, for any $x,y,z \in \R^d$ and $\alpha \in A$, 
\[
  TU_{x,\alpha}^{\delta} \cap CO_y^{\delta} \cap CE^{\delta}_z = \emptyset .
\]
\begin{figure}[t]
  \begin{center}
    \includegraphics[width=0.65\textwidth]{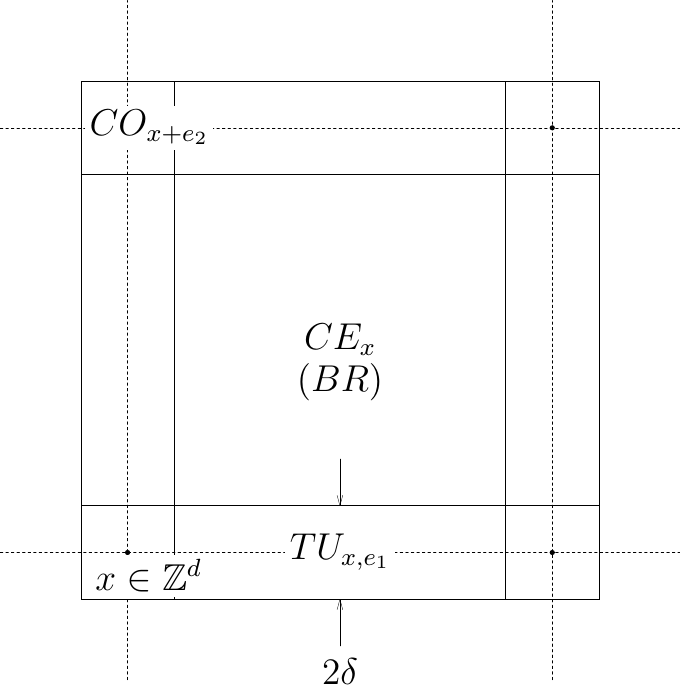}
  \end{center}
    \caption{Sketch of fattened unit-cell. The dotted lines represent the edges of the lattice. The solid lines show the boundaries of the corners, tubes and centers. When $\delta$ is expanded to $\eta$, the center becomes a bad region.}
    \label{fig:sketch-of-fattened-unit-cell}
\end{figure}

Next, we need to define the edge-weight function $t^{\delta}(x,\alpha)$ and the cell-problem running cost $l^{\delta}(x,\alpha)$ in the \deltaapprox . We'd like the value functions in the \deltaapprox~to be close to the discrete problem, and so we would like paths to avoid the centers of cells and stick close to the edges. By penalizing paths that cross into centers of cells with additional cost, we'll ensure that they stay inside the tubes and corners. For each $\alpha \in A$, let
\begin{equation}
  t^{\delta}_c(x,\alpha) := \left\{ 
  \begin{array}{cl}
    \tau(x,\alpha) & \text{if } x \in TU^{\delta}_{z,\alpha} \text{ for some } z \in \Z^d \\
    a   & \text{if } x \in CO_z \text{ for some } z \in \Z^d \\
    \delta^{-1}   & \text{otherwise}
  \end{array} \right. .
  \label{eq:time-edge-weights-tau}
\end{equation}

Fix $p \in \R^d$, and let $\lambda(x,\alpha) = p\cdot\alpha$ be the discrete cell-problem weight  defined in~\Eqref{eq:running-costs-for-discrete-cell-problem}. We define the running costs as
\begin{equation}
  l^{\delta}_c(x,\alpha) = \left\{ 
  \begin{array}{cl}
    \lambda(x,\alpha) & \text{if } x \in TU_{z,\alpha} \text{ for some } z \in \Z^d\\
    -C & \text{if } x \in CO_z \text{ for some } z \in \Z^d \\
    \delta^{-1} & \text{otherwise}
  \end{array} \right. ,
  \label{eq:running-cost-per-unit-time-l-extension}
\end{equation}
  where $C = -|p|_{\infty} $ is the lower bound on $\lambda(x,\alpha)$. The functions $t^{\delta}_c$ and $l^{\delta}_c$ represent piecewise extensions of the discrete edge-weights and  running costs. Define the mollified functions
\[ 
  l^{\delta}(x,\alpha) = \eta_{\delta/2} * l^{\delta}_c(x,\alpha) \quad \AND \quad t^{\delta}(x,\alpha) = \eta_{\delta/2} * t^{\delta}_c(x,\alpha),
\]
where $\eta_{\delta}$ is the standard mollifier with support $\delta$. The Hamiltonian $H^{\delta}$ obtained from $l^{\delta}$ and $t^{\delta}$ using~\Eqref{eq:continuous-hamiltonian-definition} satisfies the hypotheses of the Lions-Souganidis continuum homogenization theorem (\Thmref{thm:lions-homogenization-theorem}). 

For the finite time-horizon cell-problem, there is a final cost given by $\mu_0\colon\Z^d \to \R$. We can extend the final cost smoothly to $\R^d$ by defining 
\[
  u_0^{\delta}(x) = \eta_{\delta/2} * \mu_0([x]).
\]
The continuous variational problems in~\Secref{sec:optimal-control-theory} can now be defined using the smooth functions $l^{\delta}$, $t^{\delta}$ and $u_0^{\delta}$.

The main focus of this section is~\Thmref{thm:commutation-theorem-for-u-and-T}. We will first prove it for when $\beta = \mu(x,s) \AND b^{\delta} = u^{\delta}(x,s)$, the finite time-horizon cell-problems. 
\begin{remark}
  Although trivial, we remark that 
  \[ u^{\delta}(x,s) \leq \mu(x,s),\]
    since we can always take paths going along edges in the \deltaapprox. 
    \label{rem:L-delta-smaller-than-L}
\end{remark}
Then, the proof of~\Thmref{thm:commutation-theorem-for-u-and-T} is easy given that 
\begin{lemma}
  for $\delta$ small enough, we have the estimate 
  \[ 
    \frac{\mu([nx],ns)}{n} - \frac{u^{\delta}([nx],ns)}{n} \leq C\left( \sqrt{\delta}s + \frac{1}{n}\right) .
  \]
  \label{lem:uniform-in-n-delta-estimate}
\end{lemma}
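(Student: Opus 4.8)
The plan is to obtain this as the harder half of a two-sided estimate. By \Remref{rem:L-delta-smaller-than-L} we already have $u^{\delta}([nx],ns)\le\mu([nx],ns)$, so it suffices to bound $\mu$ above by $u^{\delta}$ up to the stated error, i.e.\ to turn a near-optimal \emph{continuum} path for $u^{\delta}$ into a \emph{discrete} path for $\mu$ of nearly the same cost. Write $x_n=[nx]$, $s_n=ns$, and fix $g\colon[0,s_n]\to\R^d$ with controls $a(\cdot)$, $g(0)=x_n$, and
\[
  I^{\delta}(g):=\int_0^{s_n} l^{\delta}(g(\sigma),a(\sigma))\,d\sigma+u_0^{\delta}(g(s_n))\le u^{\delta}(x_n,s_n)+1 .
\]
The continuum path that runs along the edges of the graph shows $u^{\delta}(x_n,s_n)\le Cs_n$, so $g$ also obeys the a priori bound $I^{\delta}(g)\le Cs_n+1$; this quantitative control is the tool that lets us discard the parts of $g$ with no lattice analogue.

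The main step is to put $g$ in a normal form. Running longitudinally through a tube $TU^{\delta}_{z,\alpha}$, $g$ has speed $\approx 1/\tau(z,\alpha)$ and running-cost rate $\approx\lambda(z,\alpha)/\tau(z,\alpha)$ --- this is exactly how $t^{\delta}_c,l^{\delta}_c$ in \Eqref{eq:time-edge-weights-tau}--\Eqref{eq:running-cost-per-unit-time-l-extension} were designed, so that traversing $(z,z+\alpha)$ accrues parameter-time $\approx\tau(z,\alpha)$ and cost $\approx\lambda(z,\alpha)=p\cdot\alpha$; and in corners it moves fast and with nonnegative, bounded cost rate $-C$, so it does not dwell there. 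The only remaining possibilities are meandering through a center or moving transversally inside a tube with a control not aligned to it, and on both of those sets $t^{\delta},l^{\delta}\approx\delta^{-1}$ (away from the $\delta/2$-wide mollification layers), so $g$ moves there at speed $\approx\delta$ and pays running cost at rate $\approx\delta^{-1}$, while everywhere else the rate is $\ge-|p|_\infty/a$. Feeding this lower bound on $I^{\delta}(g)$ against the a priori upper bound forces the total parameter-time $g$ spends in centers or moving transversally to be $O(\delta s_n)$, and the Euclidean distance covered there to be $O(\delta^2 s_n)$. Deleting those portions and re-splicing through the nearest corners changes $I^{\delta}(g)$ by $O(\delta s_n)$ (using $\Norm{\mu_0}{Lip}<\infty$ for the terminal term), and the mollification layers contribute a further $O(\delta)$ per traversed edge, hence $O(\delta s_n)$ overall.

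What remains is a path concatenating longitudinal crossings of tubes $TU^{\delta}_{z_1,\alpha_1},TU^{\delta}_{z_2,\alpha_2},\ldots$ with $z_{i+1}=z_i+\alpha_i\in\Z^d$, $\alpha_i\in A$, at most the last incomplete; reading off the $\alpha_i$ defines a lattice path $\gamma$ with $\gamma(0)=x_n$. Two points must be handled. The weight budget: crossing $TU^{\delta}_{z_i,\alpha_i}$ with the adjacent corners costs $g$ parameter-time $\tau(z_i,\alpha_i)(1-2\delta)+O(\delta)$, so $\mathcal{W}(\gamma)=\sum\tau(z_i,\alpha_i)\le s_n(1+C\delta)$; since the number $k$ of full crossings is $\le 2s_n/a$ (for $\delta$ small), dropping the last $m=O(\delta s_n)+O(1)$ steps restores $\mathcal{W}(\gamma)\le s_n$ at a cost of at most $m(|p|_\infty+\Norm{\mu_0}{Lip})=O(\delta s_n)+O(1)$. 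The cost comparison: along $TU^{\delta}_{z_i,\alpha_i}$, $g$ accrues $\lambda(z_i,\alpha_i)(1-2\delta)$, each corner passage contributes $O(\delta)$ in absolute value, the incomplete final tube contributes $O(1)$, and $|u_0^{\delta}(g(s_n))-\mu_0(\gamma(k))|\le\Norm{\mu_0}{Lip}|g(s_n)-\gamma(k)|+O(\delta)=O(\delta s_n)+O(1)$; so the discrete objective of $\gamma$ differs from $I^{\delta}(g)$ by $O(\delta s_n)+O(1)$. Combining,
\[
  \mu(x_n,s_n)\le I^{\delta}(g)+O(\delta s_n)+O(1)\le u^{\delta}(x_n,s_n)+O(\delta s_n)+O(1),
\]
and dividing by $n$ gives $\mu([nx],ns)/n-u^{\delta}([nx],ns)/n\le C(\delta s+1/n)$, which implies the claimed bound.

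The obstacle is concentrated in the normal-form step: a near-optimal continuum path may in principle cut diagonally through cell interiors or drift sideways inside tubes, behaviors with no counterpart on $\Z^d$, and the work is to convert the coercivity built into $t^{\delta},l^{\delta}$ (the $\delta^{-1}$ penalties of \Eqref{eq:time-edge-weights-tau}--\Eqref{eq:running-cost-per-unit-time-l-extension}) together with the competitor bound $I^{\delta}(g)\le Cs_n+1$ into the quantitative assertion that $g$ is, up to $O(\delta s_n)$ of parameter-time and displacement, an honest concatenation of straight tube-crossings. Once this normal form is available, the projection to $\gamma$, the truncation enforcing the weight budget, and the $O(\delta)$-per-edge bookkeeping for corners, mollification layers and the $(1-2\delta)$ tube-length factor are all routine; each error is in fact $O(\delta)$ per unit time, so the $\sqrt{\delta}$ in the statement is a comfortable over-estimate. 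A minor technical point is that the mollified weights near tube/corner interfaces are not exactly $\tau$ or $a$, but they stay in $[a,b]$ there because mollification cannot exceed the local maximum of $t^{\delta}_c$, which lies in $[a,b]$ on both sides of such an interface.
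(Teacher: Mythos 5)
Your overall skeleton is the same as the paper's (one side from \Remref{rem:L-delta-smaller-than-L}, the other by turning a near-optimal continuum path into an edge path and doing bookkeeping per cell), but the core quantitative step of your normal form is miscalculated, and the step you yourself call ``the obstacle'' is exactly the one you never carry out. In the penalized regions (centers, and transversal motion inside a tube) both $t^{\delta}$ and $l^{\delta}$ are of order $\delta^{-1}$ \emph{per unit length}, so the path moves there at speed of order $\delta$ and pays running cost per unit parameter-time of order $l^{\delta}/t^{\delta}\approx 1$, not $\delta^{-1}$. Hence the budget comparison you invoke only gives: Euclidean length spent in penalized regions is $O(\delta s_n)$ (which already follows from the time constraint alone), while the parameter-time spent there can be of order $s_n$ and the displacement of order $\delta s_n$; your claims of time $O(\delta s_n)$ and distance $O(\delta^2 s_n)$ are each off by a factor of $\delta$, and they are what your endpoint/terminal-cost bound and splicing estimate are quoted from. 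Even with the corrected bounds, the deletion-and-resplicing is not routine: an excised transversal excursion inside a tube cannot be re-spliced by an admissible path in that tube without paying the same $\delta^{-1}$ transversal penalty (an $O(1)$ cost per splice, with possibly order $s_n$ splices), and consecutive fully-crossed tubes need not be adjacent lattice edges, so reading off $\gamma$ requires an argument that the interpolating edges total only $O(\delta s_n)$. Finally, your accounting ``longitudinal crossing costs $\approx\tau$, $\approx p\cdot\alpha$'' is false for the mollified $t^{\delta},l^{\delta}$ within $\delta/2$ of the \emph{inner} (tube/center) boundary, where they are of order $\delta^{-1}$; this is what the paper's first step, the comparison $t^{2\delta}_c\le t^{\delta}$, $l^{2\delta}_c\le l^{\delta}$ reducing to piecewise weights on doubled tubes, is for, and your closing remark only addresses tube/corner interfaces, not tube/center ones.

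For comparison, the paper handles the deferred step with a specific device: after reducing to the piecewise $2\delta$-problem, it carves bad regions a distance $\sqrt{\delta}$ inside each center, so that any visit to them costs at least $\delta^{-1/2}$ in both time and cost and can therefore be excised and rerouted through tubes and corners at $O(1)$ expense without increasing either; it then fattens tubes and corners to thickness $\eta=2\delta+\sqrt{\delta}$ so the surviving path is confined to them, and Claim~\ref{claim:comparing-paths-deltapprox-to-edge-paths} projects it cell by cell onto an edge path with error $C\eta$ per cell, over at most $2s/a$ cells --- this is precisely where the $\sqrt{\delta}$ in the statement comes from. Your assertion that the $\sqrt{\delta}$ is ``a comfortable over-estimate'' therefore rests on the erroneous rates; a repaired version of your deletion/splicing scheme (tracking penalized \emph{length} rather than time, counting vertex changes during gaps, and working against the piecewise $2\delta$ weights) may well be viable and would be a genuinely different bookkeeping than the paper's, but as written the proposal's key estimates are wrong and its hard step is asserted rather than proved.
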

We will complete the proof of the main theorem before proving~\Lemref{lem:uniform-in-n-delta-estimate}. 
\begin{proof}[Proof of~\Thmref{thm:commutation-theorem-for-u-and-T}]
  While the homogenization theorem applies directly to $u^{\delta}(x,s)$, our scaling in~\Eqref{eq:scaling-for-finite-time-horizon-problem} is slightly different. We scaled it differently so that it is enough to compare the discrete problem $\mu(z,s)$ to the continuum problem $u^{\delta}(z,s)$ on lattice points $z \in \Z^d$. So we account for this first. Since $l^{\delta}$ is bounded above by $\delta^{-1}$, we have
  \[ 
    \left| \frac{u^{\delta}([nx],ns)}{n} - \frac{u^{\delta}(nx,ns)}{n} \right| \leq \frac{Cs}{\delta n}. 
  \]
    It follows from~\Propref{prop:convergence-of-u-to-H-p} (or the continuum homogenization theorem) that $u^{\delta}(nx,ns)/n$ has a limit, and hence
  \[ 
    \lim_{n \to \infty} \frac{u^{\delta}([nx],ns)}{n} = \lim_{n \to \infty} \frac{u^{\delta}(nx,ns)}{n} =: \overline{u}^{\delta}(x,s). 
  \]
  From~\Lemref{lem:uniform-in-n-delta-estimate} and~\Remref{rem:simple-observation-that-will-help-us-work-with-piecewise-functions}, we have the following inequality for the scaled functions~\Eqref{eq:scaling-for-finite-time-horizon-problem}
  \begin{equation}
    u^{\delta}_n(x,s) \leq \mu_n(x,s) \leq u^{\delta}_n(x,s) + C\left( \sqrt{\delta}s  + \frac{1}{n} \right) \quad \forall x \in \R^d.
    \label{eq:squeeze-mu-between-u-and-c-times-delta}
  \end{equation}
  Taking a limit in $n$ first, and then in $\delta$ (limsups and liminfs as appropriate), we get
  \[
  \begin{split}
    \overline{u}^{\delta}(x,s) \leq & \varliminf_{n \to \infty} \mu_n(x,s),\\
    & \varlimsup_{n \to \infty} \mu_n(x,s) \leq \overline{u}^{\delta}(x,s) + C\sqrt{\delta} s.
  \end{split}
  \]
  Since $\delta$ is arbitrary, it follows that $\mu_n(x,s) \to \overline{u}(x,s)$ as $n \to \infty$. Taking limits in the reverse order and using the fact that $\mu_n(x,s)$ has a limit completes the proof. 
\end{proof}

\section{Integral formulation of variational problem}
It's easier to work with an integral formulation of \FPP~and its cell-problem that will allow us to drop reference to the control $a(s)$ in~\Eqref{eq:cost-functional}. This is easily done by extending $l^{\delta}(x,\alpha)$ and $t^{\delta}(x,\alpha)$ one-homogeneously from $\R^d \times A$ to $\R^d \times \R^d$. For $x,r \in \R^d$ we redefine
\begin{equation}
  \begin{split}
    l^{\delta}(x,r) & = l^{\delta}\left( x, \frac{r}{|r|} \right)|r|, \\
    t^{\delta}(x,r) & = t^{\delta}\left( x, \frac{r}{|r|} \right)|r|.
  \end{split}
  \label{eq:running-cost-and-time-one-homo-extension}
\end{equation}

Hence, we may write the total cost of a path parametrized by $g\colon[0,s]\to\R^d$ (see~\Eqref{eq:cost-functional}) as
\begin{equation}
  I^{\delta}(g) = \int_0^s l^{\delta}(g(r),g'(r))dr.
  \label{eq:cost-functional-definition}
\end{equation}
We have to prove that dropping reference to the control in~\Eqref{eq:cost-functional-definition} will not affect our problem. That is we've to show that if we take a path $g$ realized by a control and reparametrize it, its total cost will be unaffected. Let $r = h(q)$ be a smooth change of time parametrization, where $h$ is an increasing function. Let $y(r) = g(h^{-1}(r))$ be the path. Then, a simple change of variable gives
\begin{prop}[Cost is independent of path parametrization]
  \[ I^{\delta}(g) = \int_0^s l^{\delta}(g(q),g'(q))dq = \int_0^{h(s)} l^{\delta}(y(r),y'(r))dr.\]   
  \label{prop:cost-is-independent-of-time-parametrization}
\end{prop}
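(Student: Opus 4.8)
The statement is: with $l^\delta$ extended one-homogeneously in its second argument as in~\eqref{eq:running-cost-and-time-one-homo-extension}, and $y(r) = g(h^{-1}(r))$ where $r = h(q)$ is a smooth increasing change of variable, we have $\int_0^s l^\delta(g(q),g'(q))\,dq = \int_0^{h(s)} l^\delta(y(r),y'(r))\,dr$. This is essentially a one-line change of variables, and the whole point of the one-homogeneous extension is to make it work.

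The plan is as follows. First I would write $q = h^{-1}(r)$, so $dq = (h^{-1})'(r)\,dr = \frac{1}{h'(h^{-1}(r))}\,dr$, and as $q$ ranges over $[0,s]$, $r$ ranges over $[h(0),h(s)] = [0,h(s)]$ (using the normalization $h(0)=0$, or otherwise shifting so the endpoints match). Substituting into the left-hand integral gives
\[
  \int_0^s l^\delta(g(q),g'(q))\,dq = \int_0^{h(s)} l^\delta\!\bigl(g(h^{-1}(r)),\, g'(h^{-1}(r))\bigr)\,\frac{1}{h'(h^{-1}(r))}\,dr.
\]
Next I would compute $y'(r)$ by the chain rule: $y'(r) = g'(h^{-1}(r))\cdot (h^{-1})'(r) = g'(h^{-1}(r))/h'(h^{-1}(r))$. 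Since $h$ is increasing, $h' > 0$, so the scalar factor $1/h'(h^{-1}(r))$ is positive; call it $c(r) > 0$. Then $y'(r) = c(r)\, g'(h^{-1}(r))$ and $y(r) = g(h^{-1}(r))$, so the integrand on the right-hand side is $l^\delta(y(r), y'(r)) = l^\delta\bigl(g(h^{-1}(r)),\, c(r)\,g'(h^{-1}(r))\bigr)$.

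Now I would invoke the one-homogeneity of $l^\delta$ in the second argument: by the definition in~\eqref{eq:running-cost-and-time-one-homo-extension}, $l^\delta(x, \lambda r) = \lambda\, l^\delta(x,r)$ for any $\lambda > 0$ (this is immediate since $\lambda r/|\lambda r| = r/|r|$ and $|\lambda r| = \lambda|r|$). Applying this with $\lambda = c(r)$ gives $l^\delta\bigl(g(h^{-1}(r)), c(r)\,g'(h^{-1}(r))\bigr) = c(r)\, l^\delta\bigl(g(h^{-1}(r)), g'(h^{-1}(r))\bigr)$, which is exactly the integrand appearing in the change-of-variables formula above. Hence the two integrals agree, and the middle equality $\int_0^s l^\delta(g(q),g'(q))\,dq = I^\delta(g)$ is just the definition~\eqref{eq:cost-functional-definition}.

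There is essentially no obstacle here; the only points requiring a word of care are (i) the positivity of $c(r)$, which is why we need $h$ increasing rather than merely monotone or invertible — one-homogeneity only gives $l^\delta(x,\lambda r) = \lambda l^\delta(x,r)$ for $\lambda > 0$, not for negative scalars; and (ii) regularity, namely that $g$ is at least $C^1$ (or Lipschitz, in which case the computation is done a.e.\ and the change of variables is justified by absolute continuity) and $h$ is a $C^1$ increasing bijection of the relevant intervals, so that all the derivatives and the substitution are legitimate. With those standing assumptions the proof is the short computation above.
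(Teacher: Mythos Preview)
Your proof is correct and is exactly the ``simple change of variable'' the paper invokes without spelling out: substitute $q = h^{-1}(r)$, compute $y'(r)$ by the chain rule, and use the positive one-homogeneity of $l^\delta$ in its second argument from~\eqref{eq:running-cost-and-time-one-homo-extension}. Your remarks on why $h$ must be increasing (so the scalar factor is positive) and on the regularity needed for the substitution are apt and make explicit what the paper leaves implicit.
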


Since we can dispense with the controls and talk directly about paths, define
\[ 
  U := \left\{ g(s) \in C^{0,1}([0,1],\R^d): \frac{g'(s)}{|g'(s)|} \in A ~\text{a.e. } \Leb[0,1] \right\}.
\]
This restricts us to paths that can be ``realized'' with controls. Let $U_x$ be the subset of paths that start at $x$, and let $U_{x,y}$ be the subset of paths that go from $x$ to $y$. Let $\mathcal{U}$, $\mathcal{U}_x$ and $\mathcal{U}_{x,y}$ be the corresponding subsets of $U$ where paths are only allowed to go on the edges of the lattice $\Z^d$.

The total-time or weight of a path $g \in U$ is
\begin{equation}
  W^{\delta}(g) = \int_0^1 t^{\delta}\left( g(r),\frac{g'(r)}{|g'(r)|} \right)|g'(r)|~dr. 
  \label{eq:time-functional-definition}
\end{equation}
The cost of $g$ is given by~\Eqref{eq:cost-functional-definition} with $s=1$. Let the $L^1$ length of $g$ be 
\begin{equation}
  d(g) = \int_0^1 |g'(r)|~dr. 
  \label{eq:L1-length-of-a-continuous-path}
\end{equation}

This allows us the following reformulation of the discrete and continuous cell-problems: 
\begin{prop}
  For $x \in \R^d,~z \in \Z^d$  and $s \in \R^+$, we have
  \[ 
    \begin{split}
      u^{\delta}(x,s) & = \inf_{g \in U_x} \left\{ I^{\delta}(g) : W^{\delta}(g) \leq s \right\}, \\
      \mu(z,s) & = \inf_{\gamma \in \mathcal{U}_z} \left\{ I^{\delta}(\gamma) : W^{\delta}(\gamma) \leq s \right\}.
    \end{split}
  \]
  \label{prop:reformulation-of-control-problem-as-integral-variational-problem}
\end{prop}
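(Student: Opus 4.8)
The plan is to read each of the two equalities as asserting that the infimum on the left (over \emph{controls}, in the optimal-control sense of \Eqref{eq:finite-time-horizon-variational-problem} and \Eqref{eq:discrete-finite-time-horizon-problem}) and the infimum on the right (over \emph{admissible paths}, subject to the weight bound $W^\delta\le s$) range over the same data once one mods out by reparametrization. The cell-problem here is run with $\mu_0\equiv 0$, so no terminal term appears. The only nontrivial analytic ingredient is \Propref{prop:cost-is-independent-of-time-parametrization} for $I^\delta$, together with its verbatim analogue for $W^\delta$ (the identical change of variables).

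First, the continuum identity $u^\delta(x,s)=\inf_{g\in U_x}\{I^\delta(g):W^\delta(g)\le s\}$. I would prove two inequalities. For ``$\le$'': in the \deltaapprox\ the control dynamics is $g'=\alpha/t^\delta(g,\alpha)$ (see \Eqref{eq:special-form-of-l-and-f}), so a trajectory $g_{a,x}$ moves in direction $a(r)\in A$ with speed $1/t^\delta(g(r),a(r))$; since $|a(r)|=1$, along such a trajectory $g'(r)/|g'(r)|=a(r)$ and the $W^\delta$-integrand is identically $1$, so the $W^\delta$-weight accumulated up to control-time $r$ equals $r$, while, by the one-homogeneous extension used to define $I^\delta$, the $I^\delta$ accumulated equals the running cost accrued. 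Hence, given a control $a$, the curve $g_{a,x}$ restricted to $[0,s]$ and reparametrized onto $[0,1]$ lies in $U_x$, satisfies $W^\delta=s\le s$, and has $I^\delta$ equal to $I_{x,s}(a)$; taking the infimum over $a$ yields $\inf_{g\in U_x}\{I^\delta(g):W^\delta(g)\le s\}\le u^\delta(x,s)$. For ``$\ge$'': given $g\in U_x$ with $W^\delta(g)\le s$, reparametrize $g$ by its $t^\delta$-weighted arclength $\sigma(r)=\int_0^r t^\delta(g,g'/|g'|)\,|g'|$; because $g'/|g'|\in A$ a.e.\ we have $|g'|>0$ a.e., so $\sigma$ is a strictly increasing bijection of $[0,1]$ onto $[0,W^\delta(g)]$ and the re-timed curve is precisely the trajectory of the control $r\mapsto g'/|g'|\in A$, whose running cost over the horizon $W^\delta(g)$ is $I^\delta(g)$. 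Extending this control to all of $[0,s]$ by always selecting a direction $\alpha$ with $p\cdot\alpha\le 0$ (possible since $A=\{\pm e_i\}$ is symmetric) adds a nonpositive amount to the running cost, so $u^\delta(x,s)\le I_{x,s}(a)\le I^\delta(g)$, and the infimum over $g$ finishes this direction.

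Second, the discrete identity $\mu(z,s)=\inf_{\gamma\in\mathcal{U}_z}\{I^\delta(\gamma):W^\delta(\gamma)\le s\}$. Now the reparametrization is trivial — a combinatorial lattice path $z=v_0,v_1,\dots,v_k$ of \Eqref{eq:discrete-finite-time-horizon-problem} is the trace of the curve in $\mathcal{U}_z$ running along its successive edges — so the content is to (i) check that traversing a single lattice edge $(v,v+\alpha)$ as an element of $\mathcal{U}$ accumulates exactly the discrete data, i.e.\ the line integrals of $t^\delta$ and $l^\delta$ over that edge return $\tau(v,\alpha)$ and $\lambda(v,\alpha)=p\cdot\alpha$ respectively — this is the precise meaning of ``$t^\delta=\tau$ along the edges'' and is where the definitions \Eqref{eq:time-edge-weights-tau}, \Eqref{eq:running-cost-per-unit-time-l-extension} and the choice of mollifier are used — and (ii) reduce the infimum over $\mathcal{U}_z$ to genuine lattice paths, ruling out competitor curves that creep partway along an edge and turn back or that stop in the interior of an edge: since on an edge $l^\delta(\cdot,-\alpha)=-l^\delta(\cdot,\alpha)$ while $t^\delta>0$, deleting such excursions does not increase $I^\delta$ and does not increase $W^\delta$, and the $\inf_k$ of \Eqref{eq:discrete-finite-time-horizon-problem} matches the freedom of where the curve ends. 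Granting (i) and (ii), $W^\delta(\gamma)=\mathcal{W}_{z,k}$ and $I^\delta(\gamma)=\sum_i\lambda(v_i,v_{i+1}-v_i)$, so the two infima coincide.

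I expect the routine parts to be the changes of variables; the step that actually uses the construction of this chapter — and the one requiring the most care — is the discrete case, specifically the edge-by-edge bookkeeping of (i), that the line integrals of $t^\delta$ and $l^\delta$ over a unit lattice edge reproduce $\tau$ and $\lambda$ \emph{exactly} (and not merely up to an $O(\delta)$ correction), together with the reduction in (ii) to competitors that begin and end at lattice points.
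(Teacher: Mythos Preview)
Your approach is the same in spirit as the paper's: the paper's entire proof reads ``follows from~\Propref{prop:cost-is-independent-of-time-parametrization} and the definitions of $u$ and $\mu$,'' so the reparametrization invariance you invoke is exactly the intended mechanism, and your two-inequality treatment of the continuum identity is a reasonable unpacking of that one-liner. One small repair is needed in your ``$\ge$'' direction: your extension argument assumes the running cost is $p\cdot\alpha$, but $l^\delta$ equals $\delta^{-1}$ in centers and not $p\cdot\alpha$, so you cannot always choose a direction of nonpositive cost. This is easily fixed by first observing that any competitor ending in a center is improved by truncating the final center excursion (which has strictly positive cost and weight), so the infimum is unchanged if one restricts to paths ending in tubes or corners, where your extension does work.

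There is, however, a genuine problem with your step~(i) in the discrete case, and you were right to flag it as the delicate point. With the construction actually given in \Eqref{eq:time-edge-weights-tau}--\Eqref{eq:running-cost-per-unit-time-l-extension}, the line integrals of $t^\delta$ and $l^\delta$ along a unit lattice edge do \emph{not} reproduce $\tau$ and $\lambda$ exactly: the two corner segments (each of length $\delta$ before mollification) carry $t^\delta_c=a$ and $l^\delta_c=-|p|_\infty$, not $\tau(v,\alpha)$ and $p\cdot\alpha$. So over the edge $(v,v+\alpha)$ one gets, for the piecewise functions, $(1-2\delta)\tau(v,\alpha)+2\delta a$ and $(1-2\delta)\,p\cdot\alpha-2\delta|p|_\infty$, and mollification adds a further $O(\delta)$ smear. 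In particular the second displayed equality of the Proposition is only an inequality ($\inf_{\gamma\in\mathcal U_z}\{\cdots\}\le\mu(z,s)$, since $a\le\tau$ and $-|p|_\infty\le p\cdot\alpha$), with an $O(\delta s)$ gap. The paper itself is loose here --- in the proof of \Lemref{lem:uniform-in-n-delta-estimate} it asserts ``$I^\eta_c(\gamma)$ is exactly the cost accumulated by a path in the discrete problem,'' which has the same defect --- but downstream only \Remref{rem:L-delta-smaller-than-L} ($u^\delta\le\mu$) and the $O(\sqrt\delta\,s)$ estimate of \Lemref{lem:uniform-in-n-delta-estimate} are used, and both absorb these $O(\delta)$ corner errors. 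So your write-up should either (a) state the discrete identity only up to an $O(\delta)$ error per edge, or (b) note that the only consequence actually needed is the inequality $u^\delta\le\mu$, for which the one-sided bounds on the corner contributions suffice.
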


\begin{proof}
  The proof follows from~\Propref{prop:cost-is-independent-of-time-parametrization} and the definitions of $u$ and $\mu$ in~\Eqref{eq:finite-time-horizon-variational-problem} and~\Eqref{eq:discrete-finite-time-horizon-problem}.
\end{proof}

\begin{remark}
  There are two things to notice about~\Propref{prop:reformulation-of-control-problem-as-integral-variational-problem}. First, notice that $\mu(x,s)$ is ``embedded'' in the continuum problem for every $\delta$, and further, $u^{\delta}(x,s) \leq \mu(x,s) ~\forall~ x \in \Z^d$. Second, we no longer need $t^{\delta}(x,\alpha)$ and $l^{\delta}(x,\alpha)$ to be smooth in $x$ for the variational problem to be well-defined. This allows us to work with the piecewise versions $l^{\delta}_c(x,\alpha)$ and $t^{\delta}_c(x,\alpha)$, since these are easier to compare to the discrete problems. 
\end{remark}

\begin{remark}
  One could argue that we needn't have introduced the optimal-control framework since our costs depend only on the graph of the path and not its parametrization. However, DPPs and Hamiltonians are conventionally represented using the control interpretation.  Several other models can be naturally formulated using the optimal-control language, and our approach ought to work for these (see~\Secref{sec:generalization}).
\end{remark}

%
%
\section{Proof of~\Lemref{lem:uniform-in-n-delta-estimate}}
We state a simple comparison result first. Suppose $t_1(x,\alpha) \leq t_2(x,\alpha)$ and $l_1(x,\alpha) \leq l_2(x,\alpha)$. Let $u_1(x,s)$ and $u_2(x,s)$ be the corresponding variational problems defined by~\Propref{prop:reformulation-of-control-problem-as-integral-variational-problem}. Then, it follows quite easily that
\begin{prop}[Comparison of variational problems]
  for all $x \in \R^d$ and $s > 0$,
  \[ 
    u_1(x,s) \leq u_2(x,s).
  \]
  \label{prop:comparison-of-variational-problems}
\end{prop}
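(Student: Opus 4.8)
The statement to prove is Proposition~\ref{prop:comparison-of-variational-problems}: if $t_1(x,\alpha) \leq t_2(x,\alpha)$ and $l_1(x,\alpha) \leq l_2(x,\alpha)$ pointwise, then $u_1(x,s) \leq u_2(x,s)$ for all $x \in \R^d$ and $s > 0$. Here each $u_i$ is the integral variational problem from Proposition~\ref{prop:reformulation-of-control-problem-as-integral-variational-problem}, namely $u_i(x,s) = \inf_{g \in U_x}\{ I_i(g) : W_i(g) \leq s\}$, where $I_i$ and $W_i$ are the cost and weight functionals built from $l_i$ and $t_i$ via the one-homogeneous extensions in~\Eqref{eq:running-cost-and-time-one-homo-extension}.

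The plan is the following. Fix $x$ and $s$, and let $g \in U_x$ be any admissible path for the problem defining $u_1(x,s)$; that is, $W_1(g) \leq s$. I want to show $g$ is also admissible for $u_2$ and that its $u_2$-cost is no larger than its $u_1$-cost --- wait, that is backwards. Let me instead start from a path admissible for the \emph{right} side: fix $\eta > 0$ and choose $g \in U_x$ with $W_2(g) \leq s$ and $I_2(g) \leq u_2(x,s) + \eta$. The key monotonicity observations are: first, since $t^\delta$ and hence $t_i$ is extended one-homogeneously (so $t_i(x,r) = t_i(x, r/|r|)\,|r|$ with $|r| \geq 0$), the inequality $t_1(x,\alpha) \leq t_2(x,\alpha)$ on $\R^d \times A$ propagates to $t_1(x,r) \leq t_2(x,r)$ on $\R^d \times \R^d$, and therefore $W_1(g) = \int_0^1 t_1(g,g'/|g'|)|g'| \leq \int_0^1 t_2(g,g'/|g'|)|g'| = W_2(g) \leq s$, so $g$ is admissible for the $u_1$-problem. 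Second, the same one-homogeneous extension applied to the running cost gives $l_1(x,r) \leq l_2(x,r)$ pointwise, hence $I_1(g) = \int_0^1 l_1(g,g') \leq \int_0^1 l_2(g,g') = I_2(g)$. Combining, $u_1(x,s) \leq I_1(g) \leq I_2(g) \leq u_2(x,s) + \eta$. Letting $\eta \to 0$ finishes the argument. (If the infimum defining $u_2(x,s)$ is not attained, the $\eta$-argument is exactly what is needed; if the admissible set for $u_2$ is empty then $u_2(x,s) = +\infty$ and there is nothing to prove.)

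There is essentially no obstacle here --- the proposition is a soft monotonicity statement and the proof is a two-line comparison of integrands once one records that the one-homogeneous extension~\Eqref{eq:running-cost-and-time-one-homo-extension} preserves the pointwise ordering (it multiplies by $|r| \geq 0$, which is order-preserving). The only point requiring the tiniest care is the direction of the argument: one must transport a near-optimal path for the $u_2$-problem and check it remains feasible for $u_1$ (feasibility can only improve when the weight decreases, since the constraint is $W(g) \leq s$), rather than the other way around. I would state it in the $\eta$-approximation form so that no existence of minimizers is needed. The discrete version $\mu$ is handled identically, restricting $U_x$ to $\mathcal{U}_x$ throughout; the same inequalities hold verbatim on the smaller path class.
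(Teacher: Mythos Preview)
Your proposal is correct and is exactly the argument the paper has in mind; in fact the paper does not write out a proof at all, merely remarking that the proposition ``follows quite easily'' from the hypotheses $t_1\le t_2$ and $l_1\le l_2$. Your observation about the direction---starting from a near-optimal $u_2$-path and checking it is feasible (with smaller cost) for $u_1$---is precisely the point, and the $\eta$-argument is the clean way to avoid assuming minimizers exist.
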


\begin{remark}
  Let $w^{\delta}(x,s) \leq u^{\delta}(x,s)$ for all $x \in \R^d$ and $s > 0$. To prove~\Lemref{lem:uniform-in-n-delta-estimate}, it's enough to show that for any $s > 0$,
\[ 
  \mu(x,s) - w^{\delta}(x,s)  \leq  C (\sqrt{\delta} s + 1) \quad \forall x \in \Z^d. 
\] 

  \label{rem:simple-observation-that-will-help-us-work-with-piecewise-functions}
\end{remark}

\begin{proof}[Proof of~\Lemref{lem:uniform-in-n-delta-estimate}]
We will use the observation made in~\Remref{rem:simple-observation-that-will-help-us-work-with-piecewise-functions} repeatedly in the proof. We will also refer to the bounds on $\tau(x,\alpha)$ assumed in~\Eqref{eq:basic-assumption-on-edge-weights}. The proof takes several steps.

\newcounter{proofsteps} \setcounter{proofsteps}{0}
\stepcounter{proofsteps}\noindent\theproofsteps .  Let us first reduce to the case where the cost and time functions are piecewise constant. For this, we will show that
    \begin{equation}
      \begin{split}
        l^{2\delta}_c(x,\alpha) & \leq l^{\delta}(x,\alpha), \\
        t^{2\delta}_c(x,\alpha) & \leq t^{\delta}(x,\alpha). 
      \end{split}
      \label{eq:reducing-to-piecewise-case}
    \end{equation}
    Then, for a fixed path $g$, it would follow from~\Propref{prop:comparison-of-variational-problems} that 
    \[ 
      I^{2\delta}_c(g) \leq I^{\delta}(g) \AND W^{2\delta}_c(g) \leq W^{\delta}(g),
    \]
    where the subscript $c$ in $I$ and $W$ represent the cost and time corresponding to the piecewise versions of $l$ and $\tau$. Then,~\Propref{prop:comparison-of-variational-problems} and~\Remref{rem:simple-observation-that-will-help-us-work-with-piecewise-functions} imply that we can just work with $t_c^{2\delta}$ and $l_c^{2\delta}$. 

    To show the two inequalities in~\Eqref{eq:reducing-to-piecewise-case}, it's enough to show that $t^{2\delta}_c(x,\alpha) \leq t^{\delta}_c(y)$ and $l^{2\delta}_c(x) \leq l^{\delta}_c(y)$ for all $y$ in the ball $B_{\delta}(x)$. This is clear by drawing a picture. Then, multiply with the standard mollifier $\eta_{\delta/2}(y-x)$ and integrate over $y$.

  \stepcounter{proofsteps}\noindent\theproofsteps .
  Create boxes of side-length $1-2(\delta+\sqrt{\delta})$ called \emph{bad regions} (BRs) contained inside centers (see~\Figref{fig:sketch-of-fattened-unit-cell}). We show that if a path goes through a BR, it will do so badly that it will make more sense to stick to the tubes and corners. If $g$ visits a BR, it will take at least time 
    \[ \sqrt{\delta}t^{2\delta}_c(x,\alpha) = \frac{1}{\sqrt{\delta}}.  \]
    Hence it accumulates cost
    \[ \sqrt{\delta}t^{\delta}_c(x,\alpha) \cdot \frac{\delta^{-1}}{t^{\delta}_c(x,\alpha)} = \frac{1}{\sqrt{\delta}} .\]
    A path $g$ that visits a BR must leave a tube at some point $A$ to enter the BR. Once it's done being bad, it must re-enter another tube or corner in the same cell at a point $B$. We will form a new path $g^*$ that connects $A$ and $B$ by a path that only goes through tubes and corners. Making crude estimates, we see that the new path $g^*$ has distance at most $3$ to travel. Since the edge-weight function is bounded, it takes time at most $3b$, and costs at most $3C$. Since $g^*$ does not take more time than $g$ and costs less, we might as well assume that paths do not enter BRs. Henceforth, we will assume that $U$ contains only such good paths.
    
\stepcounter{proofsteps}\noindent\theproofsteps .
 Expand the tubes $TU^{2\delta}$ and corners $CO^{2\delta}$ to have thickness $\eta$, where
  \begin{equation}
    \eta = 2 \delta + \sqrt{\delta}.
    \label{eq:eta-tube-size-definition}
  \end{equation}
That is, we've expanded the tubes $TU^{2\delta}$ and corners $CO^{2\delta}$ so that the centers shrink to the BRs. Consider the problem $t^{\eta}_c$, $l^{\eta}_c$ defined by~\Eqref{eq:running-cost-per-unit-time-l-extension} and~\Eqref{eq:time-edge-weights-tau}. Then for $\delta$ small enough,
\[ u^{\eta}(x,s) \leq u^{2\delta}(x,s). \]

\stepcounter{proofsteps}\noindent\theproofsteps .
 For a path $g \in U$, we need to construct an edge-path $\gamma \in \mathcal{U}$ that has a similar cost and time. We will consider each tube and corner that $g$ passes through (since it avoids BRs), and construct $\gamma$ in each region so that it follows $g$ around. Fix such a tube $TU$ or corner $CO$, and continue to write $g$ for just the section of the path going through it. Then,
\begin{claim}
  We can find $\gamma \in \mathcal{U}$ such that
  \[ 
    I^{\eta}_c(\gamma) \leq I^{\eta}_c(g) + C\eta, 
  \]
  \[ 
    W^{\eta}_c(\gamma) \leq W^{\eta}_c(g) + C\eta, 
  \]
  inside each tube or a corner that $g$ goes through.
  \label{claim:comparing-paths-deltapprox-to-edge-paths}
\end{claim}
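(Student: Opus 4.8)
The plan is to build $\gamma$ by following $g$'s itinerary through the fattened cell and to verify the two inequalities one tube or corner at a time, with the entire error concentrated in the corners. The preceding reductions let us work with the piecewise-constant data $(t^\eta_c,l^\eta_c)$, so inside a single tube $TU^\eta_{z,\alpha}$ these functions depend only on the direction of motion: along $\pm\alpha$ the cost rate is $\pm p\cdot\alpha$ and the time rate lies in $[a,b]$ (it is $\tau(z,\alpha)$ forward and $\tau(z+\alpha,-\alpha)$ backward), whereas along any $\beta\in A$ with $\beta\perp\alpha$ the point has $\alpha$-coordinate bounded away from $\Z$, hence lies in no corner and in no $\beta$-tube, so both rates equal the penalty value $\eta^{-1}>0$. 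Inside a corner $CO^\eta_z$ the cost rate is the constant $-C=|p|_\infty\ge 0$ and the time rate is the constant $a$, in every direction. A good path $g$ never enters a penalty region, so it passes through an alternating sequence of tubes and corners, and whatever cost and time it accrues in penalty regions is nonnegative and will simply not be charged against $\gamma$.

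To define $\gamma$, list the vertices $z_0,z_1,\ldots$ whose corners $g$ successively visits (with repetitions when $g$ doubles back); consecutive $z_j$ are lattice neighbours because moving from $CO^\eta_{z_j}$ to $CO^\eta_{z_{j+1}}$ forces $g$ through the tube $TU^\eta_{z_j,\,z_{j+1}-z_j}$. Let $\gamma$ traverse the edges $[z_j,z_{j+1}]$ in this order; concretely, inside $TU^\eta_{z,\alpha}$ we let $\gamma$ run monotonically along the edge $[z,z+\alpha]$ between the $\alpha$-coordinates at which $g$ enters and leaves the tube, and inside $CO^\eta_z$ we send $\gamma$ from the end of the incoming edge-stub through the vertex $z$ to the start of the outgoing edge-stub; a suitable parametrization on $[0,1]$ makes $\gamma\in\mathcal U$. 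Consecutive pieces glue because a point of $\partial TU^\eta_{z,\alpha}\cap\partial CO^\eta_z$ sits at $\alpha$-coordinate exactly $\eta$ from $z$, which is where the corner's edge-stub meets the tube. Now fix a tube $TU^\eta_{z,\alpha}$ and a maximal sub-arc of $g$ inside it, with entry and exit $\alpha$-coordinates $a_\alpha,b_\alpha$ relative to $z$; split its length into a forward part $f$, a backward part $b$ (so $f-b=b_\alpha-a_\alpha$, say $\ge 0$), and a perpendicular part $L_\perp$. The $\alpha$-cost telescopes, $I^\eta_c(g|_{TU})=(p\cdot\alpha)(b_\alpha-a_\alpha)+\eta^{-1}L_\perp$, while $\gamma$ contributes exactly $(p\cdot\alpha)(b_\alpha-a_\alpha)$; and $W^\eta_c(g|_{TU})=\tau(z,\alpha)f+\tau(z+\alpha,-\alpha)b+\eta^{-1}L_\perp\ge\tau(z,\alpha)(f-b)=W^\eta_c(\gamma|_{TU})$. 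Hence tubes contribute no error to either inequality (if $a_\alpha=b_\alpha$, $\gamma$ skips the tube and the right-hand sides are $\ge 0$).

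Inside a corner $CO^\eta_z$ the path $g$ has some length $L_g\ge 0$, so $I^\eta_c(g|_{CO})=|p|_\infty L_g\ge 0$ and $W^\eta_c(g|_{CO})=aL_g\ge 0$; meanwhile $\gamma$ enters and leaves $CO^\eta_z$ along edge-stubs of length $\eta$ meeting at $z$, so its length there is at most $2\eta$, whence $I^\eta_c(\gamma|_{CO})\le 2|p|_\infty\eta$ and $W^\eta_c(\gamma|_{CO})\le 2a\eta$. Therefore in every corner $I^\eta_c(\gamma|_{CO})\le I^\eta_c(g|_{CO})+2|p|_\infty\eta$ and $W^\eta_c(\gamma|_{CO})\le W^\eta_c(g|_{CO})+2a\eta$, and together with the tube bound this yields the Claim with a constant $C$ independent of $\delta$ and $n$.

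The delicate point is the combinatorial bookkeeping of the itinerary correspondence: one must check that consecutive pieces of $\gamma$ really match (the perpendicular offset between $g$ on a tube wall and $\gamma$ on the axis never exceeds $\eta$ and is absorbed by the adjacent corner routing), and, in dimension at least $3$, that $g$ may leave the tube--corner system through ``face'' regions not covered by the centre. The latter is harmless: such an excursion still lies in a penalty region, so it adds nonnegative cost and time to $g$ --- in fact at least $\eta^{-1}$ times the distance travelled, which for small $\delta$ dominates the $O(1)$-length vertex detour $\gamma$ may need to reconnect --- so these excursions are charged to $g$ rather than compared region-by-region. Keeping all these errors at $C\eta$ with $C$ uniform in $\delta$ and $n$ is essentially the only work; the one substantive observation is the tube estimate, namely that off-axis travel inside a tube is penalized while on-axis cost telescopes.
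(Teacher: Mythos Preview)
Your argument follows the same route as the paper's: construct $\gamma$ to shadow $g$'s tube--corner itinerary, dispose of corners by their $O(\eta)$ size, and in each tube exploit that the on-axis running cost $p\cdot\alpha$ telescopes so only the displacement $b_\alpha-a_\alpha$ matters. The paper's write-up is terser---it simply asserts $I^\eta_c(g)=p\cdot(B-A)$ in a tube and $d(\gamma)\le d(g)$ for the time bound---whereas you correctly split off the transverse length $L_\perp$ (which carries the penalty rate $\eta^{-1}$ and hence only helps the inequalities) and separately track forward/backward on-axis travel; you also flag the $d\ge 3$ face-region gap that the paper's Step~2 does not explicitly close. These are refinements of the same argument rather than a different method.
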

We will prove Claim~\ref{claim:comparing-paths-deltapprox-to-edge-paths} after completing the proof of this lemma.

\stepcounter{proofsteps}\noindent\theproofsteps .
 A path $g \in U$ can travel a distance at most $s/a$ in time $s$. So the number of tubes and corners the path can visit is at most $2s/a$. Using Claim~\ref{claim:comparing-paths-deltapprox-to-edge-paths}, we will approximate $g$ by an edge-path $\gamma$ in each corner and tube that it goes through, except for possibly $C\eta s$ tubes and corners (since $\gamma$ is typically slower through the corners). Hence, the most cost that $\gamma$ could have missed out on is $C\eta s$, since $l^{\eta}_c(x,\alpha)$ is bounded below. 
 
 We must also take the final cost due to $u_0$ into account. Recall that we've assumed that $\Norm{u_0}{Lip} < \infty$. The final locations of $g$ and $\gamma$ cannot differ by more than $C\eta s$ and hence, neither can the final cost. Finally, $\gamma$ ought to end on a lattice point, whereas $g$ need not. Accounting for all this, we get that
\[ 
  I^{\eta}_c(\gamma) \leq I^{\eta}_c(g) + C \eta s + C.
\]

Since $I^{\eta}_c(\gamma)$ is exactly the cost accumulated by a path in the discrete problem, scaling by $n$ completes the proof.
\end{proof}

\begin{proof}[Proof of claim~\ref{claim:comparing-paths-deltapprox-to-edge-paths}]
  Corners are of size $\eta$, and the inequalities in Claim~\ref{claim:comparing-paths-deltapprox-to-edge-paths} are immediate. Hence, we only need to prove it for a tube, which wlog, we can assume to be $TU^{\eta}_{0,e_1}$. The end-caps of the tube of size $\eta$ at the origin in the $e_1$ direction are $\{ \eta e_1 + y : y \in \{e_1\}^{\perp}, |y|_{\infty} \leq \eta\}$, and $\{ (1-\eta) e_1 + y : y \in \{e_i\}^{\perp}, |y|_{\infty} \leq \eta\}$. Let $A = (A_i)_{i=1}^{d}$ be the last point on the end-cap before $g$ enters the tube, and let $B=(B_i)_{i=1}^{d} $ be the first point on the end-cap when $g$ exits the tube. Assume a parametrization such that $g(0) = A$ and $g(1) = B$.
   The total cost of $g$ is
    \begin{equation*}
        I^{\eta}_c(g) = \int_0^1 p \cdot g'(s) ~ds = p \cdot (B - A). 
    \end{equation*}
  If $A$ and $B$ are on the same end-cap, the edge-path $\gamma$ does not move at all, and if they're on different end-caps, it travels from end-cap to end-cap along the edge. 
  
  The tube is aligned with $e_1$ by construction, and hence $|A_1 - B_1| = 1-2\eta$ and $|A_j - B_j| \leq 2\eta ~\forall j \neq 1$. Hence,
  \[ 
    I^{\eta}_c(g) - I^{\eta}_c(\gamma) \leq C \eta.
  \]
  It's also clear that the $L^1$ length of the path is
  \[ 1 - 2\eta = d(\gamma) \leq d(g). \]
  It follows that
  \begin{equation}
    W^{\eta}_c(\gamma) \leq W^{\eta}_c(g).
    \label{eq:time-of-edge-path-compare-smooth-path}
  \end{equation}
\end{proof}

To obtain a version of~\Lemref{lem:uniform-in-n-delta-estimate} for $T^{\delta}_n(x)$ and $\mathcal{T}_n(x)$, first replace $U$ by $U_{0,x}$. The essential estimate is already present in Claim~\ref{claim:comparing-paths-deltapprox-to-edge-paths}, and the rest of the argument is identical.

\section{Lipschitz estimates on Hamiltonians and time-constants}
We next prove the Lipschitz estimates on the time-constants $m^{\delta}$ and limiting Hamiltonians $\overline{H}^{\delta}$.
\begin{proof}[Proof of~\Propref{prop:lipschitz-continuity-of-limit-hamiltonian}]
  Fix $p_1 \AND p_2 \in \R^d$. Let $l^{\delta}_{i,c}(x,\alpha)$ be the piecewise function defined in~\Eqref{eq:running-cost-per-unit-time-l-extension} for each $p_i$. Let $l^{\delta}_{i}$ be the mollified versions, and let $u^{\delta}_i$ be the corresponding finite time cell-problems with the same $t^{\delta}(x,\alpha)$ (for $i=1,2$). Then,~\Propref{prop:convergence-of-u-to-H-p} states that 
  \[ 
  \lim_{t \to \infty} \frac{u^{\delta}_i(x,s)}{t} = -\overline{H}^{\delta}(p_i) \quad \text{ for } i=1,2. 
  \]
  Now, $l^{\delta}_{1,c}$ and $l^{\delta}_{2,c}$ differ only in the tubes, and hence satisfy
  \[ 
    |l^{\delta}_{1,c}(x,r) - l^{\delta}_{2,c}(x,r)| \leq |p_1 - p_2||r| \quad \forall x,r \in \R^d. 
  \]
  It follows that the same inequality applies for the mollified versions $l_{i}^{\delta}$. So for any path $g(s)$,
  \[ 
  \begin{split}
    I^{\delta}_{p_1}(g) - I^{\delta}_{p_2}(g) & = \int_0^s l_1^{\delta}(g(r),g'(r)) - l_2^{\delta}(g(r),g'(r))dr \\
    & \leq |p_1 - p_2|d(g)  . 
  \end{split}
  \]
  Since the total length of the path is at most $d(g) \leq s/a$, the Lipschitz estimate follows.
\end{proof}

\begin{proof}[Proof of~\Propref{prop:lipschitz-continuity-of-time-constant}]
  Let $T^{\delta}(x,y)$ be the first-passage time from $x$ to $y$ in the \deltaapprox. Since
  \[ |T^{\delta}(0,[x]) - T^{\delta}(0,x)| \leq \frac{1}{\delta}, \]
  we have that
  \[ \lim_{n \to \infty} \frac{T^{\delta}(0,[nx])}{n} = m^{\delta}(x). \]
  For any $x,y \in \R^d$, using subadditivity, we get the estimate
  \[ 
    T^{\delta}(0,[nx]) \leq T^{\delta}(0,[ny]) + T^{\delta}([ny],[nx]). \\
  \]
  Then, using the fact that we can take an edge path from $[ny]$ to $[nx]$, and that the time to cross each edge is at most $b$, we get 
  \begin{equation*}
    T^{\delta}(0,[nx]) \leq T^{\delta}(0,[ny]) +  b |[ny] - [nx]|.\\
  \end{equation*}
Dividing by $n$ and taking a limit as $n \to \infty$ gives us the Lipschitz estimate.
\end{proof}

\chapter{Proofs related to the discrete variational formula}
\label{chap:proofs-related-to-discrete-variational-formula}
\section{Proof of the discrete variational formula}
\label{sec:proof-of-discrete-variational-formula}
In the following, constants will all be called $C$ and will frequently change from line-to-line. We begin with the proof of~\Propref{prop:approximate-discrete-HJB-for-stationary-problem}, which says that the discrete stationary problem $\nu_{\e}$ approximately satisfies a HJB equation. 

\begin{proof}[Proof of~\Propref{prop:approximate-discrete-HJB-for-stationary-problem}]
  We will first derive a bound and a Lipschitz estimate for $\nu_{\e}(x)$. From the variational definition of $\nu_{\e}$ in~\Eqref{eq:discrete-stationary-control-problem}, it's easy to see that
  \[
    -\frac{|p|_{\infty}}{\e a} \leq \nu_{\e}(x) \leq -\frac{|p|_{\infty}}{\e b} \quad \forall~ x \in \Z^d
  \]
  where $b$ and $a$ are the upper and lower bounds on $\tau(x,\alpha)$ (see~\Eqref{eq:basic-assumption-on-edge-weights}). This simple upper bound can be used to derive a Lipschitz estimate
  \begin{claim}
    The functions $\nu_{\e}(x)$ satisfy (uniformly in $\e$) for all $x \in \Z^d$,
    \begin{align}
\mathcal{H}(\nu_{\e},p,x) = & \sup_{\alpha \in A} \left( \frac{-\mathcal{D}\nu_{\e}(x,\alpha) - p\cdot\alpha}{\tau(x,\alpha)} \right) \leq \frac{1}{a}|p|_{\infty} , \label{eq:2-lipschitz-conditions-nu-epsilon-upper-bound}\\
        0 \leq & \sup_{\alpha \in A} \left( - \mathcal{D} \nu_{\e}(x,\alpha) - p \cdot \alpha \right) . \label{eq:2-lipschitz-conditions-nu-epsilon-lower-bound} 
    \end{align}
    \label{claim:2-lipschitz-conditions-nu-epsilon}
  \end{claim}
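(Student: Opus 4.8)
The plan is to extract both inequalities directly from the dynamic programming principle~\eqref{eq:discrete-dpp-stationary-problem}, namely $\nu_{\e}(x) = \inf_{\alpha \in A}\{\alpha\cdot p + e^{-\e\tau(x,\alpha)}\nu_{\e}(x+\alpha)\}$, together with the uniform two-sided bound $-|p|_{\infty}/(\e a) \le \nu_{\e}(x) \le -|p|_{\infty}/(\e b)$ just established. First I would rewrite the DPP to isolate a discrete-derivative expression. From the DPP one has, for every $\alpha$,
\[
  \nu_{\e}(x) \le \alpha\cdot p + e^{-\e\tau(x,\alpha)}\nu_{\e}(x+\alpha),
\]
with equality for the optimizing $\alpha$. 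Rearranging, $-\mathcal{D}\nu_{\e}(x,\alpha) - p\cdot\alpha \le \bigl(e^{-\e\tau(x,\alpha)}-1\bigr)\nu_{\e}(x+\alpha)$ for all $\alpha$, and equality for the optimizer. This is the algebraic identity that drives everything.

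For the lower bound~\eqref{eq:2-lipschitz-conditions-nu-epsilon-lower-bound}, I would use the equality case: at the optimizing $\alpha$, $-\mathcal{D}\nu_{\e}(x,\alpha)-p\cdot\alpha = (e^{-\e\tau(x,\alpha)}-1)\nu_{\e}(x+\alpha)$. Since $\tau \ge a > 0$ we have $e^{-\e\tau}-1 < 0$, and since $\nu_{\e}(x+\alpha) \le -|p|_{\infty}/(\e b) < 0$, the product is nonnegative; hence the supremum over $\alpha$ of $-\mathcal{D}\nu_{\e}(x,\alpha)-p\cdot\alpha$ is at least this nonnegative quantity, giving~\eqref{eq:2-lipschitz-conditions-nu-epsilon-lower-bound}. (Alternatively, and perhaps more cleanly, one notes $-\mathcal{D}\nu_{\e}(x,\alpha) - p\cdot\alpha$ and $-\mathcal{D}\nu_{\e}(x,-\alpha)+p\cdot\alpha$ average to a quantity controlled by the DPP; but the equality-case argument is the most direct.)

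For the upper bound~\eqref{eq:2-lipschitz-conditions-nu-epsilon-upper-bound}, I would bound the right-hand side $(e^{-\e\tau(x,\alpha)}-1)\nu_{\e}(x+\alpha)$ uniformly. Using $0 \le 1 - e^{-\e\tau(x,\alpha)} \le \e\tau(x,\alpha) \le \e b$ (valid since $\e\tau > 0$) together with $|\nu_{\e}(x+\alpha)| \le |p|_{\infty}/(\e a)$, we get
\[
  \frac{-\mathcal{D}\nu_{\e}(x,\alpha) - p\cdot\alpha}{\tau(x,\alpha)}
    \le \frac{(1 - e^{-\e\tau(x,\alpha)})\,|\nu_{\e}(x+\alpha)|}{\tau(x,\alpha)}
    \le \frac{\e\tau(x,\alpha)}{\tau(x,\alpha)}\cdot\frac{|p|_{\infty}}{\e a}
    = \frac{|p|_{\infty}}{a},
\]
and taking the supremum over $\alpha \in A$ yields~\eqref{eq:2-lipschitz-conditions-nu-epsilon-upper-bound}. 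I expect the main (minor) obstacle to be purely bookkeeping: keeping the signs straight, since $\nu_{\e}$ is negative and $e^{-\e\tau}-1$ is negative, and making sure the elementary inequality $1-e^{-u}\le u$ is applied with the right argument $u = \e\tau(x,\alpha)$. Once the DPP is rewritten in discrete-derivative form, both estimates fall out of the uniform $L^\infty$ bound on $\nu_{\e}$ and this one elementary inequality.
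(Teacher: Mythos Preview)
Your proposal is correct and follows essentially the same route as the paper: both arguments rearrange the DPP to isolate $-\mathcal{D}\nu_{\e}(x,\alpha)-p\cdot\alpha$, use the negativity of $\nu_{\e}$ together with $1-e^{-u}\le u$ (equivalently $e^{-u}\ge 1-u$) and the bound $|\nu_{\e}|\le |p|_{\infty}/(\e a)$ for the upper bound, and for the lower bound use that $(e^{-\e\tau}-1)\nu_{\e}\ge 0$ (the paper phrases this as $e^{-\e\tau}\nu_{\e}(x+\alpha)\ge \nu_{\e}(x+\alpha)$ and then takes the infimum). The only cosmetic difference is that you invoke the optimizing $\alpha$ explicitly, while the paper works directly with the infimum; since $A$ is finite this is immaterial.
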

  We will prove~\Claimref{claim:2-lipschitz-conditions-nu-epsilon} after completing the proof of the proposition. Applying~\Eqref{eq:2-lipschitz-conditions-nu-epsilon-upper-bound} at $x$ and $x + \alpha$ gives us the discrete Lipschitz estimate 
 \begin{equation}
   \Norm{\nu_{\e}}{Lip} \leq  \frac{a + b}{a} |p|_{\infty}. 
   \label{eq:discrete-lipschitz-estimate-for-stationary-problem-nu}
 \end{equation}
 
  Recall the DPP
 \[ 
   \nu_{\e}(x) = \inf_{\alpha} \left( \alpha \cdot p + e^{-\e \tau(x,\alpha)} \nu_{\e}(x + \alpha) \right).
 \]
 Expand the exponential in the DPP in a Taylor series, and use the bound on $\nu_{\e}$ to get
 \begin{align*}
    -C\e & \leq\nu_{\e}(x) + \sup_{\alpha \in A} \big( -\alpha \cdot p - (1-\e \tau(x,\alpha)) \nu_{\e}(x+\alpha) \big) \leq C \e ,\\
    -C\e & \leq \sup_{\alpha \in A} \big( -\alpha \cdot p - \mathcal{D}\nu_{\e}(x,\alpha) + \e \tau(x,\alpha) \mathcal{D}\nu_{\e} + \e \tau(x,\alpha) \nu_{\e}(x) \big) \leq C\e .
 \end{align*}
 Divide through by $\tau(x,\alpha)$, and then use the Lipschitz estimate on $\nu_{\e}$ and the bound on $\tau(x,\alpha)$ to get
 \begin{align*}
    -C\e & \leq \sup_{\alpha \in A} \tau(x,\alpha) \left( \frac{-\alpha \cdot p - \mathcal{D}\nu_{\e}(x,\alpha)}{\tau(x,\alpha)} + \e \nu_{\e}(x) \right) \leq C\e , \\
   -C\e & \leq \e \nu_{\e}(x) + \sup_{\alpha \in A} \left( \frac{-\alpha \cdot p - \mathcal{D}\nu_{\e}(x,\alpha)}{\tau(x,\alpha)} \right) \leq C\e.  
 \end{align*}
\end{proof}

\begin{proof}[Proof of~\Claimref{claim:2-lipschitz-conditions-nu-epsilon}]
  Using the DPP, we get for fixed $\alpha \in A$,
  \[
    \begin{split}
       \nu_{\e}(x)  & \leq p \cdot \alpha + e^{-\e \tau(x,\alpha)} \nu_{\e}(x + \alpha),\\
       \nu_{\e}(x)  & \leq p \cdot \alpha + (1 - \e \tau(x,\alpha)) \nu_{\e}(x + \alpha),\\
       -\mathcal{D}\nu_{\e}(x,\alpha) - p\cdot\alpha  & \leq \frac{\tau(x,\alpha)}{a}|p|_{\infty}.
    \end{split}
  \]
  This proves the upper bound. The lower bound that will be useful in part~\ref{part:two}. Since $\nu_{\e}(x)$ is negative, for each $\alpha$,
 \[
   e^{-\e \tau(x,\alpha)} \nu_{\e}(x + \alpha) \geq \nu_{\e}(x + \alpha).
 \]
 Hence,
 \begin{equation}
   \begin{split}
     \nu_{\e}(x) \geq \inf_{\alpha \in A} p \cdot \alpha + \nu_{\e}(x + \alpha),\\
     \sup_{\alpha \in A} \left( - \mathcal{D} \nu_{\e}(x,\alpha) - p \cdot \alpha \right) \geq 0.
   \end{split}
 \end{equation}
\end{proof}

Before proving the comparison principle, we first finish the proof of the discrete variational formula in~\Thmref{thm:discrete-variational-formula-for-H-bar}. In the following proof we'll need to use probability, so we'll reintroduce $\w$ wherever necessary.
\begin{proof}[Proof of~\Thmref{thm:discrete-variational-formula-for-H-bar}]
  Let's first prove the upper bound. Let $\phi \in S$, where $S$ is defined in~\Eqref{eq:discrete-set-S-for-variationalf-formula}. Suppose $\phi$ is such that
  \[ 
    \sup_x \mathcal{H}(\phi,p,x,\w) < \infty ~\almostsurely .
  \]
  The form of $\mathcal{H}(\phi,p,x,\w)$ (see~\Eqref{eq:discrete-hamiltonian-1}) implies that it's coercive. Hence, we must have $\Norm{\phi}{Lip} < \infty$.  
  Then, the comparison principle for the finite time-horizon cell-problem in~\Propref{prop:comparison-principle} gives
  \[ 
    \phi(x) - t \sup_x \mathcal{H}(\phi,p,x,\w) \leq \mu(x,t,\w) \quad\forall x \in \Z^d.
  \]
    Divide the inequality by $t$, take a limit as $t\to\infty$, and use~\Propref{prop:convergence-of-u-to-H-p} and~\Thmref{thm:commutation-theorem-for-u-and-T}. Then, rearrange the inequality and take a sup over $x$ to get
  \[ 
    \overline{H}(p) \leq \sup_{x \in \Z^d} \mathcal{H}(\phi,p,x,\w).
  \]
 

Now, consider the discrete stationary problem given by~\Eqref{eq:discrete-stationary-control-problem} with edge-weights $\lambda(x,\alpha) = p\cdot\alpha$. Using the discrete HJB equation for $\nu_{\e}$ from~\Propref{prop:approximate-discrete-HJB-for-stationary-problem}, we get
\[ 
  \e \nu_{\e}(x,\w) + \mathcal{H}(\nu_{\e},p,x,\w) \leq C \e  \quad \forall~ x \in \Z^d. 
\]
As in the proof of the continuous variational formula in~\Secref{sec:proof-of-homogenization}, we can normalize this set of functions so that they're zero at the origin. Letting $\hat{\nu}_{\e}(x,\w) = \nu_{\e}(x,\w) - \nu_{\e}(0,\w)$, we get 
\[ 
  \e \hat{\nu}_{\e}(x,\w) + \mathcal{H}(\hat{\nu}_{\e},p,x,\w) \leq C\e -\e \nu_{\e}(0,\w). 
\]
Using the definition of the discrete Hamiltonian, we get for each $\alpha \in A$,
\begin{equation}
  \e \hat{\nu}_{\e}(x,\w) + \frac{-p\cdot\alpha-\mathcal{D}\hat{\nu}_{\e}(x,\alpha,\w)}{\tau(x,\alpha,\w)} \leq C\e -\e \nu_{\e}(0,\w). 
  \label{eq:step-in-lower-bound-of-variational-formula}
\end{equation}
$\hat{\nu}_{\e}$ is normalized to zero at the origin, and inherits the discrete Lipschitz estimate on $\nu_e$~\Eqref{eq:discrete-lipschitz-estimate-for-stationary-problem-nu}. Hence,
\[ 
  C = \sup_{\e} \left\{ \Norm{\hat{\nu}_{\e}(y,\w)(1+|y|)^{-1}}{\infty} + \Norm{\mathcal{D}\hat{\nu}_{\e}}{\infty} \right\} < \infty .
\]

Let $\psi(\alpha,\w)$ be an $L^2$ weak limit of $\mathcal{D}\hat{\nu}_{\e}(0,\alpha,\w)$ (as $\e \to 0$) for each $\alpha \in \{e_1,\ldots,e_d\}$. With a slight abuse of notation, we use the translation group to define $\psi(x,\alpha,\w) = \psi(\alpha,V_x\w)$. Consider a control $\alpha \in \mathcal{A}$ such that for some $k > 0$, $\gamma_{\alpha,x}(k)=x$; i.e., it forms a loop. For fixed $\e$,
\[ 
  \sum_{i=0}^k \mathcal{D}\hat{\nu}_{\e}(\gamma_{\alpha,x}(i),\alpha(i)) = 0 \quad \almostsurely~\w. 
\]
Since the measure is translation invariant, each $\psi(\gamma_{\alpha,x}(i),\alpha(i),\w)$ is an $L^2$ weak limit of $\mathcal{D}\hat{\nu}_{\e}(\gamma_{\alpha,x}(i),\alpha(i),\w)$ for $i \leq k$. Then, for any $h \in L^2(\W)$, 
\[ 
  \begin{split}
    \lim_{\e \to 0} \int h(\w) \sum_{i =0}^k \mathcal{D}\hat{\nu}_{\e}(\gamma_{\alpha,x}(i),\alpha(i)) \Prob(d\w) & = \int h(\w) \sum_{i=0}^k \psi(\gamma_{\alpha,x}(i),\alpha(i),\w) \Prob(d\w) \\
    & = 0. 
  \end{split}
\]

Since there are only a countable number of loops and a countable number of points, $\psi$ sums over all loops at every location to zero almost surely. Hence, there is a function $\phi(x,\w)$ such that $\mathcal{D}\phi(x,\alpha,\w) = \psi(x,\alpha,\w)$. By $L^2$ weak-convergence and~\Eqref{eq:step-in-lower-bound-of-variational-formula}, we have for each fixed $x,\alpha$ and any nonnegative function $g(\w) \in L^2(\W)$,
\[ 
  \int g(\w) \left(\frac{p\cdot\alpha-\mathcal{D}\phi(x,\alpha,\w)}{\tau(x,\alpha,\w)} \right) \Prob(d\w) \leq \overline{H}(p) \int g(\w) \Prob(d\w). 
\]
using~\Propref{prop:statement-of-tauberian-theorem}. We can take a supremum over $x \in \Z^d$ and $\alpha \in A$ to get
\[ 
  \sup_x \mathcal{H}(\phi,p,x,\w) \leq \overline{H}(p) \quad \almostsurely.
\]
  This proves the other inequality and completes the proof.
\end{proof}

\section{Proof of the comparison principle}
The results in this section make no use of probability and hence we'll ignore the $\w$ dependence. Recall the discrete finite time-horizon variational problem from~\Eqref{eq:discrete-finite-time-horizon-problem}:
\begin{equation*}
  \mu(x,t) = \inf_{\alpha \in \mathcal{A}} \inf_{k \in \Z^+} \left\{ \sum_{i=0}^{k} \lambda(\gamma_{\alpha,x}(i),\alpha(i)) + \phi(\gamma_{\alpha,x}(k)) : \mathcal{T}(\gamma_{\alpha,x},k) \leq t \right\}. 
\end{equation*}
Since the time-parameter $t \in \R^+$ is continuous, the DPP for $\mu(x,t)$ is slightly different. 
\begin{prop}
  The DPP for $\mu(x,t)$ takes the form
  \[ 
  \mu(x,t) = \left\{ 
  \begin{array}{cc}
    \inf_{c \in A} \{ \mu(x+c,t-\tau(x,c)) + \lambda(x,c)\} & t \geq \min_{c \in A} \tau(x,c) \\
    \phi(x) & \text{otherwise}
  \end{array} \right. .
  \]
  \label{prop:DPP-for-discrete-variational-problem-for-comparison-principle}
\end{prop}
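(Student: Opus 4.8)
The statement is the dynamic-programming principle for the discrete finite-horizon problem $\mu(x,t)$, and I would prove it by the usual two-part argument --- concatenate at the first step, decompose at the first step --- the only extra care being that the budget $t$ is a real parameter rather than an integer. I would begin by fixing conventions: $\mu(y,s):=+\infty$ for $s<0$ (no control, not even the stationary one, is admissible then), and I would record that $\mu(y,\cdot)$ is nonincreasing, since a larger budget only enlarges the admissible set. The case $t<\min_{c\in A}\tau(x,c)$ is then immediate: any control using at least one step spends time $\tau(x,\alpha(0))\ge\min_{c\in A}\tau(x,c)>t$ on its first edge, so the constraint $\mathcal{W}_{x,k}(\alpha)\le t$ forces zero steps and the infimum collapses to the value of the trivial control at $x$, namely $\phi(x)$.

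For $t\ge\min_{c\in A}\tau(x,c)$, the inequality ``$\le$'' comes from concatenation: for each $c\in A$ with $\tau(x,c)\le t$ and each $\eps>0$, take a control realizing $\mu(x+c,\,t-\tau(x,c))$ within $\eps$, prepend the single step $c$, and check that the result is admissible from $x$ for budget $t$ with cost at most $\lambda(x,c)+\mu(x+c,t-\tau(x,c))+\eps$; sending $\eps\to0$ and minimizing over $c$ (terms with $\tau(x,c)>t$ are $+\infty$ by the convention above and drop out) gives $\mu(x,t)\le\inf_{c\in A}\{\mu(x+c,t-\tau(x,c))+\lambda(x,c)\}$. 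The inequality ``$\ge$'' comes from decomposition: take a control $\alpha$ with step-count $k$ realizing $\mu(x,t)$ within $\eps$; if $k\ge1$, peeling off the first step $c=\alpha(0)$ leaves a control admissible from $x+c$ for budget $t-\tau(x,c)$ (note $\tau(x,c)\le\mathcal{W}_{x,k}(\alpha)\le t$), whose cost is at least $\mu(x+c,t-\tau(x,c))$, so $\mu(x,t)+\eps\ge\lambda(x,c)+\mu(x+c,t-\tau(x,c))\ge\inf_{c'\in A}\{\mu(x+c',t-\tau(x,c'))+\lambda(x,c')\}$.

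The one point that is not pure bookkeeping --- and the step I expect to be the main obstacle --- is the subcase $k=0$ in the ``$\ge$'' direction: one must rule out that staying put at $x$ (cost $\phi(x)$) is strictly cheaper than every ``first step, then continue'' option when $t\ge\min_{c\in A}\tau(x,c)$, i.e.\ that $\inf_{c\in A}\{\mu(x+c,t-\tau(x,c))+\lambda(x,c)\}\le\phi(x)$, so that the right-hand side genuinely absorbs the $\phi(x)$ branch. Here I would use the explicit form $\lambda(x,\alpha)=p\cdot\alpha$: stepping to $x+c$ along a minimal-time direction $c$ and stepping back has $\lambda(x,c)+\lambda(x+c,-c)=p\cdot c+p\cdot(-c)=0$, so the round trip costs nothing and only consumes time; combined with $\mu(y,\cdot)\le\phi(y)$ and the monotonicity of $\mu$ in $t$ this yields the bound, with the case $t$ only barely above $\min_{c}\tau(x,c)$ (when the return edge may not fit the remaining budget) needing a little extra care with the time budget. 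Apart from this point the argument is routine, and the continuity of $t$ enters only through carrying $t-\tau(x,c)$ along and the sign convention for negative budgets.
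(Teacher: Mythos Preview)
Your concatenation/decomposition argument is exactly the paper's approach; the paper is simply terser and in fact silently skips the $k=0$ subcase that you correctly single out as the crux.

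However, your proposed resolution of that subcase via a cost-free round trip cannot be completed, because the stated identity is actually false in the regime you flag. Take $d=1$, $\tau\equiv1$, $p=0$ (so $\lambda\equiv0$), $\phi(0)=0$ and $\phi(y)=100$ for $y\neq0$. At $x=0$, $t=3/2$ one has $\mu(0,3/2)=\phi(0)=0$ (the control $k=0$ is admissible and optimal), while
\[
\inf_{c=\pm1}\bigl\{\mu(c,\tfrac12)+\lambda(0,c)\bigr\}=\phi(\pm1)=100.
\]
Your round trip fails here for precisely the reason you anticipated --- the return edge does not fit the remaining budget --- and no ``extra care with the time budget'' can repair an equality that does not hold. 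What your decomposition argument actually proves, once you retain the $k=0$ branch instead of trying to absorb it, is
\[
\mu(x,t)=\min\Bigl(\phi(x),\ \inf_{c\in A}\bigl\{\mu(x+c,t-\tau(x,c))+\lambda(x,c)\bigr\}\Bigr)
\qquad\text{for }t\ge\min_{c}\tau(x,c).
\]
This corrected form is harmless downstream: the only consumer of the DPP is the comparison principle, where one separately has $\zeta(x,s_k)=\phi(x)-Cs_k\le\phi(x)$ (since $C=\sup_x\mathcal{H}(\phi,p,x)\ge0$ for the relevant $\phi$), so the extra $\phi(x)$ branch is absorbed there anyway. But as a standalone proposition, the obstacle you identified is genuine and is not a matter of bookkeeping.
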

\begin{proof}
  If $t < \min_c \tau(x,c)$, then no neighbor of $x$ can be reached, and $\mu(x,t) = \phi(x)$. So assume that at least one neighbor $x + c$ can be reached. For fixed $c \in A$, consider the set of controls whose first step is in the $c$ direction: 
  \[ 
    \mathcal{B}_c := \{ \alpha \in \mathcal{A} : \alpha(0) = c \}. 
  \]
  There is an obvious map from $\mathcal{B}_c$ onto $\mathcal{A}$, obtained by shifting the control $\alpha(\cdot) \to \alpha(\cdot + 1)$ and forgetting the first step. It follows immediately that
  \[ 
    \mu(x,t) \leq \inf_{c \in \mathcal{A}} \mu(x+c,t-\tau(x,c)) + \lambda(x,c).
  \]
 For the opposite inequality, for any $\e > 0$ pick $\alpha$ and $k$ such that
 \begin{equation*}
   \begin{split}
     \mu(x,t) & \geq \sum_{i=0}^{k} \lambda(\gamma_{\alpha,x}(i),\alpha(i)) + \phi(\gamma_{\alpha,x}(k)) - \e, \\
     & \geq \lambda(x,x+\alpha(0)) + \mu(x,x+\alpha(0),t-\tau(x,\alpha(0))) - \e. 
   \end{split}
 \end{equation*}
\end{proof}

We next prove the comparison principle in~\Propref{prop:comparison-principle}.
\begin{define}[Reachable set]
  Following~\citet{bardi_optimal_1997}, define
  \begin{equation*} 
    R(x,t) := \{ y \in \Z^d : \mathcal{T}(x,y) \leq t \} 
  \end{equation*}
  to be the set of sites that can be reached from $x$ within time $t$.
  \label{def:reachable-set-definition}
\end{define}

\begin{proof}[Proof of~\Propref{prop:comparison-principle}]
Let $\phi(x)$ have bounded discrete derivatives and define  
\begin{equation}
  \zeta(x,t) = \phi(x) - t \sup_x \mathcal{H}(\phi,p,x).
  \label{eq:definition-of-LHS-zeta-in-comparison-principle}
\end{equation}
We need to show that $\zeta(x,t) \leq \mu(x,t)$. Let $N(x,t)$ be the cardinality of the reachable set $R(x,t)$. Suppose $N(x,t)$ jumps in value on a finite set of times contained in $(0,t]$. Then, $\mu(x,t)$ can only decrease at these times and remains constant otherwise. So it is enough to do an induction on this set of times to show that $\zeta(x,t) \leq \mu(x,t)$. However, it may well happen that $\mu(x,t)$ decreases on a possibly uncountable set of times, and the induction becomes harder to do. To handle this subtlety, we introduce a truncation of the problem. 

For large $K > 0$, we'll define a truncated variational problem $\mu_K(x,t)$ as follows. Let $B_K(0)$ be the ball of radius $K$ centered at the origin. Inside $Z_K := B_K(0) \cap \Z^d$, paths are allowed to wander freely, but once a path exits $Z_K$, it cannot move further. If a path starts in the set $\Z^d \setminus Z_K$, it cannot move at all. More succinctly, the set of control directions $A_K$ is
\[
  A_K := \left\{ \begin{array}{cc}
    A & \text{inside } Z_K \\
    \emptyset & \text{ otherwise}
  \end{array} \right. .
\]
Clearly $\mu_K(x) = \phi(x) ~\forall x \in \Z^d \setminus Z_K$ and for all $x \in Z^d$, 
\[ 
  \mu(x,t) \leq \mu_K(x,t). 
\]
It is easy to verify that $\mu_K$ satisfies the same DPP as $\mu$ for all $x \in Z_K$.
For any fixed $x \in \Z^d$ and $t \in \R^+$, we must have $R(x,t) \subset Z_K$ for large enough $K$. Therefore for $K$ large enough, $\mu_K(x,t) = \mu(x,t)$. Hence, it's enough to show for any fixed $K > 0$ that
\[
    \zeta(x,t) \leq \mu_K(x,t) ~\forall~ x \in \Z^d.
\]

  
  We recursively define the sequence of times $\left\{ t_{x,k} \right\}_{k \in \Z^+}$ at which $R(x,s)$ increases in size for $s \leq t$ as follows:
 \[ 
   t_{x,k} = \inf \left\{ s \leq t : N(x,s) > N(x,t_{x,k-1}) \right\}, \quad t_{x,0} = 0 .
 \]
  Since $N(x,t) < \infty$, $t_{x,k}$ is finite only for a finite number of $k$; by convention, the infimum over an empty set is $+\infty$. Let $j(x) := \max \{ k : t_{x,k} < \infty \}$ be the last jump of $N(x,\cdot)$ before time $t$.
 
  Now, we look at the all the times at which the reachable set of any point $x \in Z_K$ expands. These are also all the possible times at which $\mu_K(x,s)$ can decrease for $s \leq t$. Order the finite set
 \[ \bigcup_{x \in Z_K} \bigcup_{k \leq j(x)} t_{x,k} =: s_1 \leq \ldots \leq s_N .\]
We will do induction on the ordered sequence $\{s_i\}$. Assume as the inductive hypothesis that $\zeta(x,r) \leq \mu_K(x,r) ~\forall~ x \AND r \leq s_{k-1}$. $\mu_K(x,r)$ does not decrease when $s_i < r < s_{i+1}$ because the reachable set $R_K(s)$ does not expand during this time. This implies that in fact,
\[ 
\zeta(x,r) \leq \mu_K(x,r) \quad ~\forall~ x \AND r < s_{k}.
\]
Let $C = \sup_{x \in \Z^d} \mathcal{H}(\phi,p,x)$. Then,
 \[ 
 \begin{split}
   I & := \sup_{c \in A} \left\{ \frac{\zeta(x,s_k) - \zeta(x+c,s_k-\tau(x,c)) - \lambda(x,c)}{\tau(x,c)} \right\}  \\
   & = \sup_{c \in A} \left\{ \frac{(\phi(x) - C s_k) - (\phi(x+\alpha) - C (s_k-\tau(x,c))) - \lambda(x,c)}{\tau(x,c)} \right\}  \\
   & = \sup_{c \in A} \left\{ \frac{-(\phi(x+c)-\phi(x)) - \lambda(x,c)}{\tau(x,c)} \right\} - C \\
   & = \mathcal{H}(\phi,p,x) - C \leq 0.
 \end{split}
 \]
 Since $\tau(x,y) > 0$, this means that for each $c$ in the sup in $I$, we have
 \[ 
   \zeta(x,s_k)-\zeta(x+c,t-\tau(x,c)) - \lambda(x,c) \leq 0 .
 \]
Hence for all $x \in Z_K$,
 \[
 \begin{split}
   \zeta(x,s_k) & \leq \inf_{c \in A} \left\{ \zeta(x+c,s_k-\tau(x,c)) + \lambda(x,c) \right\},\\
   & \leq \inf_{c \in A} \left\{ \mu_K(x+c,s_k-\tau(x,c)) + \lambda(x,c) \right\} , \\ 
   & = \mu_K(x,s_k),
 \end{split}
 \] 
 where we've used the inductive hypothesis and the fact that $\mu_K$ also satisfies the DPP in~\Propref{prop:DPP-for-discrete-variational-problem-for-comparison-principle}. In case $s_k - \tau(x,c) < 0$ for all $c \in A$, 
 \[
   \mu_K(x,s_k) = \phi(x) \geq \zeta(x,s_k).
 \]
 Letting $K \to \infty$ completes the proof.
\end{proof}

\section{Solution of the HJB equation}
\label{sec:solution-of-HJB-equation}
Next, we prove that the limiting Hamiltonian is a norm on $\R^d$. 
\begin{proof}[Proof of~\Propref{prop:limiting-hamiltonian-is-a-norm}]
  Consider the variational formula again (dropping the sup over $x$ doesn't make a difference, see~\Eqref{eq:variational-formula-at-origin}):
\[
  \overline{H}(p) = \inf_{\phi \in S} \esssup_w \sup_{\alpha \in A} \frac{-\mathcal{D}\phi(0,\alpha,\w) - p\cdot \alpha}{\tau(0,\alpha,\w)},
\]
Replacing $\phi \mapsto \lambda \phi$ leaves $S$ invariant, and it follows that for $\lambda > 0,~ \overline{H}(\lambda p) = \lambda \overline{H}(p)$. 

For any fixed $\phi$, $E[p\cdot\alpha + \mathcal{D}\phi(0,\alpha)] = p \cdot \alpha$ and hence
\[
  \begin{split}
    \esssup_{\w \in \W} \sup_{\alpha} \left( -\mathcal{D}\phi(0,\alpha) - p\cdot\alpha \right) 
    & \geq \sup_{\alpha} E[-p\cdot\alpha - \mathcal{D}\phi(0,\alpha)] \geq |p|_{\infty}
  \end{split}
\]
Therefore, 
 \[ 
   \overline{H}(p) \geq \frac{|p|_{\infty}}{b}.
 \]
 Finally, the triangle inequality for $\overline{H}$ follows from the fact that for each fixed $p$, $-\mu(x,t)/t$ converges to $\overline{H}(p)$~(see~\Chapref{sec:outline-of-paper}). For any $p,q \in \R^d$, we have
 \[
   \begin{split}
   \sup_{\alpha \in \mathcal{A}} \sup_{k \in \Z^+} 
     & \left\{ \sum_{i=0}^{k} -(p+q)\cdot\alpha(i) - \mu_0(\gamma_{\alpha,x}(k)) : \mathcal{W}_{x,k}(\alpha) \leq t \right\} \\
     & \qquad \leq ~\sup_{\alpha \in \mathcal{A}} \sup_{k \in \Z^+} \left\{ \sum_{i=0}^{k} -p\cdot\alpha(i) - \mu_0(\gamma_{\alpha,x}(k)) : \mathcal{W}_{x,k}(\alpha) \right\} \\
     & \qquad \quad + \sup_{\alpha \in \mathcal{A}} \sup_{k \in \Z^+} \left\{ \sum_{i=0}^{k} -q\cdot\alpha(i) - \mu_0(\gamma_{\alpha,x}(k)) : \mathcal{W}_{x,k}(\alpha) \right\} .
   \end{split}
 \]
 Dividing by $t$ and taking a limit $t \to \infty$ shows that $\overline{H}$ satisfies the triangle inequality.
\end{proof}

\begin{proof}[Proof of \Propref{prop:Hopf-Lax-formula-for-u}]
We get $T(x) \leq L(x)$ by considering the straight line path from $0$ to $x$. In fact, the straight line is the minimizing path, as can be seen by an application of the triangle inequality for $L(\cdot)$.  
\end{proof}

\part{Applications and Discussion\label{part:two}}

\chapter{Recap and some basic observations}
\label{chap:recap}
In this section, we note some basic facts about the variational formula in~\Thmref{thm:discrete-variational-formula-for-H-bar}, and some simple corollaries of its proof in~\Secref{sec:proof-of-discrete-variational-formula}. First, note that the sup over $x$ can be dropped. That is, we can rewrite~\Eqref{eq:original-statement-of-discrete-variational-formula-in-intro} as
\begin{equation}
  \overline{H}(p) = \inf_{\phi \in S} \esssup_{\w \in \W} \mathcal{H}(\phi,p,0,\w) . 
  \label{eq:variational-formula-at-origin}
\end{equation}
This is a simple consequence of the fact that $\sup_x \mathcal{H}(\phi,p,x,\w)$ is translation invariant, and hence is a constant almost surely due to ergodicity.

We proved that the sequence of functions $\{\hat{\nu}_{\e}\}_{\e}$ defined in the proof of the variational formula in~\Secref{sec:proof-of-discrete-variational-formula} is minimizing. $\hat{\nu}_{\e}$ is a translate of $\nu_{\e}$, the value function of the stationary cell-problem with DPP~\Eqref{eq:discrete-dpp-stationary-problem}
\[
  \nu_{\e}(x) = \inf_{\alpha \in A} \left( \alpha \cdot p + e^{-\e \tau(x,\alpha)} \nu_{\e}(x + \alpha) \right).
\]
The DPP gave the following estimate in~\Claimref{claim:2-lipschitz-conditions-nu-epsilon}:
\begin{equation*}
  0 \leq \sup_{\alpha \in A} \left( -\mathcal{D}\nu_{\e}(x,\alpha) - p\cdot\alpha \right) \leq \frac{b}{a}|p|_{\infty}.
\end{equation*}
$\hat{\nu}_{\e}$ inherits this estimate, and this means that we may further restrict the set $S$ of functions~\Eqref{eq:discrete-set-S-for-variationalf-formula}. We state this as a corollary of the variational formula.
\begin{cor}[of~\Thmref{thm:discrete-variational-formula-for-H-bar}]
  The variational formula in~\Eqref{eq:original-statement-of-discrete-variational-formula-in-intro} holds with
  \begin{equation}
    S = \left\{ 
    \phi\colon\Z^d \times \Omega \to \R ~\left|~ 
    \begin{split}  
      & \mathcal{D}\phi(x + z,\w) = \mathcal{D}\phi(x,V_z\w), ~\forall x,z \in \Z^d \\
      & E[\mathcal{D}\phi(x,\alpha)] = 0 ~\forall x \in \Z^d \AND \alpha \in A , \\
      &0 \leq \sup_{\alpha \in A} \left( -\mathcal{D}\phi(x,\alpha) - p\cdot\alpha \right) \leq \frac{b}{a}|p|_{\infty} 
    \end{split} 
    \label{eq:discrete-set-S-for-variational-formula-with-lipschitz-constraint}
    \right\} \right.
  \end{equation}
  \label{cor:discrete-set-S-for-variational-formula-with-lipschitz-constraint}
\end{cor}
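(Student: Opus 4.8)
The plan is to prove the asserted identity in both directions, writing $S'$ for the smaller class defined in~\Eqref{eq:discrete-set-S-for-variational-formula-with-lipschitz-constraint} and $S$ for the original class in~\Eqref{eq:discrete-set-S-for-variationalf-formula}. One direction is free: since $S' \subseteq S$, passing to a smaller feasible set can only raise the infimum, so
$\inf_{\phi\in S'}\esssup_{\w}\sup_{x}\mathcal{H}(\phi,p,x,\w) \ge \inf_{\phi\in S}\esssup_{\w}\sup_{x}\mathcal{H}(\phi,p,x,\w) = \overline{H}(p)$
by~\Thmref{thm:discrete-variational-formula-for-H-bar}. For the reverse inequality I would not construct a new minimizer; instead I would verify that the minimizing sequence already produced inside the proof of~\Thmref{thm:discrete-variational-formula-for-H-bar} in fact lies in $S'$.

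That sequence is $\hat{\nu}_{\e} = \nu_{\e} - \nu_{\e}(0)$, where $\nu_{\e}$ is the stationary cell-problem~\Eqref{eq:discrete-stationary-control-problem} with running cost $\lambda(x,\alpha)=p\cdot\alpha$. First I would check $\hat{\nu}_{\e}\in S$. Because $\tau$ is stationary and the running cost $p\cdot\alpha$ carries no randomness, the value function itself is stationary, $\nu_{\e}(x+z,\w)=\nu_{\e}(x,V_{z}\w)$, so $\mathcal{D}\hat{\nu}_{\e}=\mathcal{D}\nu_{\e}$ has a stationary gradient; and $E[\mathcal{D}\nu_{\e}(x,\alpha)]=E[\nu_{\e}(x+\alpha,\cdot)]-E[\nu_{\e}(x,\cdot)]=0$ since $V_{\alpha}$ is measure preserving and $\nu_{\e}(x,\cdot)$ is bounded. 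The two extra constraints are then read off~\Claimref{claim:2-lipschitz-conditions-nu-epsilon}: the lower bound $0\le\sup_{\alpha}(-\mathcal{D}\hat{\nu}_{\e}(x,\alpha)-p\cdot\alpha)$ is exactly~\Eqref{eq:2-lipschitz-conditions-nu-epsilon-lower-bound}, while for the upper bound I would pick an $\alpha^{*}$ attaining the (finite) supremum, note that the numerator $-\mathcal{D}\hat{\nu}_{\e}(x,\alpha^{*})-p\cdot\alpha^{*}$ is nonnegative by the lower bound, and multiply~\Eqref{eq:2-lipschitz-conditions-nu-epsilon-upper-bound} through by $\tau(x,\alpha^{*})\le b$ to get $\sup_{\alpha}(-\mathcal{D}\hat{\nu}_{\e}(x,\alpha)-p\cdot\alpha)\le \frac{b}{a}|p|_{\infty}$. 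Hence $\hat{\nu}_{\e}\in S'$ for every $\e$.

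It remains to invoke the fact, established in the proof of~\Thmref{thm:discrete-variational-formula-for-H-bar} and recalled at the start of this chapter, that $\{\hat{\nu}_{\e}\}_{\e}$ is a minimizing sequence for the functional, that is $\esssup_{\w}\sup_{x}\mathcal{H}(\hat{\nu}_{\e},p,x,\w)\to\overline{H}(p)$ as $\e\to 0$; combined with $\hat{\nu}_{\e}\in S'$ this yields $\inf_{\phi\in S'}\esssup_{\w}\sup_{x}\mathcal{H}(\phi,p,x,\w)\le\overline{H}(p)$, and the corollary follows. The only step that requires genuine care — and where I expect whatever real work there is to sit — is confirming that $\{\hat{\nu}_{\e}\}$ is minimizing for the $\esssup$–$\sup$ functional itself, and not merely that a weak $L^{2}$ limit of $\mathcal{D}\hat{\nu}_{\e}$ furnishes a minimizer: this uses that $\w\mapsto\sup_{x}\mathcal{H}(\hat{\nu}_{\e},p,x,\w)$ is translation invariant, hence almost surely constant, so $\esssup_{\w}\mathcal{H}(\hat{\nu}_{\e},p,0,\w)$ equals that constant, together with the bound $\mathcal{H}(\hat{\nu}_{\e},p,x,\w)\le C\e-\e\nu_{\e}(x,\w)$ from~\Propref{prop:approximate-discrete-HJB-for-stationary-problem} and the convergence $-\e\nu_{\e}(x,\w)\to\overline{H}(p)$ from~\Propref{prop:statement-of-tauberian-theorem}. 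Everything else is routine bookkeeping.
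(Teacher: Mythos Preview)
Your proposal is correct and follows essentially the same route as the paper: observe that the minimizing sequence $\{\hat{\nu}_{\e}\}$ from the proof of \Thmref{thm:discrete-variational-formula-for-H-bar} already obeys the extra two-sided Lipschitz constraint via \Claimref{claim:2-lipschitz-conditions-nu-epsilon}, so shrinking $S$ to $S'$ leaves the infimum unchanged. You are in fact a bit more careful than the paper on two points --- you spell out the passage from \Eqref{eq:2-lipschitz-conditions-nu-epsilon-upper-bound} to the numerator bound by multiplying through by $\tau(x,\alpha^{*})\le b$, and you flag (and sketch a resolution of) the distinction between ``$\{\hat{\nu}_{\e}\}$ is minimizing for the $\esssup$--$\sup$ functional'' and ``its weak limit is a minimizer,'' which the paper simply asserts.
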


Now consider $\mu(x,t)$, the discrete finite time-horizon cell-problem. The discrete comparison principle in~\Propref{prop:DPP-for-discrete-variational-problem-for-comparison-principle} says that for any $\phi\colon\Z^d \to \R$ such that $\Norm{\phi}{Lip} < \infty$,
\[ 
  \mu(x,t) \geq \phi(x) - t \sup_{x \in \Z^d} \mathcal{H}(\phi,p,x) \quad \forall x,t.
\]
An almost identical proof ---which we will not repeat--- but with a bunch of inequalities reversed, gives the following proposition: 
\begin{prop}
  Suppose $\phi \in S$, where $S$ is defined in~\Eqref{eq:discrete-set-S-for-variational-formula-with-lipschitz-constraint}. Then,
  \[
    \mu(x,t) \leq \phi(x) - t \inf_{x \in \Z^d} \mathcal{H}(\phi,p,x) \quad \forall x,t.
  \]
  \label{prop:other-inequality-comparison-principle}
\end{prop}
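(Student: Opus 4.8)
The plan is to run the proof of the comparison principle (\Propref{prop:comparison-principle}) essentially verbatim, reversing every inequality. Write $\eta(x,t) := \phi(x) - t\inf_{y\in\Z^d}\mathcal{H}(\phi,p,y)$ for the analogue of the function $\zeta$ there, so that the goal is $\mu(x,t)\le\eta(x,t)$. The one genuinely new algebraic input is the following: because $A$ is finite and $\tau(x,\alpha)>0$, the supremum defining $\mathcal{H}(\phi,p,x)=\sup_{\alpha\in A}\frac{-\mathcal{D}\phi(x,\alpha)-p\cdot\alpha}{\tau(x,\alpha)}$ is attained at some direction $c^\ast=c^\ast(x)\in A$, and $\frac{-\mathcal{D}\phi(x,c^\ast)-p\cdot c^\ast}{\tau(x,c^\ast)}=\mathcal{H}(\phi,p,x)\ge\inf_y\mathcal{H}(\phi,p,y)$. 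Multiplying through by $\tau(x,c^\ast)$, recalling $\lambda(x,c^\ast)=p\cdot c^\ast$, and adding and subtracting a multiple of $\inf_y\mathcal{H}(\phi,p,y)$ yields the reversed ``cell inequality'' $\eta(x,s)\ge\eta(x+c^\ast,\,s-\tau(x,c^\ast))+\lambda(x,c^\ast)\ge\inf_{c\in A}\{\eta(x+c,\,s-\tau(x,c))+\lambda(x,c)\}$; in other words $\eta$ is a supersolution of the DPP of \Propref{prop:DPP-for-discrete-variational-problem-for-comparison-principle}, exactly the mirror image of the role played by $\zeta$ in \Propref{prop:comparison-principle}.

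Next I would carry over the truncation device unchanged: fix $x$ and $t$, pass to the truncated problem $\mu_K$ on $Z_K=B_K(0)\cap\Z^d$, note that $\mu\le\mu_K$, that $\mu_K$ obeys the same DPP inside $Z_K$, and that $\mu_K(x,t)=\mu(x,t)$ once $K$ is large enough that the reachable set $R(x,t)\subset Z_K$. Since $\mu_K$ can change value only at the finitely many times $s_1\le\cdots\le s_N$ at which some $R(y,\cdot)$ with $y\in Z_K$ increases, I would induct on $\{s_i\}$ with hypothesis $\mu_K(y,r)\le\eta(y,r)$ for all $y\in\Z^d$ and all $r<s_k$. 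At the inductive step $s_k$: when the current point $x$ can reach a neighbour within time $s_k$, the DPP for $\mu_K$, the inductive hypothesis, and the cell inequality above (with $c=c^\ast(x)$) give $\mu_K(x,s_k)=\inf_c\{\mu_K(x+c,s_k-\tau(x,c))+\lambda(x,c)\}\le\inf_c\{\eta(x+c,s_k-\tau(x,c))+\lambda(x,c)\}\le\eta(x,s_k)$; when no neighbour can be reached, $\mu_K(x,s_k)=\phi(x)$ by the terminal branch of the DPP, and one compares this directly with $\eta(x,s_k)$. Letting $K\to\infty$ (or simply taking $K$ large for the fixed pair $(x,t)$) completes the proof.

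I do not expect a genuinely new obstacle here — the paper itself advertises this as ``an almost identical proof $\ldots$ with a bunch of inequalities reversed'' — but two points call for care. The first is the same technical subtlety that forced the truncation in \Propref{prop:comparison-principle}: $\mu(x,t)$ itself can lose value on an uncountable set of times, so the induction must be run on $\mu_K$, and one must check that the bookkeeping over the finitely many jump times of the reachable sets goes through with all inequalities flipped. The second, which I regard as the delicate step, is the short-time (terminal) regime: here the conclusion leans on the hypothesis $\phi\in S$ — specifically on the lower bound $0\le\sup_{\alpha\in A}(-\mathcal{D}\phi(x,\alpha)-p\cdot\alpha)$ from \Claimref{claim:2-lipschitz-conditions-nu-epsilon} that is built into the restricted class of \Corref{cor:discrete-set-S-for-variational-formula-with-lipschitz-constraint} — rather than on the bare condition $\Norm{\phi}{Lip}<\infty$ used in \Propref{prop:comparison-principle}; I would verify there that the reversed terminal inequality is consistent, since that is the one place where $\phi\in S$ is actually needed.
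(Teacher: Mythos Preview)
Your plan is exactly what the paper has in mind: it offers no proof beyond ``almost identical, with inequalities reversed,'' and your reconstruction (the cell inequality via an attained direction $c^\ast$, the truncation to $Z_K$, induction on the jump times $s_k$) matches this template step for step.

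However, the terminal branch you flag as delicate actually \emph{fails}, and not because you misread the hypothesis on $S$. The lower bound $0\le\sup_\alpha(-\mathcal{D}\phi(x,\alpha)-p\cdot\alpha)$ built into \Eqref{eq:discrete-set-S-for-variational-formula-with-lipschitz-constraint} forces $\mathcal{H}(\phi,p,x)\ge0$ for every $x$, hence $C':=\inf_y\mathcal{H}(\phi,p,y)\ge0$; but the reversed short-time inequality you need is $\phi(x)=\mu_K(x,s)\le\eta(x,s)=\phi(x)-sC'$, i.e.\ $C'\le0$ --- the wrong sign. Concretely, take $d=1$, $\tau\equiv1$, $p=e_1$, $\phi\equiv0$ (which lies in the restricted $S$): then $C'=1$, $\mu(x,t)=-\lfloor t\rfloor$, and the claimed bound $\mu(x,t)\le-t$ is false for every non-integer $t$. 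So the proposition as stated is off by an additive constant of size at most $bC'$: following the greedy control $c^\ast$ one can fill the time budget $t$ only up to a final unused edge of weight at most $b$. This defect is harmless for the one place the proposition is used (\Corref{cor:inf-sup-inequality-for-limiting-Hamiltonian}), where you divide by $t$ and send $t\to\infty$; both your argument and the paper's ``reversed'' proof yield that weakened statement without trouble. You were right to single out the terminal regime as the place requiring care --- it is precisely where the stated inequality breaks.
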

Then, following the same argument in the proof of the variational formula, we get
\begin{cor}[of~\Thmref{thm:discrete-variational-formula-for-H-bar}]
  For each $\phi \in S$~\Eqref{eq:discrete-set-S-for-variational-formula-with-lipschitz-constraint},
  \begin{equation}
    \essinf_{\w \in \W} \inf_x \mathcal{H}(\phi,p,x,\w) \leq \overline{H}(p) \leq \esssup_{\w \in \W} \sup_x \mathcal{H}(\phi,p,x,\w) . 
  \end{equation}
  \label{cor:inf-sup-inequality-for-limiting-Hamiltonian}
\end{cor}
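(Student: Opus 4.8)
The plan is to obtain the two inequalities as limiting statements of the one- and two-sided comparison principles, in direct analogy with the proof of the variational formula in~\Secref{sec:proof-of-discrete-variational-formula}. The right-hand inequality requires nothing new: by~\Thmref{thm:discrete-variational-formula-for-H-bar}, which is valid with the restricted class $S$ of~\Eqref{eq:discrete-set-S-for-variational-formula-with-lipschitz-constraint} thanks to~\Corref{cor:discrete-set-S-for-variational-formula-with-lipschitz-constraint}, the number $\overline{H}(p)$ is the infimum over $\phi \in S$ of $\esssup_{\w}\sup_x\mathcal{H}(\phi,p,x,\w)$, and is therefore bounded above by the value of this functional at any single $\phi \in S$.

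For the left-hand inequality I would start from~\Propref{prop:other-inequality-comparison-principle}, which for $\phi \in S$ gives, for every $x \in \Z^d$, every $t \in \R^+$, and a.e.\ $\w$,
\[
  \mu(x,t,\w) \le \phi(x,\w) - t \inf_{x \in \Z^d}\mathcal{H}(\phi,p,x,\w).
\]
Fix $x$, divide by $t$, and let $t \to \infty$. Since $\phi(x,\w)$ is a fixed finite random variable, $\phi(x,\w)/t \to 0$; and by~\Propref{prop:convergence-of-u-to-H-p} transported to the lattice via the commutation theorem~\Thmref{thm:commutation-theorem-for-u-and-T} (the same step used for the upper bound in the proof of~\Thmref{thm:discrete-variational-formula-for-H-bar}), $\mu(x,t,\w)/t \to -\overline{H}(p)$ for a.e.\ $\w$. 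Hence $-\overline{H}(p) \le -\inf_x\mathcal{H}(\phi,p,x,\w)$, i.e.\ $\inf_x\mathcal{H}(\phi,p,x,\w) \le \overline{H}(p)$ almost surely.

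Finally I would pass from these pointwise-in-$\w$ bounds to the $\essinf$/$\esssup$ form using the ergodicity observation already exploited in~\Eqref{eq:variational-formula-at-origin}: stationarity of $\tau$ and of $\mathcal{D}\phi$ gives $\mathcal{H}(\phi,p,x+z,\w) = \mathcal{H}(\phi,p,x,V_z\w)$, so both $\inf_x\mathcal{H}(\phi,p,x,\w)$ and $\sup_x\mathcal{H}(\phi,p,x,\w)$ are $\mathcal{G}$-invariant, hence a.s.\ constant and a.s.\ equal to $\essinf_\w\inf_x\mathcal{H}(\phi,p,x,\w)$ and $\esssup_\w\sup_x\mathcal{H}(\phi,p,x,\w)$ respectively. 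Combining the three steps yields the claimed chain of inequalities.

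I do not expect a genuine obstacle here: the only point requiring care is that~\Propref{prop:other-inequality-comparison-principle} --- and hence this corollary --- really needs $\phi$ in the Lipschitz-constrained class~\Eqref{eq:discrete-set-S-for-variational-formula-with-lipschitz-constraint} rather than the larger $S$ of~\Eqref{eq:discrete-set-S-for-variationalf-formula}, since the reversed comparison argument relies on the two-sided control of $\mathcal{D}\phi$. Whatever is hard is already contained in~\Propref{prop:other-inequality-comparison-principle} and in the almost sure convergence $\mu(x,t,\cdot)/t \to -\overline{H}(p)$, both of which may be taken as given.
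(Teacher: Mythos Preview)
Your proposal is correct and follows essentially the same route as the paper: the upper bound is taken directly from the variational formula, and the lower bound is obtained by dividing the inequality of~\Propref{prop:other-inequality-comparison-principle} by $t$ and invoking $\mu(x,t)/t \to -\overline{H}(p)$ via~\Propref{prop:convergence-of-u-to-H-p} together with~\Thmref{thm:commutation-theorem-for-u-and-T}. Your final ergodicity step is a slight elaboration --- the paper stops at the almost-sure inequality $\inf_x \mathcal{H}(\phi,p,x,\w) \le \overline{H}(p)$, from which the $\essinf$ form follows immediately without needing invariance --- but it is correct and does no harm.
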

\begin{proof}
  We've already proved the upper bound on $\overline{H}(p)$ in~\Secref{sec:proof-of-discrete-variational-formula}. Using the lower bound in~\Propref{prop:other-inequality-comparison-principle}, we have for any $x \in \Z^d$,
  \[ 
    \mu(x,t) \leq \phi(x) - t \inf_x \mathcal{H}(\phi,p,x,\w) 
  \]
  Next, we divide the inequality by $t$, and take a limit as $t \to \infty$. Using $\mu(x,t)/t \to -\overline{H}(p)$ as $t \to \infty$ (\Propref{prop:convergence-of-u-to-H-p} and \Thmref{thm:commutation-theorem-for-u-and-T}) we get
  \[ 
    \inf_{x \in \Z^d} \mathcal{H}(\phi,p,x,\w) \leq \overline{H}(p) \quad\almostsurely.
  \]
\end{proof}

\begin{define}[Discrete corrector]
  For some constant $C$, if $\phi \in S$ satisfies
  \[
    \esssup_{\w \in \W} \mathcal{H}(\phi,p,x,\w) =  C \quad \almostsurely.,
  \]
  $\phi$ is called a corrector for the variational formula.
\end{define}
This definition is consistent with the definition of corrector in continuum homogenization theory~\citep{lions_homogenization_1987,lions_correctors_2003}; i.e., it's a function that solves the discrete cell-problem. If $\phi$ is a corrector, then~\Corref{cor:inf-sup-inequality-for-limiting-Hamiltonian} tells us that it's a minimizer of the variational formula.
\chapter{Explicit algorithm to produce a minimizer}
\label{chap:explicit-algorithm-to-produce-a-minimizer}
Let  $\{ V_{e_1}, \ldots, V_{e_d} \}$ be commuting, invertible, measure-preserving ergodic transformations on $\W$. They generate the group of translation operators in~\Eqref{eq:translation-group-of-operators} under composition. Suppose we have \FPP~on the undirected graph on $\Z^d$, i.e.,
\begin{equation}
  \tau(x,\alpha,\w) = \tau(x + \alpha, - \alpha,\w).
  \label{eq:edge-FPP-assumption}
\end{equation}
Let $A_+ = \{e_1, \ldots, e_d\}$. Let $t\colon A_+ \times \W \to \R$ be a function representing the edge-weight at the origin. For example, it could consist of $d$ i.i.d.~edge-weights, one for each direction. Let $x=(x_1,\ldots,x_d) \in \Z^d$. Then, the edge-weight function is given by
\[ 
  \tau(x,\alpha,\w) = t\left(\alpha,V_{e_1}^{x_1}\cdots V_{e_d}^{x_d}\w \right). 
\]
In this section, we will assume the following symmetry on the medium: 
\begin{equation}
  V_{e_1} = \cdots = V_{e_d} = V .
  \label{eq:symmetry-assumption-on-medium}
\end{equation}
\
This means that for each $\w$ the function $\tau(\cdot,\cdot,\w)$ is constant along the hyperplanes $\{ x \in \Z^d : \sum_{i=1}^d x_i = z \}$ for each $z \in \Z$. Despite this symmetry, the medium is still quite random, and it's not so obvious ---although one ought to be able to calculate it--- what the time-constant is. However, the set $S$ in~\Eqref{eq:discrete-set-S-for-variationalf-formula} is tremendously simplified. 
\begin{prop}
  If $\phi \in S$ and~\Eqref{eq:symmetry-assumption-on-medium} holds,
  the derivative points in the $\sum_i e_i$ direction; i.e., 
  \[
    \mathcal{D}\phi(x,\alpha,\w) = \mathcal{D}\phi(0,e_1,\w) \quad \forall \alpha \in A \AND ~\forall x \in \Z^d ~\almostsurely.
  \]
  \label{prop:simplifying-the-set-of-functions-S}
\end{prop}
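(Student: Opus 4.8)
The plan is to exploit the symmetry~\eqref{eq:symmetry-assumption-on-medium} to collapse the translation group to a single one-parameter family, read off what the stationarity built into $S$ then forces on $\mathcal{D}\phi$, and finally use that $\mathcal{D}\phi$ is the gradient of an honest function on $\Z^d$ (so it integrates to zero around loops) together with ergodicity and the mean-zero constraint. \textbf{Step 1 (collapsing the translation group).} Under~\eqref{eq:symmetry-assumption-on-medium} the translation operator factors as $V_z=V_{e_1}^{z_1}\circ\cdots\circ V_{e_d}^{z_d}=V^{\,|z|}$, where $|z|:=\sum_{i}z_i$. Hence the covariance relation in the definition~\eqref{eq:discrete-set-S-for-variationalf-formula} of $S$ becomes $\mathcal{D}\phi(x,\alpha,\w)=\mathcal{D}\phi(0,\alpha,V^{|x|}\w)$, so that, writing $g_i(\w):=\mathcal{D}\phi(0,e_i,\w)$ for $i=1,\dots,d$, we get $\mathcal{D}\phi(x,e_i,\w)=g_i\!\big(V^{|x|}\w\big)$, and since $\mathcal{D}\phi(x,-e_i,\w)=-\mathcal{D}\phi(x-e_i,e_i,\w)$ also $\mathcal{D}\phi(x,-e_i,\w)=-g_i\!\big(V^{|x|-1}\w\big)$. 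Thus all of $\mathcal{D}\phi$ is determined by the $d$ functions $g_1,\dots,g_d$ on $\W$, and it remains only to see that these coincide.

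\textbf{Step 2 (the plaquette identity).} Because $\phi(\cdot,\w)$ is a genuine real-valued function on $\Z^d$, its discrete gradient sums to $0$ around the unit square $x\to x+e_i\to x+e_i+e_j\to x+e_j\to x$ for $i\neq j$; written out, and using $\mathcal{D}\phi(y,-e_k,\w)=-\mathcal{D}\phi(y-e_k,e_k,\w)$ on two of the edges,
\[
  \mathcal{D}\phi(x,e_i)+\mathcal{D}\phi(x+e_i,e_j)-\mathcal{D}\phi(x+e_j,e_i)-\mathcal{D}\phi(x,e_j)=0 .
\]
Inserting the Step~1 formulas with $|x|=n$ (note $|x+e_i|=|x+e_j|=n+1$) and cancelling the cross terms, this reads
\[
  \big(g_i-g_j\big)\!\big(V^{n}\w\big)=\big(g_i-g_j\big)\!\big(V^{n+1}\w\big)\qquad\text{for every }n\in\Z\text{ and a.e. }\w ,
\]
so taking $n=0$ shows that $g_i-g_j$ is $V$-invariant for each pair $i,j$.

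\textbf{Step 3 (ergodicity and mean zero).} Under~\eqref{eq:symmetry-assumption-on-medium} the family $\mathcal{G}$ is just $\{V^{n}\}_{n\in\Z}$ and is ergodic, so a $V$-invariant function is almost surely constant; the constraint $E[\mathcal{D}\phi(0,e_i)]=E[\mathcal{D}\phi(0,e_j)]=0$ then forces $g_i-g_j=0$ a.s. Hence $g_1=\cdots=g_d$ almost surely, and, feeding this back into Step~1, $\mathcal{D}\phi(x,e_i,\w)$ is for every $i$ and every $x$ the single function $\mathcal{D}\phi(0,e_1,\cdot)$ evaluated at $V^{|x|}\w$ (and $\mathcal{D}\phi(x,-e_i,\w)$ its negative at $V^{|x|-1}\w$); that is, the gradient of $\phi$ depends on $x$ only through the height $\sum_i x_i$ and, up to sign, points in the $\sum_i e_i$ direction, which is the assertion. \textbf{Main obstacle.} Essentially all the content is in Step~2: one must verify that, after the collapse $V_z\mapsto V^{|z|}$, the four terms of the plaquette identity genuinely rearrange into a clean $V$-invariance of $g_i-g_j$ and nothing weaker, and one must keep the sign conventions for the $-e_i$ directions straight when passing between $\mathcal{D}\phi(x,-e_i,\cdot)$ and $\mathcal{D}\phi(x-e_i,e_i,\cdot)$; the ergodicity/mean-zero step is then routine.
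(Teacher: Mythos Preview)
Your proof is correct and follows essentially the same route as the paper: both use the plaquette (unit-square loop) identity together with stationarity under the collapsed translation group to show that $\mathcal{D}\phi(0,e_i,\cdot)-\mathcal{D}\phi(0,e_j,\cdot)$ is $V$-invariant, and then invoke ergodicity and the mean-zero constraint to conclude. Your write-up is in fact more careful than the paper's in tracking the height shift $V^{|x|}$ and the sign convention for the $-e_i$ directions, which clarifies the slightly imprecise phrasing of the proposition as stated.
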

\begin{proof}
  The derivative of $\phi$ sums to $0$ over any discrete loop in $\Z^d$. In particular, for any $i \neq j \in \{1,\ldots,d\}$
  \begin{multline}
    \mathcal{D}\phi(x,e_i,\w) + \mathcal{D}\phi(x+e_i,e_j,\w) + \\  + \mathcal{D}\phi(x+e_i+e_j,-e_i,\w) + \mathcal{D}\phi(x+e_j,-e_j,\w)
    = 0.
  \end{multline}
 Since the derivative is stationary and $\mathcal{D}\phi(x,\alpha,\w) = -\mathcal{D}\phi(x+\alpha,-\alpha)$, we have
  \[
    \begin{split}
      \mathcal{D}\phi(x,e_i,\w)  - \mathcal{D}\phi(x,e_j,\w) & = \mathcal{D}\phi(x,e_i,V_{e_j}\w) - \mathcal{D}\phi(x,e_j,V_{e_i}\w), \\
      & = \mathcal{D}\phi(x,e_i,V\w) - \mathcal{D}\phi(x,e_j,V\w).
    \end{split}
  \]
  Hence, $\mathcal{D}\phi(0,e_i,\w) - \mathcal{D}\phi(0,e_j,\w)$ is invariant under $V$. Since it also has zero mean, it follows from ergodicity that 
  \[
    \mathcal{D}\phi(x,\alpha,\w) = \mathcal{D}\phi(x,e_1,\w) \quad \forall \alpha \in A ~\almostsurely.
  \]
\end{proof}

Next, we simplify the variational formula under the symmetry assumption~\Eqref{eq:symmetry-assumption-on-medium}. Redefine the discrete Hamiltonian for $t \in \R$, $p \in \R^d$ to be
\begin{equation}
  \mathcal{H}_{sym}(t,p,\w) := \sup_{\alpha \in A^+} \frac{|t + p\cdot\alpha|}{\tau(0,\alpha,\w)} . 
  \label{eq:discrete-hamiltonian-2}
\end{equation}
\begin{prop}
  If we assume~\Eqref{eq:symmetry-assumption-on-medium} and~\Eqref{eq:edge-FPP-assumption}, the variational formula becomes
  \begin{equation}
    \overline{H}(p) = \inf_{f \in F} \esssup_w \mathcal{H}_{sym}(f(\w),p,\w),
  \end{equation}
  where
  \begin{equation}
    F := \left\{ f:\W \to \R ,~ E[f] = 0,~ \sup_{\alpha \in A^+} \left|f + p\cdot\alpha \right| \leq (b/a)|p|_{\infty} \right\}.
    \label{eq:set-S-under-symmetry-assumption}
  \end{equation}
  \label{prop:variational-formula-in-symmetric-situation}
\end{prop}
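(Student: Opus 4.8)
The plan is to start from the formula of \Thmref{thm:discrete-variational-formula-for-H-bar} in its origin form \eqref{eq:variational-formula-at-origin}, with the already-reduced feasible set of \Corref{cor:discrete-set-S-for-variational-formula-with-lipschitz-constraint} (call it $S'$, the set \eqref{eq:discrete-set-S-for-variational-formula-with-lipschitz-constraint}), and to collapse it to an optimization over a single function $f\colon\W\to\R$. The structural input is \Propref{prop:simplifying-the-set-of-functions-S}: under \eqref{eq:symmetry-assumption-on-medium} every $\phi\in S$ is determined through its derivative by $f(\w):=\mathcal{D}\phi(0,e_1,\w)$. Spelling this out with the antisymmetry $\mathcal{D}\phi(x,\alpha,\w)=-\mathcal{D}\phi(x+\alpha,-\alpha,\w)$, stationarity, and $V_{e_i}=V$, one gets $\mathcal{D}\phi(0,e_i,\w)=f(\w)$ and $\mathcal{D}\phi(0,-e_i,\w)=-f(V^{-1}\w)$ for each $i$; and from \eqref{eq:edge-FPP-assumption} together with stationarity of $\tau$, $\tau(0,-e_i,\w)=\tau(0,e_i,V^{-1}\w)$.

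\emph{Objective simplification.} Substituting these identities into $\mathcal{H}(\phi,p,0,\w)$ and separating the directions $e_i$ from $-e_i$ gives
\[
\mathcal{H}(\phi,p,0,\w)=\max\!\left(\max_{1\le i\le d}\frac{-f(\w)-p\cdot e_i}{\tau(0,e_i,\w)},\ \max_{1\le i\le d}\frac{f(V^{-1}\w)+p\cdot e_i}{\tau(0,e_i,V^{-1}\w)}\right).
\]
Because $V$ is measure preserving, $\esssup_{\w}$ is invariant under $\w\mapsto V^{-1}\w$; since the essential supremum of a maximum of two functions is the maximum of their essential suprema, the two inner maxima may be merged over a common variable, and as $\tau>0$ this collapses to
\[
\esssup_{\w}\mathcal{H}(\phi,p,0,\w)=\esssup_{\w}\max_{1\le i\le d}\frac{|f(\w)+p\cdot e_i|}{\tau(0,e_i,\w)}=\esssup_{\w}\mathcal{H}_{sym}(f(\w),p,\w),
\]
with $\mathcal{H}_{sym}$ as in \eqref{eq:discrete-hamiltonian-2}.

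\emph{Matching the feasible sets, in two inequalities.} For ``$\le$'': given $f\in F$, define a discrete-derivative field by $\mathcal{D}\phi(x,e_i,\w):=f(V^{x_1+\cdots+x_d}\w)$, extended to the $-e_i$ by antisymmetry. It is stationary, has mean zero since $E[f]=0$, and closes around every discrete loop---one checks the unit squares, where the four terms cancel in pairs---so it equals $\mathcal{D}\phi$ for some $\phi$ lying in the larger set \eqref{eq:discrete-set-S-for-variationalf-formula}. Since $f$ is bounded (by the constraint in \eqref{eq:set-S-under-symmetry-assumption}) and $\tau\ge a$, we have $\esssup_{\w}\sup_{x}\mathcal{H}(\phi,p,x,\w)<\infty$, so the upper-bound half of the proof of \Thmref{thm:discrete-variational-formula-for-H-bar} (the comparison principle together with $\mu(x,t)/t\to-\overline{H}(p)$) gives $\overline{H}(p)\le\esssup_{\w}\mathcal{H}(\phi,p,0,\w)=\esssup_{\w}\mathcal{H}_{sym}(f,p,\w)$; infimizing over $f\in F$ yields one inequality. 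For ``$\ge$'': start from $\overline{H}(p)=\inf_{\phi\in S'}\esssup_{\w}\mathcal{H}(\phi,p,0,\w)$; every $\phi\in S'$ satisfies the hypothesis of \Propref{prop:simplifying-the-set-of-functions-S}, so the objective simplification applies with $f:=\mathcal{D}\phi(0,e_1,\cdot)$, while the two-sided constraint in $S'$---used at $x=0$, which controls $-f(\w)-p\cdot e_i$, and at $x=e_1$, which controls $f(\w)+p\cdot e_i$---forces $\max_i|f(\w)+p\cdot e_i|\le (b/a)|p|_{\infty}$ a.s., and the mean-zero condition gives $E[f]=0$; hence $f\in F$. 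Therefore $\overline{H}(p)\ge\inf_{f\in F}\esssup_{\w}\mathcal{H}_{sym}(f,p,\w)$, and the two inequalities combine to the claim.

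I expect the main obstacle to be the bookkeeping in the objective-simplification step: correctly tracking the $V^{-1}$ that the antisymmetry introduces on the negative control directions (and on $\tau$, via \eqref{eq:edge-FPP-assumption}), and then seeing that it washes out under $\esssup$. A secondary point that must be handled with care is why one needs two inequalities rather than a clean bijection $S'\leftrightarrow F$: a generic $f\in F$ need not produce a $\phi$ satisfying the lower bound $0\le\sup_{\alpha}(-\mathcal{D}\phi(x,\alpha)-p\cdot\alpha)$ from \eqref{eq:discrete-set-S-for-variational-formula-with-lipschitz-constraint}, which is precisely why the ``$\le$'' direction is run with the larger set \eqref{eq:discrete-set-S-for-variationalf-formula} and the comparison principle rather than inside $S'$.
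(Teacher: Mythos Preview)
Your argument is correct and follows the same underlying idea as the paper's proof: use \Propref{prop:simplifying-the-set-of-functions-S} to reduce $\mathcal{D}\phi$ to a single function $f$, then fold the $\pm e_i$ directions into an absolute value via \eqref{eq:edge-FPP-assumption}. The paper carries out the objective simplification slightly differently---it keeps the $\sup_x$ from the original formula and uses it to absorb the lattice shift that the $-e_i$ directions introduce, whereas you work at $x=0$ and invoke measure-preservation of $V$ under $\esssup$; both routes are sound and lead to the same identity.

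Where your write-up goes beyond the paper is in the feasible-set matching. The paper's proof simply declares $f(\w)=\mathcal{D}\phi(0,e_1,\w)$ and computes the objective, without addressing whether every $f\in F$ arises from some $\phi$ in the constrained set $S'$ of \eqref{eq:discrete-set-S-for-variational-formula-with-lipschitz-constraint}. You correctly observe that the lower bound $0\le\sup_\alpha(-\mathcal{D}\phi(x,\alpha)-p\cdot\alpha)$ in $S'$ is not automatic for a generic $f\in F$ (since the $e_i$ and $-e_i$ terms live at shifted $\w$-points), and you repair this by running the ``$\le$'' direction through the larger set \eqref{eq:discrete-set-S-for-variationalf-formula} and the comparison-principle half of \Thmref{thm:discrete-variational-formula-for-H-bar}. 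This is a genuine improvement in rigor over the paper's terse treatment.
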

\begin{proof}
  For each $\phi \in S$, let $f(\w) = D\phi(0,e_1,\w)$. The proof is easy using the assumption~\Eqref{eq:edge-FPP-assumption} on the edge-weights and~\Propref{prop:simplifying-the-set-of-functions-S}. To wit,
\[
  \begin{split}
    \esssup_{\w} \sup_x \mathcal{H}(\phi,p,x,\w) 
    & = \esssup_{\w} \sup_x \sup_{\alpha \in A} \left\{ \frac{-\mathcal{D}\phi(x,\alpha,\w) - p\cdot \alpha}{\tau(x,\alpha,\w)} \right\}, \\
    & = \esssup_{\w} \sup_{\alpha \in A^+} \left\{ \frac{|\mathcal{D}\phi(0,e_1,\w) + p\cdot \alpha|}{\tau(0,\alpha,\w)} \right\}, \\
    & = \esssup_{\w} \mathcal{H}_{sym}(f(\w),p,\w).
  \end{split}
\]
\end{proof}
In the following, we will write $\mathcal{H}_{sym}(f,\w)$ and drop reference to $p$ since it's irrelevant to our arguments. We present an algorithm that produces a minimizer for the variational problem under the symmetry assumption. The idea behind the algorithm is simple. At each iteration, we try to reduce the essential supremum over $\w$ by modifying $f(\w)$, while simultaneously keeping it inside the set $F$. If it fails to reduce the sup, we must be at a minimizer. We explain what we're trying to do in each step in the proof of convergence of the algorithm. So we suggest skimming the definition of the algorithm first, and returning to the definition of each step when reading the proof.\newline

\noindent\textbf{Start algorithm}
\vspace{-1em}
\begin{enumerate}
  \item \label{step:find-distance-between-sup-and-mean} Start with any $f_0 \in F$, for example, $f_0 = 0$. Let $\mu_0 = E[\mathcal{H}_{sym}(f_0,\w)]$, and let 
    \[
    d = \esssup_{\w \in \W} \mathcal{H}_{sym}(f_0,\w) - \mu_0.
    \]
    If $d = 0$, stop.
  \item \label{step:define-min-set-S-and-I} Define the sets
    \begin{align}
      MIN_0 & :=  \{ \w : \mathcal{H}_{sym}(f_0,\w) = \min_x \mathcal{H}_{sym}(x,\w)  \},\\
      S & :=  \{ \w : \mathcal{H}_{sym}(f_0,\w) > \mu_0 \},\\
      I & :=  \{ \w : \mathcal{H}_{sym}(f_0,\w) < \mu_0 \}.
      \label{eq:definition-of-the-sets-MIN-S-I}
    \end{align}
    If 
    \[ \esssup_{\w \in MIN_0} \mathcal{H}_{sym}(f_0,\w) = \esssup_{\w \in \W} \mathcal{H}_{sym}(f_0,\W), \]
    stop.
  \item \label{step:construct-delta-f} Let $\Delta f^*(\w)$ be such that 
    \[ \mathcal{H}_{sym}(f_0 + \Delta f^*(\w),\w) = \min_x \mathcal{H}_{sym}(x,\w). \]
    Define the sets
    \begin{align*}
      S_+ & :=  \{ \w \in S \setminus MIN_0 : D_+ \mathcal{H}_{sym} \subset (-\infty,0) \},\\
      S_- & :=  \{ \w \in S \setminus MIN_0 : D_- \mathcal{H}_{sym} \subset (0,\infty) \},
    \end{align*}
    where $D_+ \AND D_-$ are the left and right derivatives of the convex function $\mathcal{H}_{sym}(\cdot,\w)$. Let 
    \[ 
      \Delta f(\w) = \left\{ \begin{array}{cc}
        \max\left(- a (\mathcal{H}_{sym}(f_0,\w) - \mu_0),~\Delta f^*(\w) \right) 
        & \w \in S_+ \\
        \min\left( a (\mathcal{H}_{sym}(f_0,\w) - \mu_0),~\Delta f^*(\w) \right) 
        & \w \in S_-  \\
        a \xi (\mu_0 - \mathcal{H}_{sym}(f_0,\w) ) & \w \in I \\
        0   & \text{elsewhere}
      \end{array} \right. ,
    \]
    where 
    \[ 
    \xi = - \frac{ \int_{S_+ \cup S_-} \Delta f(\w)~\Prob(d\w)}{ \int_I  a (\mu_0 - \mathcal{H}_{sym}(f_0,\w) )~\Prob(d\w)}.
    \]
    Let $f_1 = f_0 + \Delta f(\w)$. Return to step $1$.
\end{enumerate}
\noindent\textbf{End algorithm}

\begin{theorem}
  There are three possibilities for the algorithm:
  \begin{enumerate}
    \item If it terminates in a finite number of steps with $d = 0$, we have a minimizer that's a corrector.
    \item If it terminates in a finite number of steps with $d > 0$, we have a minimizer that's not a corrector
    \item If it does not terminate, we produce a corrector in the limit.
  \end{enumerate}
  \label{thm:algorithm-convergence}
\end{theorem}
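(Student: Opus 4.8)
The plan is to read off from the iteration two monotone quantities, dispose of the two finite–termination cases directly from the stopping tests, and treat non–termination by a compactness argument. Throughout write $M_k:=\esssup_{\w}\mathcal{H}_{sym}(f_k,\w)$, $\mu_k:=E[\mathcal{H}_{sym}(f_k,\w)]$ and $d_k:=M_k-\mu_k\ge 0$; recall that, through the chain of identities in the proof of \Propref{prop:variational-formula-in-symmetric-situation}, the bounds of \Corref{cor:inf-sup-inequality-for-limiting-Hamiltonian} say $\essinf_\w\mathcal{H}_{sym}(f,\w)\le\overline{H}(p)\le\esssup_\w\mathcal{H}_{sym}(f,\w)$ for every $f\in F$, so that a corrector — an $f\in F$ with $\mathcal{H}_{sym}(f,\cdot)$ a.s.\ constant — is automatically a minimizer. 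Two preliminary facts are checked first, by induction on $k$: (i) each $f_k$ lies in $F$ — mean zero is built into the choice of $\xi$, and the constraint follows from $\sup_{\alpha}|f_k+p\cdot\alpha|\le b\,\mathcal{H}_{sym}(f_k,\cdot)\le bM_k\le bM_0\le(b/a)|p|_{\infty}$, starting the algorithm at $f_0=0$ (which gives $M_0\le(1/a)|p|_{\infty}$); (ii) $M_k$ is non–increasing — on $MIN_k$ and on the ``elsewhere'' set $f$ is unchanged, on $S_+\cup S_-$ the update moves $f$ toward the pointwise minimizer of $\mathcal{H}_{sym}(\cdot,\w)$ so the value can only drop, and on $I$ the compensating move is small enough — by the choice of $\xi$ and the bound $1/a$ on the Lipschitz constant of $\mathcal{H}_{sym}(\cdot,\w)$ — that the value stays below $\max(\mathcal{H}_{sym}(f_k,\w),\mu_k)\le M_k$. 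Hence $M_k\downarrow M_\infty$ with $M_\infty\ge\overline{H}(p)$.

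If the algorithm stops at step~1 (so $d=0$ for the terminal iterate $f$), then $\mathcal{H}_{sym}(f,\cdot)\le\esssup_\w\mathcal{H}_{sym}(f,\cdot)=E[\mathcal{H}_{sym}(f,\cdot)]$ a.s.\ with equal means, hence $\mathcal{H}_{sym}(f,\cdot)$ is a.s.\ constant; $f$ is a corrector, hence a minimizer. If it stops at step~2 with $d>0$, the test that was met is $\esssup_{\w\in MIN_0}\mathcal{H}_{sym}(f,\w)=M$, with $M:=\esssup_\w\mathcal{H}_{sym}(f,\w)$ and $\Prob(MIN_0)>0$. For \emph{any} measurable $g\colon\W\to\R$ and $\w\in MIN_0$ one has $\mathcal{H}_{sym}(g(\w),\w)\ge\min_x\mathcal{H}_{sym}(x,\w)=\mathcal{H}_{sym}(f(\w),\w)$, so $\esssup_\w\mathcal{H}_{sym}(g,\w)\ge\esssup_{\w\in MIN_0}\mathcal{H}_{sym}(f,\w)=M$; taking the infimum over $g\in F$ and using $f\in F$ gives $\overline{H}(p)=M$, so $f$ is a minimizer, while $d>0$ forbids $\mathcal{H}_{sym}(f,\cdot)$ from being a.s.\ constant, so $f$ is not a corrector.

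Suppose finally the algorithm never stops. Then $d_k>0$ and $\esssup_{\w\in MIN_k}\mathcal{H}_{sym}(f_k,\w)<M_k$ for all $k$, and the first claim is $d_k\to 0$. The idea is that, because the essential supremum is never ``trapped'' on the pointwise–argmin set $MIN_k$, on the part of $S$ where $\mathcal{H}_{sym}(f_k,\cdot)$ is close to $M_k$ the update genuinely pulls the value down toward $\mu_k$, by an amount controlled from below through the bounds $1/b\le|\text{slope of }\mathcal{H}_{sym}(\cdot,\w)|\le 1/a$ and the size of the cap $a(\mathcal{H}_{sym}(f_k,\cdot)-\mu_k)$; together with $M_k\downarrow M_\infty$ this forces the spread $d_k$ of the law of $\mathcal{H}_{sym}(f_k,\cdot)$ to vanish. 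Granting $d_k\to 0$: since $\mathcal{H}_{sym}(f_k,\cdot)\le M_k\to M_\infty$ and $E[M_k-\mathcal{H}_{sym}(f_k,\cdot)]=d_k\to 0$, we get $\mathcal{H}_{sym}(f_k,\cdot)\to M_\infty$ in $L^1$. The $f_k$ are uniformly bounded (they are in $F$), and using that at each step $f_k(\w)$ moves monotonically along one branch of the V–shaped convex function $\mathcal{H}_{sym}(\cdot,\w)$ while $\mathcal{H}_{sym}(f_k(\w),\w)$ is squeezed between $\mu_k$ and $M_k$, one deduces that $f_k$ converges a.e.\ to a function $f^*$ with $\mathcal{H}_{sym}(f^*,\cdot)=M_\infty$ a.s.; the constraints defining $F$ pass to the limit, so $f^*\in F$. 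Then $f^*$ is a corrector — hence a minimizer, and $M_\infty=\overline{H}(p)$ — which is the content of ``produce a corrector in the limit''.

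The main obstacle is Case~3, specifically its two quantitative steps: converting ``the step~2 test never fires'' into a uniform lower bound on how much each iteration shaves off the top of the distribution of $\mathcal{H}_{sym}(f_k,\cdot)$ (this is where the constants $a,b$ and the precise cap in step~3 must be used with care), and then pinning down $\lim_k f_k$ — delicate because $\mathcal{H}_{sym}(\cdot,\w)$ is only piecewise–linear convex, so $L^1$–convergence of the \emph{values} $\mathcal{H}_{sym}(f_k,\cdot)$ does not by itself identify $\lim_k f_k$; one must follow the trajectory $f_k(\w)$ and check that it stays on, and converges along, a single branch of the ``V''. The remaining steps (the two termination cases, the invariant $f_k\in F$, and monotonicity of $M_k$) are routine once the update formula is unwound.
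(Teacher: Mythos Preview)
Your treatment of the two termination cases is correct and essentially identical to the paper's: $d=0$ forces $\mathcal{H}_{sym}(f,\cdot)$ to be a.s.\ constant, and the step~2 stopping test gives a pointwise lower bound on $\esssup_\w\mathcal{H}_{sym}(g,\w)$ for \emph{every} $g$, exactly as you wrote. Your sketch for $d_k\to 0$ in the non-terminating case is also on target; the paper makes this precise via a clean dichotomy (its Claim): either $M_{k+1}\le M_k-d_k\,a/b$, or the step~2 test fires at the next iteration. Since non-termination rules out the latter, $M_k$ drops by at least $d_k\,a/b$ each step, and $M_k\ge 0$ forces $d_k\to 0$.

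The genuine gap is your convergence argument for $f_k$. Your claim that ``$f_k(\w)$ moves monotonically along one branch of the V'' is not true in general: on the set $I$ the update is $\Delta f=a\xi(\mu_k-\mathcal{H}_{sym}(f_k,\w))$, and the sign of $\xi\in(-1,1)$ can flip from step to step; moreover a fixed $\w$ can migrate between $S$, $I$, and the level set as $\mu_k$ moves, so the trajectory $f_k(\w)$ need not be monotone. The paper sidesteps this entirely by observing that $E[\Delta f_k]=0$ is built into the choice of $\xi$, so $\{f_k\}$ is a bounded martingale with respect to $\mathcal{F}_k=\sigma(f_1,\ldots,f_k)$; the martingale convergence theorem then gives $f_k\to f_\infty$ a.s.\ and in every $L^p$, after which continuity of $\mathcal{H}_{sym}$ and $d_k\to 0$ finish the job. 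This is both shorter and avoids the branch-tracking you (rightly) flagged as delicate.
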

We need the following lemma to prove~\Thmref{thm:algorithm-convergence}.
\begin{lemma} \mbox\newline 
  The function $\mathcal{H}_{sym}(x,\w)$ has the following properties:
  \begin{enumerate}
    \item For each $\w$, it is convex in $x$.
    \item It has a unique measurable minimum $x^*(\w)$.
    \item Its left and right derivatives satisfy $D_{-} \mathcal{H}_{sym}(x,\w) \in [b^{-1},a^{-1}]$ or $D_+ \mathcal{H}_{sym} \in [-a^{-1},-b^{-1}]$ a.s. $\w$. 
  \end{enumerate}
  \label{lem:H-sym-is-convex-and-its-min-is-measurable}
\end{lemma}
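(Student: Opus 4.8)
The plan is to reduce everything to the single observation that, for fixed $\w$, $\mathcal{H}_{sym}(\cdot,\w)$ is a maximum of finitely many affine functions whose slopes all have modulus in $[b^{-1},a^{-1}]$. Writing $\tau_i(\w):=\tau(0,e_i,\w)$ and $p_i:=p\cdot e_i$, the definition~\eqref{eq:discrete-hamiltonian-2} is
\[
  \mathcal{H}_{sym}(x,\w)=\max_{1\le i\le d}\frac{|x+p_i|}{\tau_i(\w)}
  =\max_{1\le i\le d}\ \max\!\left(\frac{x+p_i}{\tau_i(\w)},\,-\frac{x+p_i}{\tau_i(\w)}\right),
\]
a pointwise maximum of the $2d$ affine maps $x\mapsto\pm(x+p_i)/\tau_i(\w)$, each with slope $\pm 1/\tau_i(\w)$ and $1/\tau_i(\w)\in[b^{-1},a^{-1}]$ a.s.\ by~\eqref{eq:basic-assumption-on-edge-weights}. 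Item~1 is then immediate, a maximum of affine (hence convex) functions being convex; and since $\mathcal{H}_{sym}(x,\w)\ge b^{-1}|x+p_1|\to\infty$ as $|x|\to\infty$ while $\mathcal{H}_{sym}(\cdot,\w)$ is continuous, a minimizer exists.

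For item~2, I would use that a maximum of finitely many affine functions is piecewise affine with finitely many pieces, the slope on each piece being the slope of one of the affine functions, hence nonzero here; so $\mathcal{H}_{sym}(\cdot,\w)$ is nowhere locally constant, and a convex function with this property has a unique minimizer $x^*(\w)$. For the measurability of $\w\mapsto x^*(\w)$ I would avoid invoking a general selection theorem and argue by hand: from $\mathcal{H}_{sym}(x^*(\w),\w)\le\mathcal{H}_{sym}(-p_1,\w)\le a^{-1}\max_i|p_i-p_1|$ and $\mathcal{H}_{sym}(x^*(\w),\w)\ge b^{-1}|x^*(\w)+p_1|$ one gets $x^*(\w)\in[-R,R]$ with $R$ depending only on $p,a,b$; then, choosing for each $n$ the smallest point $x_n^*(\w)$ of a dyadic grid $G_n\subset[-R,R]$ that minimizes $\mathcal{H}_{sym}(\cdot,\w)$ over $G_n$ gives measurable functions $x_n^*$, and $x_n^*(\w)\to x^*(\w)$ because $\mathcal{H}_{sym}(\cdot,\w)$ is $a^{-1}$-Lipschitz (so $\mathcal{H}_{sym}(x_n^*(\w),\w)\to\min_x\mathcal{H}_{sym}(x,\w)$) together with uniqueness of the minimizer and a compactness argument on $[-R,R]$. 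Hence $x^*$ is measurable.

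For item~3 I would work at a point $x\ne x^*(\w)$ and use the monotonicity of the one-sided derivatives of a convex function. Since $\mathcal{H}_{sym}(\cdot,\w)$ is piecewise affine with nonzero slopes and has its unique minimum at $x^*(\w)$, it is strictly increasing on $[x^*(\w),\infty)$ and strictly decreasing on $(-\infty,x^*(\w)]$. Thus if $x>x^*(\w)$ then $D_-\mathcal{H}_{sym}(x,\w)>0$, and since $D_-\mathcal{H}_{sym}(x,\w)$ is the slope of the affine piece active immediately to the left of $x$, it equals $1/\tau_i(\w)$ for some $i$, hence lies in $[b^{-1},a^{-1}]$; symmetrically, if $x<x^*(\w)$ then $D_+\mathcal{H}_{sym}(x,\w)=-1/\tau_i(\w)\in[-a^{-1},-b^{-1}]$. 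This is the asserted dichotomy away from the minimizer; at $x=x^*(\w)$ both one-sided derivatives have the ``wrong'' sign, which is precisely why the sets $S_+,S_-$ in Step~\ref{step:construct-delta-f} are taken inside $S\setminus MIN_0$ (on which $f_0(\w)\ne x^*(\w)$), so that $S\setminus MIN_0=S_+\sqcup S_-$.

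The argument is elementary throughout; the only mildly delicate point is the measurable selection of $x^*(\w)$, which the grid construction above handles, together with being careful to assert the derivative dichotomy only away from the minimizer. Everything else follows directly from the representation of $\mathcal{H}_{sym}(\cdot,\w)$ as a maximum of affine functions with slopes of modulus in $[b^{-1},a^{-1}]$.
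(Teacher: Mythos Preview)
Your proof is correct and follows the same core idea as the paper: write $\mathcal{H}_{sym}(\cdot,\w)$ as a maximum of the $2d$ affine maps $x\mapsto\pm(x+p_i)/\tau_i(\w)$, whose slopes all have modulus in $[b^{-1},a^{-1}]$, and read off convexity, uniqueness of the minimizer (no flat pieces), and the bounds on the one-sided derivatives from that representation.

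The one substantive difference is the measurability argument for $x^*(\w)$. The paper observes that the minimum can only occur at one of finitely many candidate points---either some $-p_i$ or an intersection of two affine pieces---each of which is an explicit measurable function of the $\tau_i(\w)$; one then selects the candidate with the smallest value of $\mathcal{H}_{sym}$. Your dyadic-grid approximation is a perfectly valid alternative and arguably more robust, since it doesn't rely on the piecewise-affine structure. Either route is short.

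You also correctly flag that the dichotomy in item~3 fails precisely at $x=x^*(\w)$, where $D_-\le 0\le D_+$. The paper's proof glosses over this (it says ``for all $t$''), but as you note, the lemma is only invoked in the algorithm on the set $S\setminus MIN_0$, where $f_0(\w)\ne x^*(\w)$, so the application is unaffected.
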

We will prove~\Lemref{lem:H-sym-is-convex-and-its-min-is-measurable} after proving~\Thmref{thm:algorithm-convergence}.
  \begin{figure}
    \centering
    \includegraphics[height=0.4\textheight]{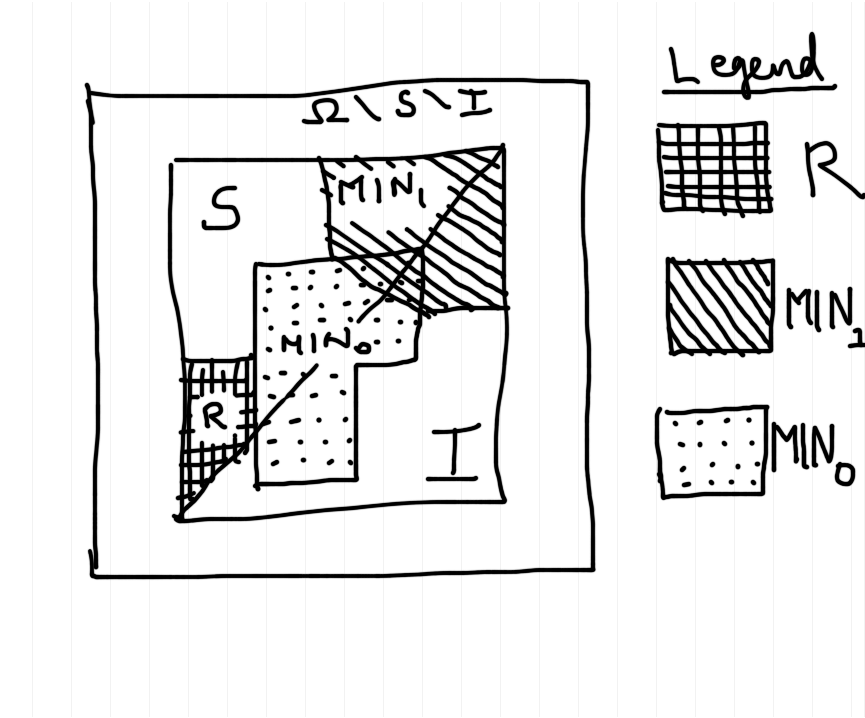}
    \caption{Sketch of sets in algorithm. The outermost square represents the probability space $\W$. $S$ is the upper triangle in the inner square, and $I$ is the lower triangle in the inner square. $\W \setminus S \setminus I$ is the annular region between the two squares.}
    \label{fig:sets-in-algorithm}
  \end{figure}

\begin{proof}[Proof of~\Thmref{thm:algorithm-convergence}]
\mbox{}\newline 
\noindent $1$. In the first step, we compute $d$, the distance between the mean and supremum of $\mathcal{H}_{sym}(f,\w)$. If $d = 0$, $f$ must be a corrector and from~\Corref{cor:inf-sup-inequality-for-limiting-Hamiltonian}, it must be a minimizer. Therefore, we stop the algorithm.

\noindent $2$. $MIN_0$ is the set on which $\mathcal{H}(f_0,\w)$ cannot be lowered further. $S$ and $I$ are the sets on which $\mathcal{H}_{sym}(f_0,\w)$ is bigger and lower than its mean $\mu_0$. $f$ will be modified on these two sets in step 3. 

Lemma~\ref{lem:uniform-in-n-delta-estimate} says that $\mathcal{H}_{sym}(\cdot,\w)$ is convex and has a minimum. So there is the possibility of the algorithm getting ``stuck'' at a minimum of $\mathcal{H}_{sym}$. That is, $f_0$ might be such that $\mathcal{H}_{sym}(f_0,\w) = \mathcal{H}_{sym}(x^*(\w),\w)$ on a set of positive measure, and also  
    \[ 
      \esssup_{\w \in MIN_0} \mathcal{H}_{sym}(f_0,\w) = \esssup_{\w \in \W} \mathcal{H}_{sym}(f_0,\W). 
    \]
    For any other $g \in F$, we clearly have $\mathcal{H}_{sym}(g(\w),\w) \geq \mathcal{H}_{sym}(f_0(\w),\w)$ on $MIN_0$. Hence $f_0$ must be a minimizer, and we stop the algorithm.

  \noindent $3$. $\Delta f$ is first defined on the sets $S_+$ and $S_-$ so that the supremum falls. Then, $\Delta f$ is defined on $I$ so that it satisfies 
    \[
        E[\Delta f] = 0.
    \]
    We need to make sure that $\xi$ is not infinite. Notice that $E[(\mu_0 - \mathcal{H}_{sym}(f_0,\w) ), I] > 0$; for if not, $\essinf_{\w \in I} \mathcal{H}_{sym}(\w) = \mu_0$. Hence $\Delta f$ is well-defined on $I$. We derive a useful estimate on $\xi$ next. Since
  \begin{equation*}
    \left|E[ \left( \mathcal{H}_{sym}(f_0,\w) - \mu_0 \right) , S \setminus MIN_0 ] \right| \leq 
    E[ \left( \mu_0 - \mathcal{H}_{sym}(f_0,\w) \right) , I ] ,
  \end{equation*}
  we have 
  \[
    \begin{split}
      E[\Delta f, S_+ \cup S_-]  
      & \leq a E[(\mathcal{H}_{sym}(f_0,\w) - \mu_0),S_+ \cup S_-],  \\
      & \leq a E[\mu_0 - \mathcal{H}_{sym}(f_0,\w), I] .
    \end{split}
  \]
  Therefore,
  \begin{equation}
   -1 < \xi  < 1  .
    \label{eq:algorithm-xi-is-bounded}
  \end{equation}
  Finally, we prove that if the algorithm does not terminate in either step~\ref{step:find-distance-between-sup-and-mean} or~\ref{step:define-min-set-S-and-I}, we produce a corrector in the limit. We claim that if at the end of step $3$ of the algorithm, $\esssup \mathcal{H}(f_1)$ does not fall enough, the algorithm will terminate at the next step. 
  \begin{claim}
    If  
    \begin{equation}
      \esssup_{\w \in \W} \mathcal{H}_{sym}(f_1,\w) > \esssup_{\w \in \W} \mathcal{H}_{sym}(f_0,\w) - \frac{da}{b}, 
      \label{eq:suppose-that-H-f1-does-not-fall-enough}
    \end{equation}
  the algorithm will terminate when it goes to step $2$ in the following iteration. That is,
  \[
    \esssup_{\w \in \W} \mathcal{H}_{sym}(f_1,\w) = \esssup_{MIN_1}\mathcal{H}_{sym}(f_1,\w),
  \]
  where $MIN_1$ is defined in~\Eqref{eq:definition-of-the-sets-MIN-S-I} with $f_0$ replaced by $f_1$.
  \label{claim:intermediate-termination-possibility-if-sup-does-not-fall-enough}
  \end{claim}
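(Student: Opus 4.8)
Write $M_0:=\esssup_{\w}\mathcal{H}_{sym}(f_0,\w)$, so that $d=M_0-\mu_0$, and let $x^*(\w)$ be the (a.s.\ unique) minimizer of $x\mapsto\mathcal{H}_{sym}(x,\w)$ supplied by~\Lemref{lem:H-sym-is-convex-and-its-min-is-measurable}. The plan is to prove the set inclusion (modulo null sets)
\[
  \bigl\{\w:\mathcal{H}_{sym}(f_1,\w)>M_0-da/b\bigr\}\ \subset\ MIN_1 ,
\]
and then conclude from the hypothesis~\eqref{eq:suppose-that-H-f1-does-not-fall-enough} that the essential supremum of $\mathcal{H}_{sym}(f_1,\cdot)$ is attained on $MIN_1$, which is exactly the stopping test performed in step~\ref{step:define-min-set-S-and-I} on the next iteration.

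The only analytic ingredient is~\Lemref{lem:H-sym-is-convex-and-its-min-is-measurable}(3): since the one-sided derivatives of $\mathcal{H}_{sym}(\cdot,\w)$ have magnitude in $[b^{-1},a^{-1}]$ away from $x^*(\w)$, (i) $\mathcal{H}_{sym}(\cdot,\w)$ is $a^{-1}$-Lipschitz, and (ii) displacing the first argument from a point $y$ toward $x^*(\w)$ by an amount $\theta\le|x^*(\w)-y|$ decreases $\mathcal{H}_{sym}$ by at least $b^{-1}\theta$ (and by at most $a^{-1}\theta$, so one never overshoots past the minimum value). I would then bound $\mathcal{H}_{sym}(f_1,\cdot)$ on each of the four regions where $\Delta f$ is defined in step~\ref{step:construct-delta-f}. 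On $\W\setminus S\setminus I$ one has $\Delta f\equiv 0$, so $\mathcal{H}_{sym}(f_1,\cdot)=\mu_0=M_0-d\le M_0-da/b$. On $I$, $|\Delta f(\w)|=a|\xi|(\mu_0-\mathcal{H}_{sym}(f_0,\w))$ with $|\xi|<1$ by~\eqref{eq:algorithm-xi-is-bounded}, so by (i), $\mathcal{H}_{sym}(f_1,\w)\le(1-|\xi|)\mathcal{H}_{sym}(f_0,\w)+|\xi|\mu_0<\mu_0\le M_0-da/b$. On $S\cap MIN_0$, $f_0(\w)=x^*(\w)$ and $\Delta f(\w)=0$, hence $f_1(\w)=x^*(\w)$ and $\w\in MIN_1$. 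On $S_+\cup S_-=S\setminus MIN_0$, $\Delta f(\w)$ moves $f_0(\w)$ toward $x^*(\w)$ by the amount $\min\bigl(|\Delta f^*(\w)|,\,a(\mathcal{H}_{sym}(f_0,\w)-\mu_0)\bigr)$: if the first term realizes the minimum then $f_1(\w)=x^*(\w)$ so $\w\in MIN_1$; if the second term does, then (ii) gives $\mathcal{H}_{sym}(f_1,\w)\le\mathcal{H}_{sym}(f_0,\w)-(a/b)(\mathcal{H}_{sym}(f_0,\w)-\mu_0)\le(1-a/b)M_0+(a/b)\mu_0=M_0-da/b$. This establishes the displayed inclusion.

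To finish, set $M_1:=\esssup_\w\mathcal{H}_{sym}(f_1,\w)$; by~\eqref{eq:suppose-that-H-f1-does-not-fall-enough}, $M_1>M_0-da/b$. For every $\eps\in\bigl(0,\,M_1-(M_0-da/b)\bigr)$ the set $\{\mathcal{H}_{sym}(f_1,\cdot)>M_1-\eps\}$ has positive measure and, by the inclusion above, lies in $MIN_1$ up to a null set, so $\esssup_{\w\in MIN_1}\mathcal{H}_{sym}(f_1,\w)\ge M_1-\eps$; letting $\eps\downarrow0$ and using the trivial reverse inequality gives $\esssup_{\w\in MIN_1}\mathcal{H}_{sym}(f_1,\w)=M_1$, which is the asserted termination condition. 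The step I expect to be most delicate is the last region: one must read off carefully from the definition of $\Delta f$ in step~\ref{step:construct-delta-f} that on $S\setminus MIN_0$ the update moves $f_0(\w)$ monotonically toward $x^*(\w)$ with displacement of magnitude exactly $\min(|\Delta f^*(\w)|,\,a(\mathcal{H}_{sym}(f_0,\w)-\mu_0))$, and keep the left/right derivative bounds straight so that (ii) is applied on the correct side of the minimum; the facts that $f_1\in F$ and that $-1<\xi<1$ are inherited from the preceding parts of the proof of~\Thmref{thm:algorithm-convergence}.
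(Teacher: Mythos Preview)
Your proposal is correct and follows essentially the same route as the paper. The paper introduces an auxiliary set $R\subset S\setminus MIN_0$ on which the update does not hit the minimum and then bounds $\mathcal{H}_{sym}(f_1,\cdot)$ region by region (on $R$, on $I$, and on $\W\setminus S\setminus I$), noting $S\setminus R\subset MIN_1$ and $S\cap MIN_0\subset MIN_1$; your packaging as the set inclusion $\{\mathcal{H}_{sym}(f_1,\cdot)>M_0-da/b\}\subset MIN_1$ is the same argument, and your case split on $S\setminus MIN_0$ into ``hit the minimum'' vs.\ ``did not'' is exactly the paper's dichotomy $S\setminus R$ vs.\ $R$.
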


  Now suppose that the algorithm does not terminate, and let $f_n$ be the $n$\textsuperscript{th} iterate. Claim~\ref{claim:intermediate-termination-possibility-if-sup-does-not-fall-enough} gives us the estimate
  \[
    \esssup_{\w \in \W} \mathcal{H}_{sym}(f_n,\w) \leq \esssup_{\w \in \W} \mathcal{H}_{sym}(f_{n-1},\w) - d_n \frac{a}{b}.
  \]
  Since $\mathcal{H}_{sym} \geq 0$, we must have $d_n \to 0$. Since for all $n$, we have
  \[ 
    \esssup_{\w \in \W} \mathcal{H}_{sym}(f_n,\w) < \esssup_{\w \in \W} \mathcal{H}_{sym}(f_0,\w),
  \]
  the coercivity of $\mathcal{H}_{sym}$ implies that $f_n$ must be bounded uniformly in $n$. By our construction, $\{f_n\}$ is a bounded martingale with respect to the filtration $\mathcal{F}_n = \sigma(f_1,\ldots,f_{n})$. Hence by the martingale convergence theorem, $f_{\infty}(\w) = \lim_n f_n(\w)$ exists a.s., and further the convergence is uniform in every $L^p$ norm. Then, by the continuity of the $\mathcal{H}$, its nonnegativity, and its uniform boundedness on compact sets, we get for any $p \in \R$,
  \[ 
    \begin{split}
      0 = & \lim_n d_n \\
      = & \lim_{n \to \infty} \esssup \mathcal{H}_{sym}(f_n(\w),\w) - \int \mathcal{H}_{sym}(f_n(\w),\w),\\
      \geq & \Norm{\mathcal{H}_{sym}(f_{\infty},\w)}{p} - \int \mathcal{H}_{sym}(f_{\infty},\w).\\
    \end{split}
  \]
  Taking $p \to \infty$ proves that $f_{\infty}$ is a corrector. This completes the proof except for~\Claimref{claim:intermediate-termination-possibility-if-sup-does-not-fall-enough}. We prove this next.
\end{proof}

%
%
%
  

\begin{proof}[Proof of~\Claimref{claim:intermediate-termination-possibility-if-sup-does-not-fall-enough}]
  Let 
  \[
    R := \{ \w \in S \setminus MIN_0 : a~|\mathcal{H}_{sym}(f_0,\w) - \mu_0)| < \Delta f^*(\w) \},
  \]
  be the set on which we can modify $f_0$ without hitting the minimum of $\mathcal{H}(f_0,\w)$; i.e., $\mathcal{H}_{sym}(f_1,\w) > \mathcal{H}_{sym}(x^*(\w),\w)$. By the definition of $\Delta f$ and the bound on the derivatives of $\mathcal{H}(\cdot,\w)$ in~\Propref{lem:H-sym-is-convex-and-its-min-is-measurable}, we have 
  \begin{multline*}
    \mathcal{H}(f_0,\w)  - \frac{1}{a}a(\mathcal{H}_{sym}(f_0,\w) - \mu_0) \leq \mathcal{H}(f_1,\w) \\ \leq  \mathcal{H}(f_0,\w) - \frac{1}{b} a (\mathcal{H}_{sym}(f_0,\w) - \mu_0)  
    \qquad \w \in R ~\almostsurely.
  \end{multline*}
  Therefore,
  \begin{equation}
    \mu_0 \leq  \esssup_{\w \in R} \mathcal{H}(f_1,\w) \leq  \esssup_{\w \in R}  \mathcal{H}(f_0,\w) - \frac{da}{b}.
    \label{eq:bound-on-H-f1-on-set-R}
  \end{equation}
  Similarly for $\w \in I$, we use the bound on $\xi$ in~\Eqref{eq:algorithm-xi-is-bounded} to get
  \begin{equation}
    \mathcal{H}_{sym}(f_1,\w) \leq \mathcal{H}_{sym}(f_0,\w) + \xi (\mu_0 - \mathcal{H}_{sym}(f_0,\w)) \leq \mu_0 \quad \w \in I ~\almostsurely .
    \label{eq:bound-on-H-f1-on-set-I}
  \end{equation}
  From the definition of $R$, it follows that $S \setminus R \subset MIN_1$. Since $\Delta f = 0$ on $S \cap MIN_0$, we must have $S \cap MIN_0 \subset MIN_1$. 

  Consider the condition in~\Eqref{eq:suppose-that-H-f1-does-not-fall-enough} again. Equations~\eqref{eq:bound-on-H-f1-on-set-R} and~\eqref{eq:bound-on-H-f1-on-set-I} imply that we can ignore the sets $R$ and $I$ when taking a sup over $\W$. It's clear that we can ignore $\W \setminus S \setminus I$ too, since $\mathcal{H}(f_0,\w) = \mu_0$ on this set. Summarizing, we get 
  \[
    \begin{split}
      \esssup_{\W} \mathcal{H}_{sym}(f_1,\w) & = \esssup_{S \setminus R ~\cup~ MIN_0 \cap S} \mathcal{H}_{sym}(f_1,\w) = \esssup_{MIN_1} \mathcal{H}(f_1,\w). 
    \end{split}
  \] 
  Hence, the algorithm terminates at step $2$ in the next iteration.
\end{proof}

%
To finish, we complete the proof of~\Lemref{lem:H-sym-is-convex-and-its-min-is-measurable}. 
\begin{proof}[Proof of~\Lemref{lem:H-sym-is-convex-and-its-min-is-measurable}]
  Since
  \begin{equation*}
    \mathcal{H}_{sym}(t,p,\w) = \sup_{\alpha \in A^+} \frac{|t + p\cdot\alpha|}{\tau(0,\alpha,\w)},
  \end{equation*}
  Clearly $\mathcal{H}_{sym}$ is convex. Its minimum is unique since $\tau(0,\alpha,\w) \geq a$, and therefore, it cannot have a ``flat spot'' parallel to the $t$-axis. 

  Now, $\mathcal{H}_{sym}$ can only take its minimum at a minimum of $|t + p\cdot\alpha|/\tau(0,\alpha,\w)$ or when $t$ is such that $|t + p\cdot\alpha|/\tau(0,\alpha_1,\w) = |t + p\cdot\alpha|/\tau(0,\alpha_2,\w)$ for any $\alpha_1,\alpha_2 \in A_+$. There are only a finite number of such possibilities, we can compute all of them, and hence its easy to see that $x^*(\w)$ is measurable. 

  The fact that $D_{-} \mathcal{H}_{sym}(t,\w) \in [b^{-1},a^{-1}]$ or $D_+ \mathcal{H}_{sym}(t,\w) \in [-a^{-1},-b^{-1}]$ for all $t$, follows easily from the form of $\mathcal{H}_{sym}$.
\end{proof}

Suppose the vector $\vec{t}(\w) = (t(e_1,\w), \ldots, t(e_d,\w))$ takes at most a finite number of different values $\{\vec{t}_0,\ldots,\vec{t}_{n-1}\} =: \W_0$. Let our probability space be $\W = (\R^d)^{\Z}$, let $\tau(z,\cdot,\w) = \w_{z_1}$ ($z_1$ is the first coordinate of $z$), and let the marginal of $\Prob$ on any coordinate of $\W$ be supported on $\W_0$. 

We show that even if $\Prob$ is a product measure, under the symmetry assumption, the structure of the problem is nearly equivalent to a periodic medium. Define the sets
\[ 
  A_i := \{ \w \in \W : \tau(0,\cdot,\w) = \vec{t}_i \}, \quad i=1,\ldots,n-1. 
\]
 The set $F$ of functions in~\Eqref{eq:set-S-under-symmetry-assumption} can be restricted to
 \begin{equation}
   F := \left\{ f(\w) : f(\w) = \sum_{i=0}^{n-1} f_i 1_{A_i}(\w),~f_i \in \R,~E[f] = 0 \right\},
   \label{eq:set-of-functions-F-in-variational-formula}
 \end{equation}
 and the algorithm continues to produce a minimizer. Now suppose we have a periodic medium with equal periods in both directions; i.e., the translations satisfy
 \[
    \tau(0,\cdot,V^n \w) = \tau(0,\cdot,V \w) ~\almostsurely.
 \]
 in addition to~\Eqref{eq:symmetry-assumption-on-medium}. Periodicity only forces the additional constraint $\Prob(A_i) = 1/n$, and except for this, the problem is nearly unchanged. Periodic homogenization has been well-studied and there are many algorithms to produce the effective Hamiltonian; see for example,~\citet{gomes_computing_2004} or~\citet{oberman_homogenization_2009}. 
 
 Our algorithm works even if $\tau(0,\cdot,\w)$ takes an uncountable number of values; i.e., the period is infinite. Notice that what we have here is an $n$-dimensional deterministic convex minimization problem with linear constraints (see~\Propref{prop:variational-formula-in-symmetric-situation} and~\Eqref{eq:set-of-functions-F-in-variational-formula}). It's worth stating (without proof, of course) that our algorithm is computationally much faster than conjugate gradient and other standard constrained optimization methods.

 \begin{remark}
   The symmetry assumption is a massive simplification, and removing this is a real challenge. If the translations $V_i$ are rationally related, we ought to be able to generalize the algorithm with a little work. However, taking this route ---solving the loop/cocycle condition--- in general is probably hopeless. It appears that working on an instance $\w \in \W$ of the probability space would be the most convenient way to proceed, since we can work directly with a function $f\colon\Z^d \to \R$ (instead of its derivative) and forget the cocycle condition.
 \end{remark}


\chapter{Comparing two distributions}
\label{chap:comparing-two-distributions}
\section{A simple coupling based argument}
For $i=1,2$, let $(\Omega_i,\mathcal{F}_i,\Prob_i)$ be probability spaces, and let $\tau_i\colon\Z^d \times A \times \W \to \R$ be two edge-weight functions. Assume that
\begin{equation*}
  \begin{split}
    0 <~& a_i = \essinf_{x,\alpha,\w} \tau_i(x,\alpha,\w), \\
    & b_i = \esssup_{x,\alpha,\w} \tau_i(x,\alpha,\w) < \infty. 
  \end{split}
\end{equation*}

We wish to compare $m_1(x)$ and $m_2(x)$, the corresponding time-constants. There is an elementary argument to obtain a very basic estimate between the two time-constants\footnote{told to me by M. Damron}. We will reproduce it using the variational formula to highlight the duality in the problem.

It will be easier to compare the two \FPP~problems if they're both on the space $(\R^{d})^{\Z^d}$, and we first show that we can always assume this. Consider the map $M\colon\W \to (\R^{d})^{\Z^d}$ defined as
\begin{equation}
  M(\w) = (\tau(V_z \w))_{z \in \Z^d}.
  \label{eq:map-M-of-omega-into-R-infinity}
\end{equation}
Let 
\begin{equation}
  \mathcal{C} = \sigma(M) \subset \mathcal{F}
  \label{eq:sigma-algebra-C-generated-by-M}
\end{equation}
be the sigma-algebra generated by $M$. We next show that it's enough to consider functions $f \in S$ that are measurable with respect to $\mathcal{C}$.
\begin{prop}
  Assume that every $f \in S$ (defined in~\Corref{cor:discrete-set-S-for-variational-formula-with-lipschitz-constraint}) also satisfies the additional condition that $f(x,\w)$ is $\mathcal{C}$ measurable for each $x \in \Z^d$. Then, the variational formula in~\Eqref{eq:variational-formula-at-origin} is unchanged.
  \label{prop:functions-measurable-wrt-sigma-tau}
\end{prop}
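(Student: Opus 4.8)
The plan is to project onto the sub-$\sigma$-algebra $\mathcal{C}$ by conditional expectation \emph{at the level of gradients}, and to check that this projection neither leaves the feasible set $S$ of~\Eqref{eq:discrete-set-S-for-variational-formula-with-lipschitz-constraint} nor increases the objective. Two preliminary observations make this go through. First, $\tau(x,\alpha,\cdot)$ is itself $\mathcal{C}$-measurable: by stationarity $\tau(x,\alpha,\w)=\tau(0,\alpha,V_x\w)$ is a coordinate of $M(\w)$, and the same is trivially true of the constants $p\cdot\alpha$. Second, $\mathcal{C}$ is invariant under each translation $V_z$, because $M\circ V_z=S_z\circ M$ where $S_z$ is the (bi-measurable, bijective) coordinate shift on $(\R^{2d})^{\Z^d}$, so $\sigma(M\circ V_z)=\sigma(M)=\mathcal{C}$; consequently conditional expectation commutes with $V_z$.

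Now fix $\phi\in S$. The two-sided bound in~\Eqref{eq:discrete-set-S-for-variational-formula-with-lipschitz-constraint} makes $\mathcal{D}\phi(x,\alpha,\w)$ bounded, hence integrable, so $\psi(x,\alpha,\w):=E\big[\,\mathcal{D}\phi(x,\alpha,\cdot)\mid\mathcal{C}\,\big](\w)$ is well-defined and bounded; since $\mathcal{D}\phi$ sums to zero around every discrete loop and there are only countably many loops, $\psi$ does so almost surely, and hence $\psi=\mathcal{D}\tilde\phi$ for some $\mathcal{C}$-measurable $\tilde\phi\colon\Z^d\times\W\to\R$ (normalised by $\tilde\phi(0,\cdot)=0$). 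Linearity of $E[\cdot\mid\mathcal{C}]$ and its commutation with $V_z$ give that $\mathcal{D}\tilde\phi$ is stationary, and $E[\mathcal{D}\tilde\phi(x,\alpha)]=E[\mathcal{D}\phi(x,\alpha)]=0$; conditioning the almost sure inequality $-\mathcal{D}\phi(x,\alpha,\w)-p\cdot\alpha\le (b/a)|p|_\infty$ (whose right side and $p\cdot\alpha$ are $\mathcal{C}$-measurable) preserves the upper Lipschitz bound, so $\tilde\phi\in S$. For the objective, apply the conditional Jensen inequality to the convex function $v\mapsto\sup_{\alpha\in A}\big(v_\alpha-p\cdot\alpha\big)/\tau(x,\alpha,\w)$, whose parameters are $\mathcal{C}$-measurable:
\[
  \mathcal{H}(\tilde\phi,p,x,\w)=\sup_{\alpha\in A}\frac{-\psi(x,\alpha,\w)-p\cdot\alpha}{\tau(x,\alpha,\w)}\le E\big[\,\mathcal{H}(\phi,p,x,\cdot)\mid\mathcal{C}\,\big](\w)\quad\almostsurely.
\]
By ergodicity $\sup_x\mathcal{H}(\phi,p,x,\w)$ equals a constant $C:=\esssup_{\w}\sup_x\mathcal{H}(\phi,p,x,\w)$ almost surely, so $\mathcal{H}(\phi,p,x,\cdot)\le C$ a.s.\ for each fixed $x$, hence $\mathcal{H}(\tilde\phi,p,x,\cdot)\le C$ a.s., and taking the (countable) supremum over $x$, $\esssup_{\w}\sup_x\mathcal{H}(\tilde\phi,p,x,\w)\le C$. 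Since $\tilde\phi\in S$ is $\mathcal{C}$-measurable and attains a value $\le\esssup_{\w}\sup_x\mathcal{H}(\phi,p,x,\w)$, while every $\mathcal{C}$-measurable competitor in $S$ is bounded below by $\overline{H}(p)$ via~\Corref{cor:inf-sup-inequality-for-limiting-Hamiltonian}, taking the infimum over $\phi\in S$ gives that~\Eqref{eq:variational-formula-at-origin} is unchanged under the measurability restriction.

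The one delicate point is that the \emph{lower} bound $\sup_{\alpha}(-\mathcal{D}\phi(x,\alpha)-p\cdot\alpha)\ge 0$ in~\Eqref{eq:discrete-set-S-for-variational-formula-with-lipschitz-constraint} need not survive conditioning, since $E[\sup_\alpha\,\cdot\mid\mathcal{C}]$ only dominates, rather than equals, $\sup_\alpha E[\,\cdot\mid\mathcal{C}]$. I would dispose of this in one of two ways. Either note that this lower bound is never invoked in the proof of~\Thmref{thm:discrete-variational-formula-for-H-bar} or~\Corref{cor:inf-sup-inequality-for-limiting-Hamiltonian} — only the upper bound, which governs coercivity and boundedness — so the argument above is already complete; or, more cleanly, observe that the corrector $\phi$ \emph{explicitly constructed} in the proof of~\Thmref{thm:discrete-variational-formula-for-H-bar} is \emph{already} $\mathcal{C}$-measurable, being built from weak $L^2$-limits of $\mathcal{D}\hat\nu_{\e}(0,\alpha,\w)$, and $\nu_{\e}(x,\w)$ is a deterministic measurable function of the edge-weights $\big(\tau(z,\beta,\w)\big)_{z,\beta}=M(\w)$ (it is an infimum over controls of sums of $\tau$'s); that minimiser lies in the $\mathcal{C}$-measurable part of $S$ and attains $\overline{H}(p)$, giving the non-trivial inequality with no projection at all. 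I expect the stationarity and loop bookkeeping for $\psi$ and the commutation $E[\cdot\mid\mathcal{C}]\circ V_z=V_z\circ E[\cdot\mid\mathcal{C}]$ to be the only fiddly points; the one substantive fact is simply that $M$, and hence everything built from the edge-weights — the $\tau$'s, the cell-problem value functions $\nu_{\e}$, and the constructed corrector — is $\mathcal{C}$-measurable.
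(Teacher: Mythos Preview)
Your approach is essentially the same as the paper's: project the gradient onto $\mathcal{C}$ by conditional expectation and use conditional Jensen's inequality for the convex map $v\mapsto\sup_{\alpha}(v_\alpha-p\cdot\alpha)/\tau(x,\alpha,\w)$, whose parameters are $\mathcal{C}$-measurable, to show the objective does not increase. You are in fact more careful than the paper in verifying that the projected function remains in $S$ (stationarity, loop condition, the Lipschitz bounds) and in flagging the lower-bound constraint, which the paper's proof simply does not address.
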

The proof is a simple consequence of convexity and can be found in~\Secref{sec:some-proofs-from-comparing-two-distros}.

With~\Propref{prop:functions-measurable-wrt-sigma-tau}, it's easy to show that pushing the problem forward to the space $(\R^{2d})^{\Z^d}$ does not change the first-passage percolation problem. Let $\textrm{Im}(M) \subset (\R^{2d})^{\Z^d}$ be the image of $\W$ under the map $M$. Let $\Prob_M$ be the push-forward measure of $\Prob$ under $M$. It is enough to show that for each $f \in S$, there is a $g\colon(\R^{2d})^{\Z^d} \to \R$ such that
\[
  g(M(\w)) = f(\w) \quad \almostsurely .
\]
By~\Propref{prop:functions-measurable-wrt-sigma-tau}, we can assume that $f$ is $\sigma(M)$ measurable and hence by an elementary measurability lemma (see, for example~\citet{williams_probability_1991}) there is a function $g$ as required above. 

Therefore, we will henceforth assume that $\W = (\R^{d})^{\Z^d}$, $\mathcal{F}$ is the infinite product $\sigma$-algebra, $\Prob_1 \AND \Prob_2$ are the probability measures, the group of translations are just shift maps, and 
\[
  t_i(\w) := (\tau_i(0,\alpha,\w_i))_{\alpha \in A},
\] 
is the first coordinate of $\w$.

A coupling of two measures $\Prob_1$ and $\Prob_2$ is a probability measure on the product space $\W \times \W$ with product sigma-algebra $\mathcal{F} \times \mathcal{F}$ and $\Prob_1 \AND \Prob_2$ as marginals. Let $\Pi(\W \times \W)$ be the space of all couplings on $\W \times \W$. 
\begin{define}[a type of $W^{\infty}$ Wasserstein distance]
  \[
    d(\Prob_1,\Prob_2) = \inf_{\pi \in \Pi(\W \times \W)} \esssup_{(\w_1,\w_2) \in \W \times \W} \sup_{\alpha \in A} |t(\w_1) - t(\w_2)|.
  \]
  \label{define:l-infinity-coupling-distance}
\end{define}


The primal version of the comparison result is easily proved:
\begin{prop}
  For all $x \in \R^d$,
  \begin{equation*}
    | m_1(x) - m_2(x) | \leq \max\left( \frac{b_1}{a_1}, \frac{b_2}{a_2} \right) d(\Prob_1,\Prob_2) |x|_1 .
  \end{equation*}
  \label{prop:distance-estimate-between-time-constants-primal-argument}
\end{prop}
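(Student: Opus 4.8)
The plan is to run the elementary coupling argument in the reduced setting just established, where both media live on $\W=(\R^d)^{\Z^d}$ with the shift maps as translation group: I take a near-optimal path for the first \FPP~problem in a coupled realization, read it off in the second medium, and control the discrepancy edge by edge. The only inputs with any content are (i) that a near-optimal path uses few edges and (ii) that the $L^\infty$-Wasserstein bound on the weights, imposed at the single coordinate $0$ in \Defref{define:l-infinity-coupling-distance}, propagates to every site the path visits; (ii) is arranged by taking the coupling invariant under the diagonal shift. Writing the same bookkeeping in terms of correctors and the variational formula (\Thmref{thm:discrete-variational-formula-for-H-bar}) then gives the ``dual'' version of the estimate, which we record afterwards.

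First I would fix $\eps>0$ and a coupling $\pi\in\Pi(\W\times\W)$ with $\esssup_{(\w_1,\w_2)}\sup_{\alpha\in A}|t_1(\w_1)-t_2(\w_2)|\le d(\Prob_1,\Prob_2)+\eps$, and, by a routine symmetrization (Ces\`aro-average the push-forwards $(V_z\times V_z)_*\pi$ over $z$ and pass to a weak limit; since the $t_i$ are coordinate projections this does not increase the essential supremum), assume $\pi$ is invariant under the diagonal action of $\Z^d$. Then, $\pi$-almost surely,
\[
  |\tau_1(z,\alpha,\w_1)-\tau_2(z,\alpha,\w_2)|\le d(\Prob_1,\Prob_2)+\eps \qquad \forall\, z\in\Z^d,\ \alpha\in A,
\]
because the violation set at each individual site is $\pi$-null and there are only countably many sites.

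Next, fix $x\in\R^d$ and $n\in\N$. The coordinate ``staircase'' path from $0$ to $[nx]$ has $|[nx]|_1$ edges, so $\mathcal{T}_1(0,[nx],\w_1)\le b_1|[nx]|_1$; hence any path $\gamma$ from $0$ to $[nx]$ with $\mathcal{W}_1(\gamma,\w_1)\le \mathcal{T}_1(0,[nx],\w_1)+1$ has at most $(b_1|[nx]|_1+1)/a_1$ edges, since each edge weighs at least $a_1$. Evaluating such a near-optimal $\gamma$ in the coupled environment $\w_2$ and telescoping over its edges,
\[
  \mathcal{T}_2(0,[nx],\w_2)\le \mathcal{W}_2(\gamma,\w_2)=\mathcal{W}_1(\gamma,\w_1)+\sum_{(z,z+\alpha)\in\gamma}\!\big(\tau_2(z,\alpha,\w_2)-\tau_1(z,\alpha,\w_1)\big)\le \mathcal{T}_1(0,[nx],\w_1)+1+\frac{b_1|[nx]|_1+1}{a_1}\big(d(\Prob_1,\Prob_2)+\eps\big).
\]
Dividing by $n$ and letting $n\to\infty$ along the $\pi$-full-measure set on which the subadditive limits hold for both marginals, and using $|[nx]|_1/n\to|x|_1$, gives $m_2(x)\le m_1(x)+\tfrac{b_1}{a_1}|x|_1(d(\Prob_1,\Prob_2)+\eps)$; letting $\eps\to0$ and then swapping the roles of the two problems yields $m_1(x)\le m_2(x)+\tfrac{b_2}{a_2}|x|_1 d(\Prob_1,\Prob_2)$, and combining proves the claim.

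The only genuinely delicate step is the propagation of the coordinate-$0$ Wasserstein bound to all of the (random, and unbounded as $n\to\infty$) vertices visited by the near-optimal path: this is precisely what forces us to work with a shift-invariant coupling and to check that restricting to such couplings does not change the infimum defining $d(\Prob_1,\Prob_2)$. Everything else — the edge count, the telescoping, and the limit — is routine.
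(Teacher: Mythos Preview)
Your approach is essentially the paper's: fix a coupling, bound the edge-by-edge discrepancy along a path whose length is controlled by $\max(b_i/a_i)|[nx]|_1$, divide by $n$, and pass to the limit. The paper is terser --- it does not single out a near-optimal path and simply asserts the per-edge bound $|\tau_1(\gamma_i,\cdot,\w_1)-\tau_2(\gamma_i,\cdot,\w_2)|\le \esssup_{\pi}|t(\w_1)-t(\w_2)|_\infty$ for every edge of the path --- but the skeleton is identical.

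You are right to flag the propagation issue: the Wasserstein distance in \Defref{define:l-infinity-coupling-distance} only controls the site-$0$ discrepancy, so for a generic coupling the bound need not hold at the sites the path visits. However, your proposed fix does not work as written. Under the diagonal shift one has
\[
  \esssup_{(V_z\times V_z)_*\pi}\,\sup_{\alpha}|t_1(\w_1)-t_2(\w_2)|
  \;=\;\esssup_{\pi}\,\sup_{\alpha}|\tau_1(z,\alpha,\w_1)-\tau_2(z,\alpha,\w_2)|,
\]
which is the site-$z$ discrepancy of the original coupling and is \emph{not} controlled by its site-$0$ discrepancy; the Ces\`aro average therefore has $\esssup$ equal to $\max_{z\in B_N}D_z(\pi)$, which can strictly exceed $D_0(\pi)$. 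So ``$t_i$ are coordinate projections'' does not by itself prevent the essential supremum from increasing. The paper's proof simply does not engage with this point; you have been more scrupulous, but the symmetrization step as you describe it is the one place where your argument would actually fail. One clean repair is to take the infimum in the definition of $d(\Prob_1,\Prob_2)$ over shift-invariant couplings from the outset (or to verify in the cases of interest --- e.g.\ product measures, as in the next section of the paper --- that a near-optimal coupling can be chosen shift-invariant).
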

\begin{proof}
  Let $\gamma$ be a path connecting the origin to $[nx]$, let $\pi$ be a coupling, and let $d(\gamma)$ be the $l^1$ length of the path. Then,
  \[
    \sum_{i=1}^{d(\gamma)} | \tau_1(\gamma_i,\gamma_{i+1} - \gamma_i,\w_1) - \tau_2(\gamma_i,\gamma_{i+1} - \gamma_i,\w_2) | \leq d(\gamma) \esssup_{\w_1,\w_2} |t(\w_1) - t(\w_2)|_{\infty}.
  \]
  Since we can always take a shortest $l^1$ distance path between $0$ and $[nx]$, its enough to consider paths from $0$ to $[nx]$ that satisfy 
  \[
    d(\gamma) \leq \max\left( \frac{b_1}{a_1}, \frac{b_2}{a_2} \right) |[nx]|_1.
  \]
  Take an inf over all such paths to get
  \[
    |T_1([nx]) - T_2([nx])| \leq  \max\left( \frac{b_1}{a_1}, \frac{b_2}{a_2} \right) \esssup_{\w_1,\w_2} |t(\w_1) - t(\w_2)|_{\infty} |[nx]|_1
  \] 
  Divide by $n$ and take a limit as $n \to \infty$. Taking an infimum over couplings $\pi$, we get the result. 
\end{proof}

We can prove a similar version of~\Propref{prop:distance-estimate-between-time-constants-primal-argument} using just the variational formula.
\begin{prop}
  For all $x \in \R^d$,
  \begin{equation*}
    | m_1(x) - m_2(x) | \leq \max\left(\frac{b_1}{a_1},\frac{b_2}{a_2}\right) \frac{b_1 b_2}{a_1 a_2} d(\Prob_1,\Prob_2) |x|_1 
  \end{equation*}
  \label{prop:distance-estimate-between-time-constants-dual-argument}
\end{prop}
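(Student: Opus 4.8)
The plan is to route the whole estimate through the homogenized Hamiltonians and use duality. By \Corref{cor:H-is-the-dual-norm-of-time-constant}, $\overline{H}_i(p)$ is the dual norm of the time-constant $m_i$, so a \emph{multiplicative} comparison of the two Hamiltonians translates directly into the asserted comparison of $m_1$ and $m_2$. As in the proof of \Propref{prop:distance-estimate-between-time-constants-primal-argument}, I would first invoke the reduction preceding the statement, so that both $\Prob_1,\Prob_2$ live on $\W=(\R^d)^{\Z^d}$ with the shift action, $t_i(\omega)$ reads off the first coordinate, and by \Propref{prop:functions-measurable-wrt-sigma-tau} the admissible test functions in the variational formula may be taken measurable with respect to $\sigma(M)$. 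I would then take the competing coupling $\pi$ in \Defref{define:l-infinity-coupling-distance} to be stationary under the diagonal shift — this is already implicit in \Propref{prop:distance-estimate-between-time-constants-primal-argument}, where the essential supremum of $|t(\omega_1)-t(\omega_2)|$ is used to bound $|\tau_1(x,\alpha,\omega_1)-\tau_2(x,\alpha,\omega_2)|$ at \emph{every} site of a path, which requires exactly this invariance. The proof then has two steps: (a) show $\overline{H}_1(p)\le\overline{H}_2(p)+\tfrac{b_2}{a_1a_2^2}\,d(\Prob_1,\Prob_2)\,|p|_\infty$ for all $p$ (and its symmetric companion) from the variational formula \Eqref{eq:variational-formula-at-origin}; (b) dualize.

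For step (a), fix $p$ and $\eta>0$, and by \Corref{cor:discrete-set-S-for-variational-formula-with-lipschitz-constraint} choose $\phi$ in the constrained class $S$ for the medium $\tau_2$ with $\esssup_\omega\sup_x\mathcal{H}_2(\phi,p,x,\omega)\le\overline{H}_2(p)+\eta$ and $\sup_{\alpha\in A}|{-\mathcal{D}\phi(x,\alpha,\omega)-p\cdot\alpha}|\le\tfrac{b_2}{a_2}|p|_\infty$. On the coupled space I would transport $\phi$ to $\W$ through the conditional expectation $\psi:=E_\pi[\phi\mid\mathcal{F}_1]$ with respect to the first coordinate: then $\mathcal{D}\psi$ inherits mean zero (the marginals of $\pi$ are $\Prob_1,\Prob_2$) and stationarity (from stationarity of $\pi$), so $\psi$ is admissible for $\tau_1$. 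For fixed $x,\alpha$ and $\pi$-a.e.\ $(\omega_1,\omega_2)$, the two-sided bounds $\tau_1\ge a_1$, $\tau_2\ge a_2$, $|\tau_1-\tau_2|\le d(\Prob_1,\Prob_2)$ give
\[
\Bigl|\tfrac{-\mathcal{D}\phi(x,\alpha,\omega_2)-p\cdot\alpha}{\tau_1(x,\alpha,\omega_1)}-\tfrac{-\mathcal{D}\phi(x,\alpha,\omega_2)-p\cdot\alpha}{\tau_2(x,\alpha,\omega_2)}\Bigr|\le\tfrac{b_2}{a_2}|p|_\infty\cdot\tfrac{|\tau_1-\tau_2|}{\tau_1\tau_2}\le\tfrac{b_2}{a_1a_2^2}|p|_\infty\,d(\Prob_1,\Prob_2).
\]
Since $\tau_1(x,\alpha,\cdot)$ is $\mathcal{F}_1$-measurable, taking the conditional expectation over $\omega_2$ given $\omega_1$, then $\sup_\alpha$ (using $\sup_\alpha E_\pi[\cdot\mid\mathcal{F}_1]\le E_\pi[\sup_\alpha\cdot\mid\mathcal{F}_1]$), then $\sup_x$, and finally $\esssup$ over $\omega_1$, yields $\overline{H}_1(p)\le\esssup_{\omega_1}\sup_x\mathcal{H}_1(\psi,p,x,\omega_1)\le\overline{H}_2(p)+\eta+\tfrac{b_2}{a_1a_2^2}|p|_\infty\,d(\Prob_1,\Prob_2)$. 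Letting $\eta\to0$ and optimizing over stationary couplings gives the claimed inequality; interchanging the roles of $1$ and $2$ gives the companion one with $b_1,a_1,a_2$ in place of $b_2,a_2,a_1$.

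For step (b), combine $\overline{H}_1(p)\le\overline{H}_2(p)+\tfrac{b_2}{a_1a_2^2}d(\Prob_1,\Prob_2)|p|_\infty$ with $|p|_\infty\le b_2\overline{H}_2(p)$ (the lower bound from \Propref{prop:limiting-hamiltonian-is-a-norm}) to get $\overline{H}_1(p)\le\bigl(1+\tfrac{b_2^2}{a_1a_2^2}d(\Prob_1,\Prob_2)\bigr)\overline{H}_2(p)$. By \Corref{cor:H-is-the-dual-norm-of-time-constant}, $\overline{H}_i$ is the dual norm of $m_i$, so the inclusion of dual unit balls $\{\overline{H}_2\le1\}\subseteq\{\overline{H}_1\le1+\tfrac{b_2^2}{a_1a_2^2}d(\Prob_1,\Prob_2)\}$ dualizes (via positive homogeneity) to $m_2(x)\le\bigl(1+\tfrac{b_2^2}{a_1a_2^2}d(\Prob_1,\Prob_2)\bigr)m_1(x)$ for all $x$; hence $m_2(x)-m_1(x)\le\tfrac{b_2^2}{a_1a_2^2}d(\Prob_1,\Prob_2)\,m_1(x)\le\tfrac{b_1b_2}{a_1a_2}\cdot\tfrac{b_2}{a_2}\,d(\Prob_1,\Prob_2)|x|_1$, using $m_1(x)\le b_1|x|_1$ from \Propref{prop:lipschitz-continuity-of-time-constant}. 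The symmetric inequality produces the factor $b_1/a_1$ instead, and taking the larger of the two gives exactly the stated bound with $\max(b_1/a_1,b_2/a_2)$.

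The main obstacle is step (a): a near-minimizer for one medium is \emph{not} an admissible test function for the other, because the mean-zero constraint defining $S$ refers to a different probability measure, so one genuinely has to transport it through a coupling. The delicate points are verifying that the transported function $\psi=E_\pi[\phi\mid\mathcal{F}_1]$ still lies in $S$ — mean zero and, especially, stationarity, which is what forces the coupling to be stationary and the problem to be placed on $(\R^d)^{\Z^d}$ — and that its Hamiltonian degrades by at most $O(d(\Prob_1,\Prob_2))$; this last point is where the bounds $a_i\le\tau_i\le b_i$ enter through the factor $1/(\tau_1\tau_2)\le1/(a_1a_2)$ and account for the extra $\tfrac{b_1b_2}{a_1a_2}$ compared with the primal estimate in \Propref{prop:distance-estimate-between-time-constants-primal-argument}. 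Everything else — the passage from the additive to the multiplicative bound, the dualization, and the final bookkeeping — is routine given the uniform bounds.
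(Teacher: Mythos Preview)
Your proposal is correct and follows the paper's overall strategy --- establish an additive comparison of the homogenized Hamiltonians via the variational formula, then pass to the time-constants by duality --- but your implementation differs from the paper in two places, and in each case your version is arguably tidier.

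For step~(a), the paper proves the key lemma (\Lemref{lem:distance-estimate-between-Hamiltonians}) by fixing ``$f\in S$'' and bounding the difference of the two Hamiltonian expressions termwise under a coupling; it does not explicitly address the issue you identify, namely that the mean-zero constraint in $S$ depends on the underlying measure, so a near-minimizer for one medium is not automatically admissible for the other. Your device of transporting the test function by $\psi=E_\pi[\phi\mid\mathcal{F}_1]$ under a shift-invariant coupling is a clean way to make this rigorous; it exploits exactly the convexity/conditioning idea behind \Propref{prop:functions-measurable-wrt-sigma-tau}. You are also right that shift-invariance of the coupling is being used --- it is already implicit in the primal argument, since \Defref{define:l-infinity-coupling-distance} only controls the weight discrepancy at the origin.

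For step~(b), the paper takes a shorter route: instead of converting the additive Hamiltonian bound to a multiplicative one and dualizing via unit-ball inclusions, it uses the dual-norm representation $m_i(x)=\sup_{p}\,p\cdot x/\overline{H}_i(p)$ together with $|\sup f-\sup g|\le\sup|f-g|$ and computes $\bigl|p\cdot x/\overline{H}_1-p\cdot x/\overline{H}_2\bigr|\le |p\cdot x|\,|\overline{H}_1-\overline{H}_2|/(\overline{H}_1\overline{H}_2)$, then invokes the lower bound $\overline{H}_i(p)\ge|p|_\infty/b_i$ and H\"older. Both routes produce the same constant; the paper's is a one-line computation while yours goes through a harmless multiplicative detour.
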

With the following lemma, the proof of~\Propref{prop:distance-estimate-between-time-constants-dual-argument} is easy.
\begin{lemma}
  Let $H_1$ and $H_2$ be the corresponding limiting Hamiltonians. Then,
  \[ 
    |\overline{H}_1(p) - \overline{H}_2(p)| \leq \max\left(\frac{b_1}{a_1},\frac{b_2}{a_2}\right) |p|_{\infty}\frac{1}{a_1 a_2} d(\Prob_1,\Prob_2)
  \]
    \label{lem:distance-estimate-between-Hamiltonians}
\end{lemma}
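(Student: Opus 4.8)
The plan is to run the variational formula \Eqref{eq:variational-formula-at-origin} on a \emph{coupled} probability space, pushing a near-minimizer of the second homogenization problem onto it. By the reductions carried out just before the lemma I may assume $\W=(\R^d)^{\Z^d}$, that the group acts by shifts, that $\tau_i(0,\alpha,\cdot)$ records a coordinate of the first coordinate of $\w$, and that $\tau_i$ carries the law $\Prob_i$. First I would observe that the infimum in \Defref{define:l-infinity-coupling-distance} is unchanged if we restrict to couplings $\pi$ on $\W\times\W$ that are invariant and ergodic under the diagonal shift $V_z\times V_z$: averaging an arbitrary coupling over larger and larger boxes of shifts produces an invariant coupling without raising the (origin-only) $L^\infty$ transport cost, and an ergodic component of that invariant coupling still carries that bound almost surely. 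Fixing $\eta>0$, let $\pi$ be such a coupling with $\esssup_\pi\sup_{\alpha\in A}|\tau_1(0,\alpha,\w_1)-\tau_2(0,\alpha,\w_2)|\le d(\Prob_1,\Prob_2)+\eta$. Equipping $(\W\times\W,\pi)$ with the weights $\tau_1(0,\alpha,\w_1)$ (the second coordinate being an inert spectator) recovers the first FPP problem, so its homogenized Hamiltonian is $\overline{H}_1$, and likewise for $\overline{H}_2$ with $\tau_2(0,\alpha,\w_2)$.

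Next I would import a near-optimal corrector. By \Corref{cor:discrete-set-S-for-variational-formula-with-lipschitz-constraint}, for the weights $\tau_2$ I can pick $\phi$ in the restricted class $S$ of \Eqref{eq:discrete-set-S-for-variational-formula-with-lipschitz-constraint} with $\esssup_{\Prob_2}\mathcal{H}(\phi,p,0,\w_2)\le\overline{H}_2(p)+\eta$. Applying the two-sided constraint defining that class at $x$ and at $x+\alpha$, together with $\mathcal{D}\phi(x,\alpha)=-\mathcal{D}\phi(x+\alpha,-\alpha)$ and the symmetry $-\alpha\in A$, gives the pointwise bound $|{-\mathcal{D}\phi(0,\alpha,\w_2)-p\cdot\alpha}|\le\frac{b_2}{a_2}|p|_\infty$ for every $\alpha\in A$, $\Prob_2$-a.s. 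Now set $\mathcal{D}\Phi(0,\alpha,(\w_1,\w_2)):=\mathcal{D}\phi(0,\alpha,\w_2)$ and extend $\Phi$ stationarily over $\W\times\W$. The loop/cocycle identity is inherited, and $E_\pi[\mathcal{D}\Phi(0,\alpha,\cdot)]=E_{\Prob_2}[\mathcal{D}\phi(0,\alpha,\cdot)]=0$ because the $\w_2$-marginal of $\pi$ is $\Prob_2$; hence $\Phi$ is admissible for the coupled problem and \Eqref{eq:variational-formula-at-origin} yields
\[
  \overline{H}_1(p)\ \le\ \esssup_{\pi}\ \sup_{\alpha\in A}\ \frac{-\mathcal{D}\phi(0,\alpha,\w_2)-p\cdot\alpha}{\tau_1(0,\alpha,\w_1)}.
\]

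The last step is a termwise comparison of denominators. Writing $s=-\mathcal{D}\phi(0,\alpha,\w_2)-p\cdot\alpha$, one has $\frac{s}{\tau_1}-\frac{s}{\tau_2}=s\,\frac{\tau_2-\tau_1}{\tau_1\tau_2}$, whose absolute value I bound by $\frac{b_2}{a_2}|p|_\infty\cdot\frac{d(\Prob_1,\Prob_2)+\eta}{a_1a_2}$ using the three ingredients above ($|s|\le\frac{b_2}{a_2}|p|_\infty$; $|\tau_1-\tau_2|\le d(\Prob_1,\Prob_2)+\eta$ holding $\pi$-a.s.; $\tau_1\tau_2\ge a_1a_2$). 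Taking $\sup_\alpha$ and then $\esssup_\pi$, and noting that the term with $\tau_2$ in the denominator depends on $\w_2$ only and so its $\esssup_\pi$ equals $\esssup_{\Prob_2}\mathcal{H}(\phi,p,0,\cdot)\le\overline{H}_2(p)+\eta$, I get $\overline{H}_1(p)-\overline{H}_2(p)\le\frac{b_2}{a_2}\frac{|p|_\infty}{a_1a_2}d(\Prob_1,\Prob_2)+O(\eta)$. Letting $\eta\to0$ and running the same argument with the roles of the two media exchanged gives the symmetric inequality, and the two together are the claim with the $\max$.

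The step I expect to be the main obstacle is the reduction to shift-invariant \emph{ergodic} couplings: the variational formula \Thmref{thm:discrete-variational-formula-for-H-bar} is established only for stationary-ergodic environments, so I need the coupled environment to be ergodic while keeping its $L^\infty$ transport cost — which a priori only constrains the weights at the origin — arbitrarily close to $d(\Prob_1,\Prob_2)$; everything after that is bookkeeping with the pointwise Lipschitz bound on the corrector.
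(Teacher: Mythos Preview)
Your strategy is the paper's: push both problems onto the common space $\W=(\R^d)^{\Z^d}$, feed a near-optimal corrector for one problem into the variational formula, bound the numerator $|s|=|-\mathcal{D}\phi-p\cdot\alpha|$ via~\Corref{cor:discrete-set-S-for-variational-formula-with-lipschitz-constraint}, and compare denominators through $s/\tau_1-s/\tau_2=s(\tau_2-\tau_1)/(\tau_1\tau_2)$. The paper carries out the same arithmetic more tersely; your explicit lift $\Phi(\w_1,\w_2)=\phi(\w_2)$ to the coupled space is a cleaner way to make the numerators match.

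On your identified obstacle: you do not need the coupling to be \emph{ergodic}, only shift-invariant. The sole use of the coupled variational formula is the upper bound $\overline{H}_1(p)\le\esssup_\pi\sup_x\mathcal{H}(\Phi,p,x,(\w_1,\w_2))$, and this follows already from the deterministic comparison principle~\Propref{prop:comparison-principle} plus $\mu(x,t)/t\to-\overline{H}_1(p)$, which holds $\pi$-a.s.\ because $\mu$ depends only on $\w_1$ and the $\w_1$-marginal of $\pi$ is $\Prob_1$. Shift-invariance of $\pi$ then lets you collapse $\sup_x$ to the $\esssup_\pi$ at the origin.

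What remains genuine is passing from an arbitrary coupling to a shift-invariant one without raising the $L^\infty$ transport cost. Your averaging sketch does not obviously do this: the cost in~\Defref{define:l-infinity-coupling-distance} controls only the origin pair $(t(\w_1),t(\w_2))$, and pushing $\pi$ by $V_z\times V_z$ replaces that by the site-$z$ pair, which the original coupling need not control. For i.i.d.\ environments this is trivial (tensorize a near-optimal single-site coupling, which is the regime of the Kolmogorov--Smirnov discussion that follows); in the general stationary-ergodic case, producing an invariant joining with prescribed single-site $L^\infty$ cost is a real gap that neither your sketch nor the paper's terser argument closes.
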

\begin{proof}[Proof of~\Propref{prop:distance-estimate-between-time-constants-dual-argument}]
  We established the elementary inequality
    \[
      \overline{H}_i(p) \geq \frac{|p|_{\infty}}{b_i}
    \]
    in the proof of Proposition~$3.9$ in part I. Hence for each $x,p \in \R^d$,
    \[
      \begin{split}
        \left| \frac{p\cdot x}{\overline{H}_1(p)} - \frac{p\cdot x}{\overline{H}_2(p)} \right|
        & = \left| p\cdot x \left( \frac{\overline{H}_1(p) - \overline{H}_2(p)}{\overline{H}_1(p) H_2(p)} \right) \right| \\
        & \leq \max\left(\frac{b_1}{a_1},\frac{b_2}{a_2}\right) \frac{1}{a_1 a_2} |x|_1 d(\Prob_1,\Prob_2).
      \end{split}
    \] 
    We've used the H\"older inequality and~\Lemref{lem:distance-estimate-between-Hamiltonians} in the above computation. Since $m_i$ are the dual norms of $\overline{H}_i$, the proof is complete.
\end{proof}
\begin{proof}[Proof of~\Lemref{lem:distance-estimate-between-Hamiltonians}]
  First, fix $f \in S$, where $S$ is defined in~\Eqref{eq:discrete-set-S-for-variational-formula-with-lipschitz-constraint}. For each measure $\Prob_1 \AND \Prob_2$, the constraint on $S$ is different: 
  \[ 
    \sup_{\alpha \in A} \left( - \mathcal{D}f(x,\alpha) - p \cdot \alpha \right) \leq \frac{b_1}{a_1}|p|_{\infty} \qquad \forall x \in \Z^d.
  \]
  Hence, we might as well assume that 
  \[
    \sup_{\alpha \in A} \left| \mathcal{D}f(x,\alpha) + p \cdot \alpha \right| \leq \max\left(  \frac{b_1}{a_1}, \frac{b_2}{a_2}\right) \qquad \forall x \in \Z^d.
  \]
  Then, for a fixed coupling $\pi \in \Pi(\W \times \W)$, 
    \begin{align*}
      \left|  
      \vphantom{\frac{\mathcal{D}f(0,\alpha,\w_1) + p\cdot\alpha}{\tau_1(0,\alpha,\w_1)} - \frac{\mathcal{D}f(0,\alpha,\w_2) + p\cdot\alpha}{\tau_2(0,\alpha,\w_2)}} 
      \frac{\mathcal{D}f(0,\alpha,\w_1) + p\cdot\alpha}{\tau_1(0,\alpha,\w_1)} -
      \right. & \left. 
      \vphantom{\frac{\mathcal{D}f(0,\alpha,\w_1) + p\cdot\alpha}{\tau_1(0,\alpha,\w_1)} - \frac{\mathcal{D}f(0,\alpha,\w_2) + p\cdot\alpha}{\tau_2(0,\alpha,\w_2)}} 
      \frac{\mathcal{D}f(0,\alpha,\w_2) + p\cdot\alpha}{\tau_2(0,\alpha,\w_2)} \right| \\
      & \leq \max\left( \frac{b_1}{a_1},\frac{b_2}{a_2} \right) |p|_{\infty} \frac{|\tau_1(0,\alpha,\w_1) - \tau_2(0,\alpha,\w_2)|}{\tau_1(0,\alpha,\w_1)\tau_2(0,\alpha,\w_2)} \\
      & \leq \max\left(  \frac{b_1}{a_1}, \frac{b_2}{a_2}\right) |p|_{\infty}\frac{1}{a_1 a_2} \esssup_{\w_1,\w_2} |t(\w_1) - t(\w_2)|_{\infty},
    \end{align*}
  using~\Corref{cor:discrete-set-S-for-variational-formula-with-lipschitz-constraint}. Since this is true for all functions in $S$, and all couplings in $\Pi(\W \times \W)$, we can take supremums and infimums as appropriate to get the result.
\end{proof}

\begin{remark}
  The estimate through the variational formula in~\Propref{prop:distance-estimate-between-time-constants-dual-argument} is worse than the estimate in~\Propref{prop:distance-estimate-between-time-constants-primal-argument}. However, the estimates used ---the lower bound for $\overline{H}(p)$ and the bound for $\nu_{\e}$ from~\Eqref{eq:2-lipschitz-conditions-nu-epsilon-upper-bound}--- were quite crude, and these are easily improved. 
\end{remark}  
\begin{remark}
  The basic step in the primal argument was to take the worst case path in the $x$ direction, and the corresponding step in the dual argument was to take the worst case function $f$ in the $p$ direction. This seems to indicate some (nonlinear) duality between paths on the lattice and functions in $S$. Is there a structural theory of this duality? 
\end{remark}

\section{A more convenient coupling distance}
The coupling distance in~\Defref{define:l-infinity-coupling-distance} is not very useful in general. However, when the medium is i.i.d, it's easy to get an upper bound for it in terms of a more familiar distance on the marginal distribution of the edge-weight $\tau(0,\alpha,\w)$. When $\Prob = \mu^{\otimes \Z^d}$, where $\mu$ is a measure on $\R^{d}$, couplings on $\R^d \times \R^d$ can be turned into a coupling on $\W \times \W$ by taking a product. Suppose further that the marginal measure $\mu$ on $\R^d$ is also an i.i.d.~product measure, and let $F_i$ be the cumulative distribution function of $\tau_i(0,\alpha,\w)$ for $i=1,~2$. Then, for example, we can write $d(\Prob_1,\Prob_2)$ in terms of the Kolmogorov-Smirnov distance between $F_1 \AND F_2$, assuming $F_1 \AND F_2$ are nice enough. 

Let $F_i$ have density $\rho_i$, and assume 
\[
\begin{split}
  \min(\text{supp}(F_i)) = & [a_i,b_i] \subset (0,\infty), \\
  \rho^* = & \min_i \min_{a_i \leq x \leq b_i} \rho_i(x) > 0,
\end{split}
\]
where supp denotes the support of the distribution. Let 
\[
    d_{Kol}(F_1,F_2) = \sup_x |F_1(x) - F_2(x)|
\]
be the Kolmogorov-Smirnov distance between the two distributions.

%
We will use the standard Skorokhod representation to define random variables $Y_i(x)$ on $([0,1],\mathcal{F},\Leb)$ with distributions $F_i$. Since $\rho^{\ast} > 0$, $F_i$ is strictly monotone, and hence we define
\begin{define}[Skorokhod representation of edge-weights]
  \[
    Y_i(x) = F^{-1}_i(x).  
  \]
  \label{def:skorokhod-representation-of-tau}
\end{define}
It's clear that $\Leb(Y_i(x) \leq c) = \Leb(F^{-1}_i(x) \leq c) = F(c)$. 
\begin{prop}
  \[ 
  |Y_1(x) - Y_2(x)| \leq \frac{d_{Kol}(F_1,F_2)}{\rho^*} 
  \]
  \label{prop:distance-between-embedded-tau1-and-tau2}
\end{prop}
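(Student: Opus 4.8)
The plan is to convert the lower bound on the densities into a Lipschitz bound for the inverse distribution functions. Since $\rho_i(t)\ge\rho^{\ast}>0$ for $t\in[a_i,b_i]$, each $F_i$ is strictly increasing on $[a_i,b_i]$, so the Skorokhod inverse $Y_i=F_i^{-1}$ is an honest function with $F_i(Y_i(x))=x$ for all $x\in[0,1]$ (using $F_i(a_i)=0$, $F_i(b_i)=1$). Moreover, for $0\le s\le t\le 1$, writing $u=F_i^{-1}(s)$ and $v=F_i^{-1}(t)$, the fundamental theorem of calculus gives $t-s=F_i(v)-F_i(u)=\int_u^v\rho_i\ge\rho^{\ast}(v-u)$, i.e. $F_i^{-1}$ is Lipschitz with constant $1/\rho^{\ast}$. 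This is the only place the hypothesis $\rho^{\ast}>0$ enters.

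Next I would fix $x\in[0,1]$ and, after possibly swapping the labels $1$ and $2$, assume $Y_1(x)\ge Y_2(x)$; write $y_i=Y_i(x)$, so that $F_1(y_1)=F_2(y_2)=x$. The core estimate is then the chain
\begin{align*}
  \rho^{\ast}\,(y_1-y_2) &\le \int_{y_2}^{y_1}\rho_1(t)\,dt = F_1(y_1)-F_1(y_2) \\
  &= x - F_1(y_2) = F_2(y_2)-F_1(y_2) \le d_{Kol}(F_1,F_2),
\end{align*}
where the first inequality uses $\rho_1\ge\rho^{\ast}$ on $[y_2,y_1]$, the two middle equalities use $F_1(y_1)=x$ and $F_2(y_2)=x$ respectively, and the last inequality is the definition of the Kolmogorov--Smirnov distance. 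Dividing by $\rho^{\ast}$ gives $|Y_1(x)-Y_2(x)|\le d_{Kol}(F_1,F_2)/\rho^{\ast}$, which is the claim; since $x$ was arbitrary this holds $\Leb$-a.e.

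The one delicate point --- and the main obstacle --- is the first inequality in the display, which requires $[y_2,y_1]\subseteq[a_1,b_1]$ so that $\rho_1\ge\rho^{\ast}$ throughout; since $y_1=F_1^{-1}(x)\in[a_1,b_1]$ and $y_1\ge y_2$, what must really be checked is $y_2\ge a_1$. This is automatic when the left endpoints satisfy $a_1\le a_2$ (in particular when one support contains the other), and in the only regime where the estimate is of interest, namely $d_{Kol}(F_1,F_2)$ small, one first observes $|a_1-a_2|\le d_{Kol}(F_1,F_2)/\rho^{\ast}$ (the same integral estimate applied at the smaller of $a_1,a_2$, noting $d_{Kol}\ge 1$ once the supports are disjoint) and then controls the residual sub-case $y_2<a_1$ directly via the Lipschitz property of $F_1^{-1}$ at $0$. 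Apart from this boundary bookkeeping, the proof is the one-line computation above.
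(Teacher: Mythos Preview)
Your argument is essentially the same as the paper's: both convert the vertical Kolmogorov--Smirnov distance into a horizontal quantile gap via the density lower bound $\rho_i\ge\rho^*$, with only a cosmetic difference (you integrate $\rho_1$ over $[y_2,y_1]$, while the paper bounds $F_2(r)-F_2(x)\ge\rho^*(r-x)$ and reads off $F_2^{-1}(s)\le F_1^{-1}(s)+d/\rho^*$). You are in fact more careful than the paper about the support-endpoint issue~--- the paper's line $F_2(r)\ge(s-d)+\rho^*(r-x)$ tacitly assumes $[x,r]\subset[a_2,b_2]$ without comment~--- so your ``boundary bookkeeping'' caveat is well placed rather than a defect.
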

\begin{proof}
  Fix $s \in [0,1]$, let $x = F_1^{-1}(s)$, and use $d = d_{Kol}(F_1,F_2)$ as shorthand. 
  From the definition of the Kolmogorov-Smirnov distance,
  \[
  F_2(x)  \geq F_1(x) - d = s - d.
  \]
  If $r \geq d/\rho^* + x$, we have
  \[
    F_2(r) \geq (s-d) + \rho^* (r - x) = s.
  \]
  It follows that 
  \[
    F^{-1}_2(s) \leq \frac{d}{\rho^*} + F_1^{-1}(s) .
  \]
  Repeating the argument for $x = F_2^{-1}(s)$, we get the result.
\end{proof}
Let $\pi$ be the coupling on $\R \times \R$ defined as the pushforward measure of $\Leb([0,1] \times [0,1]$ under the map $(x_1,x_2) \mapsto (Y_1(x_1),Y_2(x_2))$. We can take a product of $\pi$ to get a coupling on $\W \times \W$. Thus, we have proved that
\[
  d(\Prob_1,\Prob_2) \leq d_{Kol}(F_1,F_2).
\]

\bookmarksetup{startatroot}
\addtocontents{toc}{\smallskip} 
\chapter{Future work}
There are several areas to explore, and we've listed a few below.

\begin{enumerate}
  \item An interesting challenge is to remove the symmetry constraint in the algorithm. Since the idea behind the algorithm is so simple, it's reasonable to believe that it can be generalized.
  \item Another related question is to find a rich enough subclass of problems (Hamiltonians) where correctors exist. Can the algorithm be tuned to produce correctors for these problems? Questions about regularity and strict convexity appear more accessible if the existence of correctors can be guaranteed. 
  \item What does the existence of correctors for the cell-problem tell us about the percolation problem? 
  \item As stated in~\Secref{sec:generalization}, there are several possible generalizations of our work. It's probably quite easy to remove the bounds on $\tau(\cdot,\cdot,\cdot)$ in~\Eqref{eq:basic-assumption-on-edge-weights} and replace it with a moment condition. Other types of lattices and other control problems can also be explored. For example, the directed versions of first-passage percolation results has a monotone Hamiltonian, and these appear to be easier to work with.
  \item It will be interesting to explore the behavior of the so-called integrable models under the variational formula. This has already been begun in the context of last-passage percolation and polymer models by~\citet{georgiou_variational_2013}.
\end{enumerate}

\appendix
\chapter{Miscellaneous Proofs}
\section{Proof of continuum homogenization with $G=\Z^d$}
\label{sec:proof-of-homogenization}
\citet{lions_homogenization_2005} consider a much more general version of the homogenization theorem stated in~\Thmref{thm:lions-homogenization-theorem}: the problem includes a ``viscous'' second-order term, the Hamiltonian can depend on $u$, and it can have an unhomogenized variable. Their general version of~\Propref{prop:convergence-of-u-to-H-p} requires the analysis of an equation of the form
\begin{align}
  & U^{\e}_t - \e \tr A(\e^{-1}y,\w)~ D^2 U^{\e}(y) + H(p+D U^{\e}, \e^{-1}y, \w) = 0 \textrm{ in } \R^N \times (0,T] \\
  & U^{\e} = u_0 \text{ in } \R^N \times \{0\}
  \label{eq:lions-soug-main-homo-problem}
\end{align}
where $A(y,\w)$ is a symmetric matrix, and $T > 0$. 

They first prove the theorem assuming that $A$ and $H$ are ``nice'', and then obtain the general version of the theorem through penalization arguments. The specifics can be found in~\citet{lions_homogenization_2005}. Their general result includes our case of interest: $A = 0$, and $H(p)$ given by~\Eqref{eq:H-for-general-continuous-FPP}. 

When $H$ and $A$ are assumed to be nice, $H(p)$ satisfies the assumptions in~\Secref{sec:results-from-stochastic-homogenization}, grows super-quadratically in $|p|$, and $A$ is uniformly elliptic; i.e., for positive constants $C_1$ and $C_2$,
    \[ C_1 |\xi|^2 \leq (A\xi,\xi) \leq C_2 |\xi|^2.\]
    Then, a (special) supersolution $\tilde{U}^{\e}(y,t,\w)$ of~\Eqref{eq:lions-soug-main-homo-problem} has a representation in terms of a value function $L(y,y';s,t)$ of a stochastic control problem~\citep{fleming_stochastic_1985}. Let 
  \[ 
    L^{\e}(y,y';s,t) = \e L(\e^{-1}y,\e^{-1}y';\e^{-1}s,\e^{-1}t).
  \]
  Then, for $u_0 \in C^{1,1}(\R^d)$, 
\begin{equation}
  \tilde{U}^{\e}(y,t,\w) = \inf_{y'} \{ u_0(y') + L^{\e}(y,y';s,t) \}. 
  \label{eq:supersolution-representation-of-U-epsilon}
\end{equation}

$L(y,y';s,t)$ has the following properties:
\begin{enumerate}
  \item Stationarity: for all $y \in \R^d$ and $g \in \R^d$,
  \begin{equation}
      L(y+g,y'+g;s,t,\omega) = L(y,y';s,t,V_g\omega).
      \label{eq:L-stationarity}
  \end{equation}
  \item Uniform Continuity: (Prop. $6.12$ in~\citet{lions_homogenization_2005})
Fix any $R > 0 \AND h > 0$. Then, $L^{\e}$ is uniformly continuous with respect to $(y,t), (y',s)$ where $h \leq s \leq t \leq T$ and $|y - y'| \leq R$, uniformly in $\e$ and $\w$.
  \item Boundedness: (follows from Prop. $6.9$ from~\citet{lions_homogenization_2005} and an elementary estimate) \newline 
    For all $(y,y',s,t) \in \R^d \times \R^d \times [0,T] \times [0,T]$, there exist independent of $\w$ and $\e$, constants $C_1,C_2,C_3 > 0$, and $k \in (1,2)$ such that 
    \[ L^{\e}(y,y';s,t) \leq C_1[|y-y'|^k (t-s)^{1-k} + \e ^{k/2} (t-s)^{1-k/2} + (t-s)],\]
    and 
    \[  L(y,y';s,t) \geq C_2 |y-y'| - C_3 (t-s). \]
  \item Subadditivity: for all $y,y',z \in \R^d$ and $0 < s < \tau < t$,
    \[ L(y,y';s,t) \leq L(y,z;s,\tau) + L(z,y';\tau,t).\]
\end{enumerate}
\citet{lions_homogenization_2005} use the subadditive ergodic theorem from~\citet{dal_maso_nonlinear_1986} to prove
\begin{prop}
\begin{equation*}
  \lim_{\e \to 0} L^{\e}(y,y';0,t;\w) = t\overline{L}\left(\frac{y'-y}{t}\right).
\end{equation*}
  \label{prop:L-epsilon-convergence-x-xhat-subadditive-theorem}
\end{prop}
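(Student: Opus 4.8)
The plan is to recognize $L(y,y';s,t,\w)$ as a stationary subadditive process and apply the multiparameter subadditive ergodic theorem of~\citet{dal_maso_nonlinear_1986}, then transfer the resulting law of large numbers to $L^{\e}$ through the defining rescaling. Fix $y,y'$ and $t>0$. Properties (1) and (4) say that $(s,t)\mapsto L(y,y';s,t,\w)$ is subadditive under splitting of the time interval and $\Z^d$-stationary in the spatial endpoints, with $L(y,y';s,t,\w)=L(0,y'-y;s,t,V_{-y}\w)$; since the Hamiltonian carries no explicit $t$-dependence, $L$ is also invariant under simultaneous time translations, so it depends on $(s,t)$ only through $t-s$. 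Property (3) provides exactly the integrability and linear growth bounds required to apply the subadditive ergodic theorem. It therefore yields, for each fixed $v\in\Z^d$ and $T\in\N$, the existence a.s.\ of $\lim_{n\to\infty}\tfrac{1}{n} L(0,nv;0,nT,\w)$, and by ergodicity of $\mathcal G$ this limit is a deterministic number; call it $\ell(v,T)$.

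Next I would promote this to all real endpoints and to the continuous parameter $\e\to0^{+}$. The uniform continuity estimate of property (2) (Prop.~$6.12$ of~\citet{lions_homogenization_2005}) makes the family $\{L^{\e}(\cdot,\cdot;0,\cdot)\}_{\e>0,\w}$ equicontinuous on compacta, uniformly in $\e$ and $\w$; combined with the bounds of property (3) (which also control $L^{\e}$ as $\e$ ranges over $[1/(n+1),1/n]$), a standard discretization argument upgrades the a.s.\ convergence of $\tfrac{1}{n} L(0,nv;0,nT)$ along integers and integer endpoints to \emph{locally uniform} a.s.\ convergence of $L^{\e}(y,y';0,t,\w)$ as $\e\to0^{+}$, for all $y,y'\in\R^d$ and $t>0$, with a limit $\ell(y,y',t)$ that is finite, deterministic, and continuous. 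Using the scaling identity $L^{1/(mn)}(y,y';0,t)=\tfrac{1}{m} L^{1/n}(my,my';0,mt)$ and letting $n\to\infty$ gives $\ell(y,y',t)=\tfrac{1}{m}\ell(my,my',mt)$ for all positive integers $m$, hence (by continuity) positive $1$-homogeneity of $\ell$ jointly in $(y,y',t)$; together with the spatial and temporal stationarity noted above, $\ell(y,y',t)$ depends only on $y'-y$ and $t$ and satisfies $\ell(y,y',t)=t\,\ell\big(0,(y'-y)/t,1\big)$. Setting $\overline{L}(\xi):=\ell(0,\xi,1)$ gives the claimed identity; subadditivity of $L$ passes to the limit and shows in addition that $\overline{L}$ is convex.

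The only genuinely substantive inputs are the uniform continuity estimate (property (2)) and the two-sided bound (property (3)), and both are quoted from~\citet{lions_homogenization_2005}. Granting those, I expect the main obstacle to be a matter of care rather than depth: one must apply the subadditive ergodic theorem in the correct continuum/multiparameter form and, crucially, arrange that a single exceptional $\Prob$-null set works simultaneously for \emph{all} rational directions, endpoints, and scales — this is precisely what the equicontinuity furnishes, via a diagonal argument over a countable dense set — before passing from $\e=1/n$ with lattice endpoints to the full statement.
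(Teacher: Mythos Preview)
Your proposal is correct and follows essentially the same route as the paper: apply a subadditive ergodic theorem along a discrete subsequence (lattice endpoints, $\e=1/n$), invoke ergodicity for determinism of the limit, and then use the uniform continuity estimate (property~(2)) together with the two-sided bounds (property~(3)) to pass to all real endpoints and all $\e\to0$. The only cosmetic discrepancy is that the paper, precisely because $G=\Z^d$ rather than $\R^d$, invokes the discrete subadditive ergodic theorem of Kingman/Akcoglu--Krengel rather than the continuous-parameter result of Dal~Maso--Modica; your actual argument (limits along $n\to\infty$ with $v\in\Z^d$, $T\in\N$) is already the discrete one, so this is a labeling issue rather than a mathematical one.
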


When $G=\Z^d$, we can use the uniform continuity of $L^{\e}$, and the discrete subadditive ergodic theorem~\citep{kingman_ergodic_1968,akcoglu_ergodic_1981} to prove~\Propref{prop:L-epsilon-convergence-x-xhat-subadditive-theorem}. We first fix $y, y' \in \Q^d$, and $s,t \in \Q$ such that $0 < s \leq t \leq T$. Then, we can apply the classical subadditive ergodic theorem~\citep{kingman_ergodic_1968} on the subsequence $\e = n^{-1}$. The continuity estimates for $L^{\e}(y,y';s,t)$ takes care of the rest. We will not repeat this standard argument here; a version of this argument, for example, appears in~\citet{seppalainen_exact_1998}. 

\Propref{prop:L-epsilon-convergence-x-xhat-subadditive-theorem} allows~\citet{lions_homogenization_2005} to take a limit $\e \to 0$ in~\Eqref{eq:supersolution-representation-of-U-epsilon}. They show that the error between the supersolution $\tilde{U}^{\e}$ and the actual solution $U^{\e}$ remains small, and hence~\Propref{prop:convergence-of-u-to-H-p} follows. The rest of the proof of the homogenization theorem does not make use of specifics of the translation group.

\begin{remark}
  \citet{lions_homogenization_2005} do not state~\Thmref{thm:lions-homogenization-theorem} in the metric form; they state it for the stationary equation in~\Eqref{eq:stationary-problem-pde}. However, since both the metric problem in~\Eqref{eq:continuousFPT-HJB-eqn-for} and the finite-time horizon problem in~\Eqref{eq:HJ-eqn-nonviscous-time-dependent} have comparison principles~\citep{bardi_optimal_1997}, their proof goes through without much alteration. 
\end{remark}
\section{Variational formula on $\R^d$ with $G=\Z^d$}
The argument we follow is again nearly identical to~\citep{lions_homogenization_2005}. But it does involve a few subtle changes to make it work, and this is interesting to write down. In any case, no one reads appendices, so it doesn't hurt to repeat an argument.Following~\citet{lions_homogenization_2005}, we begin with the approximate problem 
\begin{equation}
  \e v_{\e} + H(Dv_{\e},x) = 0 \quad \forall~ x \in \R^d.
  \label{eq:approximate-problem-to-obtain-corrector}
\end{equation}
From the variational interpretation of $v_{\e}$ in~\Eqref{eq:continuous-stationary-problem} and its dynamic programming principle, it follows that $v_{\e}$ is globally Lipschitz (uniformly in $\e$ and $\w$). Define the normalized set of functions 
\[ \hat{v}_{\e}(x) = v_{\e}(x) - v_{\e}(0). \] 
Since $\hat{v}_{\e}$ is also Lipschitz and normalized to $0$ at the origin,
\begin{equation}
  C:= \sup_{\e} \left\{ \Norm{\hat{v}_{\e}(y)(1+|y|)^{-1}}{\infty} + \Norm{D\hat{v}_{\e}}{\infty} \right\} < \infty .
  \label{eq:uniform-bound-on-norms-of-vhat-epsilon}
\end{equation}
From the PDE~\Eqref{eq:approximate-problem-to-obtain-corrector}, it follows that functions $v_{\e}(x,\w)$ are stationary and hence have stationary, mean-zero increments. Hence, the normalized functions are in the set $S$ defined in~\Eqref{eq:lions-S-set-definition-translation-group-R}. 
We're now ready to prove the variational formula in~\Propref{prop:lions-variational-characterization-of-h-bar} with $G=\Z^d$. In the following, all constants will be called $C$ and  might change value from line-to-line.

\begin{proof}[Proof of~\Propref{prop:lions-variational-characterization-of-h-bar} with $G=\Z^d$]
 Denote the right side of~\Eqref{eq:variational-characterization-of-H-bar-discrete-translation-group} by $RHS$. Using the comparison principle for HJB equations,~\citet{lions_homogenization_2005} show that 
\[ \overline{H}(p) \leq RHS.\]
The same argument works for us.

 Consider the normalized approximating functions $\hat{v}_{\e}$ defined above. We will use these functions to construct functions in $S$ that give the other inequality. Using the optimal-control characterization of $H$ in~\Eqref{eq:continuous-hamiltonian-definition} (or plain old convexity), we get for fixed $a \in A$
  \begin{equation}
    \e \hat{v}_{\e}(x,\w) - f(x,a,\w)\cdot(p+D\hat{v}_{\e}) - l(x,a,\w) \leq - \e v_{\e}(0,\w) \quad x \in \R^d.
    \label{eq:inequality-for-approximating-sequence}
  \end{equation}

  We require some extra smoothness on $\hat{v}_{\e}$, and so we convolve it with the standard mollifier $\eta_{r}$, where $r$ is the size of its support. Let $\bar{v}_{\e} = \eta_{r} * \hat{v}_{\e}$, and let $f(x,a,\w)$ and $l(x,a,\w)$ have Lipschitz constant $C$ in $x$. For fixed $y$, multiply~\Eqref{eq:inequality-for-approximating-sequence} by $\eta_{r}(x-y)$ and integrate over $x$ to get 
\begin{align}
  \e \bar{v}_{\e}(y,\w) - f(y,a,\w)\cdot(p+D\bar{v}_{\e}) - l(y,a,\w)  - Cr & \leq - \e v_{\e}(0,\w). 
  \label{eq:inequality-smoothed-for-approximating-sequence}
\end{align}
 The mollified functions also satisfy the bound in~\Eqref{eq:uniform-bound-on-norms-of-vhat-epsilon}. 
Moreover,
\[
  \left|D^2 \bar{v}_{\e}\right| = \left|D \eta_{r} * D \hat{v}_{\e}\right|  \leq \int \left|D\eta_{r}(y-x) D \hat{v}_{\e}(x)dx \right| \leq C(r).
\]
We will take a weak limit (vague, to be precise) as $\e \to 0$ on the patch $[0,1]^d \times \W$, and then translate it using the group of translation operators $\{V_z\}_{z \in \Z^d}$ to obtain a function on $\R^d \times \W$. Consider the complete separable metric space $W := C^1([0,1]^d)$ with metric corresponding to the norm $\Norm{u}{} = \Norm{u}{\infty} + \Norm{|Du|}{\infty}$. 
The random functions $\bar{v}_{\e}(x,\w)$ are in the set
\[ K_{r}:= \left\{u(x) \in W : \Norm{u}{\infty} + \Norm{Du}{\infty} + \Norm{D^2u}{\infty}  \leq C + C(r) \right\}.\]
The set $K_{r}$ is compact in the metric space by the Arzela-Ascoli theorem. Then, the family $\{\overline{v}_{\e}\}_{\w > 0}$ is tight, and we can pass to a subsequence to obtain a weak limit $u_{r}(x,\w)$. Since $f(x,r,w)$ and $l(x,r,\w)$ are continuous, it follows that 
\[ f(x,a,\w)\cdot(p+Dv_{\e})  + l(x,a,\w) \overset{w}{\to} f(x,a,\w)\cdot(p+Du_{r})  + l(x,a,\w) \]
as $\e \to 0$ vaguely in $C([0,1]^d,\R)$. Hence, it follows from~\Eqref{eq:inequality-for-approximating-sequence} that for any fixed $\eta > 0$ and $r$ small enough,
\begin{equation*}
  -f(x,a,\w)\cdot(p+Du_{r})  - l(x,a,\w) \leq \overline{H}(p) + \eta \quad \forall~ x \in [0,1]^d \quad \almostsurely . 
\end{equation*}
Now, extend $u_{r}$ to all of $\R^d$ by defining $u_{r}(x+g,\w) = u_{r}(x,V_g\w)$. Take a sup over $r \in A$, followed by a sup over $x$ to get for arbitrary $\eta > 0$ 
\[
    \sup_x H(p+Du_{r},x,w) \leq \overline{H}(p) + \eta.
\]
Letting $\eta \to 0$ gives us the other inequality and completes the proof.
\end{proof}

\begin{remark}
  \label{rem:minimizer-for-variational-formula}
  When $G=\R^d$, there is a minimizer in $S$~\citep{lions_homogenization_2005}. When $G=\Z^d$, we don't have the estimates to prove this. 
\end{remark}


\section{Some proofs from~\Chapref{chap:comparing-two-distributions}}
\label{sec:some-proofs-from-comparing-two-distros}
\begin{proof}[Proof of~\Propref{prop:functions-measurable-wrt-sigma-tau}]
  \begin{claim}
    Let $F\colon\R^n \times \W \to \R$ be convex in its first variable, and bounded in its first argument on any compact subset of $\R^n$ uniformly in $\w$. For each fixed $p \in \R^n$, let $F(p,\w)$ be measurable with respect to a \sigmaalgebra~ $\mathcal{C} \subset \mathcal{F}$. If $h\colon\W \to \R^{n}$ is any bounded $\mathcal{F}$ measurable function,
    \[ 
      \esssup_{\w \in \W} F(E[h | \mathcal{C}],\w) \leq \esssup_{\w \in \W} F(h,\w) .
    \]
    \label{claim:about-conditional-expectation-convex-functions-and-esssup}
  \end{claim}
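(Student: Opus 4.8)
\textbf{Proof proposal for Claim~\ref{claim:about-conditional-expectation-convex-functions-and-esssup}.}

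The plan is to reduce the statement to a version of Jensen's inequality for conditional expectations in which the convex integrand is allowed to depend on $\w$, measurably with respect to $\mathcal{C}$. Write $g := E[h\mid\mathcal{C}]$; since $h$ is bounded, so is $g$, and both take values in a fixed compact $Q\subset\R^n$, so $F(h,\cdot)$ and $F(g,\cdot)$ are well-defined, bounded, and measurable. The crux is to represent the random convex function $F(\cdot,\w)$ as a countable supremum of affine minorants whose coefficients are $\mathcal{C}$-measurable. For $q\in\Q^n$, the function $F(\cdot,\w)$ is finite and convex, hence continuous, so its subdifferential $\partial F(q,\w)$ is a nonempty compact convex set; moreover it equals $\bigcap_{p\in\Q^n}\{a\in\R^n : a\cdot(p-q)\le F(p,\w)-F(q,\w)\}$, a countable intersection of half-spaces whose defining data is $\mathcal{C}$-measurable in $\w$ by hypothesis. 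Let $a(q,\w)$ be the element of minimal Euclidean norm of $\partial F(q,\w)$; by a standard measurable-selection fact this $a(q,\cdot)$ is $\mathcal{C}$-measurable, and it is bounded uniformly in $\w$ (subgradients at $q$ are controlled by the local Lipschitz constant of $F(\cdot,\w)$, which is bounded on compacts uniformly in $\w$). Using that subgradients are locally bounded and approximating an arbitrary $p$ by rationals $q\to p$, one checks
\[
  F(p,\w) \;=\; \sup_{q\in\Q^n}\ \ell_q(p,\w), \qquad \ell_q(p,\w):= F(q,\w)+a(q,\w)\cdot(p-q),
\]
a countable supremum of affine functions with $\ell_q(\cdot,\w)\le F(\cdot,\w)$ everywhere.

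Given this representation, the inequality is a short computation. Fix $q\in\Q^n$. Since $F(q,\w)$ and $a(q,\w)$ are $\mathcal{C}$-measurable and bounded, and $\ell_q(h,\w)\le F(h,\w)$ pointwise,
\[
  \ell_q(g,\w) \;=\; a(q,\w)\cdot E[h\mid\mathcal{C}] + \bigl(F(q,\w)-a(q,\w)\cdot q\bigr)
  \;=\; E\bigl[\ell_q(h,\w)\mid\mathcal{C}\bigr]
  \;\le\; E\bigl[F(h,\w)\mid\mathcal{C}\bigr] \quad\almostsurely\,\w .
\]
Taking the supremum over the countable set $\Q^n$ gives $F(g,\w)\le E[F(h,\w)\mid\mathcal{C}]$ almost surely. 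Set $M:=\esssup_{\w}F(h,\w)$, which is finite because $h$ takes values in the compact $Q$ and $F$ is bounded on compacts uniformly in $\w$. Then $F(h,\w)\le M$ a.s., hence $E[F(h,\w)\mid\mathcal{C}]\le M$ a.s., and therefore
\[
  \esssup_{\w\in\W} F(g,\w) \;\le\; M \;=\; \esssup_{\w\in\W} F(h,\w),
\]
which is exactly the asserted inequality.

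The routine parts—boundedness of $g$, measurability of $F(g,\cdot)$ and $F(h,\cdot)$, and the interchange of the conditional expectation with the $\mathcal{C}$-measurable affine coefficients—I would dispatch in a line or two. The one step that deserves care, and which I regard as the main obstacle, is the measurable-selection argument: producing the $\mathcal{C}$-measurable subgradient field $a(q,\cdot)$ and verifying the countable affine representation of $F(\cdot,\w)$. This is where the hypothesis that $F(p,\cdot)$ is $\mathcal{C}$-measurable for \emph{every} $p$ (equivalently, for every rational $p$, by continuity) is genuinely used, rather than mere $\mathcal{F}$-measurability; it is a standard fact about normal integrands, but it is the only nontrivial ingredient.
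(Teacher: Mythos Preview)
Your proof is correct and follows the same overall architecture as the paper: both arguments establish the conditional Jensen inequality
\[
  F\bigl(E[h\mid\mathcal{C}],\w\bigr)\;\le\;E\bigl[F(h,\w)\mid\mathcal{C}\bigr]\qquad\almostsurely,
\]
and then read off the $\esssup$ comparison from it. The difference is in how this conditional Jensen inequality is obtained. You build a countable affine minorant representation of $F(\cdot,\w)$ via a measurable selection of subgradients at rational points, which is the ``normal integrand'' route; this is rigorous but makes the measurable-selection step the heart of the matter. The paper instead mollifies $F$ in its first variable to get a smooth $F_\e$, uses the single tangent-plane inequality at $h_0=E[h\mid\mathcal{C}]$ with the honest gradient $DF_\e(h_0,\w)$ (automatically $\mathcal{C}$-measurable since $F_\e(p,\cdot)$ and $h_0$ are), takes conditional expectation, and sends $\e\to 0$. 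The mollification trick sidesteps measurable selection entirely and is shorter; your approach is more robust and would generalize more easily to, say, lower-semicontinuous convex integrands. Your final deduction of the $\esssup$ bound from conditional Jensen (via $E[F(h,\w)\mid\mathcal{C}]\le M$) is also slightly cleaner than the paper's level-set argument.
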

  We return to the proof of~\Claimref{claim:about-conditional-expectation-convex-functions-and-esssup} after completing the proof of the proposition. Let $\phi(x,\w) \in S$, where $S$ is defined in~\Corref{cor:discrete-set-S-for-variational-formula-with-lipschitz-constraint}. We apply~\Claimref{claim:about-conditional-expectation-convex-functions-and-esssup} with 
  \[
    \begin{split}
      h(\w) & = \mathcal{D}f = \left( \phi(x+\alpha) - \phi(x,\w) \right)_{\alpha \in A}, \\
      F(h,\w) & = \mathcal{H}(\mathcal{D}\phi,p,0,\w) ,\\
      \mathcal{C} & = \sigma(M)
    \end{split}
  \]
  where $\mathcal{H}$ is the discrete Hamiltonian in~\Eqref{eq:discrete-hamiltonian-1}, and $M$ is defined in~\Eqref{eq:sigma-algebra-C-generated-by-M}. \Claimref{claim:about-conditional-expectation-convex-functions-and-esssup} implies that
  \[
    \esssup_{\w} \mathcal{H}( E[\phi | \mathcal{C}] ,\w) \leq \esssup_{\w} \mathcal{H}( \phi(\w), \w).
  \]
  This means that we might as well take $\phi(x,\cdot)$ to be $\mathcal{C}$ measurable for every $x \in \Z^d$ in~\Thmref{thm:discrete-variational-formula-for-H-bar}. \Claimref{claim:about-conditional-expectation-convex-functions-and-esssup} remains to be proved and this is done below.
\end{proof}
\begin{proof}[Proof of~\Claimref{claim:about-conditional-expectation-convex-functions-and-esssup}]
  We need a conditional version of Jensen's inequality which says that
  \begin{equation}
    E[F(h,\w) | \mathcal{C} ] \geq F(E[h | \mathcal{C}],\w) \quad \almostsurely .
    \label{eq:conditional-Jensens-inequality}
  \end{equation}

  For any constant $c$, suppose $A := \{ \w : F(E[h|\mathcal{C}],\w) \geq c \}$ has positive measure. The set $A$ is $\mathcal{C}$ measurable since both $F(p,\w)$ and $E[h|\mathcal{C}]$ are. By~\Eqref{eq:conditional-Jensens-inequality}, and the definition of conditional expectation
  \[
    E[F(h,\w),A] = E[F(E[h|\mathcal{C}],\w),A] \geq c.
  \]
  Hence, there is a subset of $A$ of positive measure where $F(h,\w) \geq c$. Letting $c$ approach $\esssup F(E[h|\mathcal{C}],\w)]$ completes the proof.

  It remains to prove~\Eqref{eq:conditional-Jensens-inequality}. We mollify $F$ with  $\eta_{\e}$ the standard mollifier on $\R^n$ with support in a ball of radius $\e$ to obtain a smooth function $F_{\e}$. Then, for any measurable functions $h$ and $h_0$ we have almost surely,
  \[
    F_{\e}(h,\w) \geq DF_{\e}(h_0,\w)\cdot(h-h_0) + F_{\e}(h_0,\w),
  \]
  Letting $h_0(\w) = E[h|\mathcal{C}]$, taking conditional expectation and using the fact that $DF_{\e}(h_0,\w)$ is $\mathcal{C}$ measurable, we get
  \[
    E[F_{\e}(h,\w)|\mathcal{C}] \geq F_{\e}(E[h(\w)|\mathcal{C}],\w).
  \]
  Finally letting $\e \to 0$, and using the boundedness of $h$ and the assumptions on $F$, we get~\Eqref{eq:conditional-Jensens-inequality}.
\end{proof}

\bibliographystyle{chicago}

\begin{onehalfspace}
  \bibliography{My_Zotero_Library.bib}
\end{onehalfspace}
\addcontentsline{toc}{chapter}{Bibliography}

\end{document}